\newtheorem{thm}{Theorem}[section]
\newtheorem{cor}[thm]{Corollary}
\newtheorem{lem}[thm]{Lemma}
\newtheorem{pro}[thm]{Proposition}
\theoremstyle{definition}
\newtheorem{rem}[thm]{Remark}
\newtheorem{exa}[thm]{Example}
\newcommand{\Gal}{\mathrm{Gal}}
\newcommand{\Hom}[0]{\mathrm{Hom}}
\newcommand{\End}[0]{\mathrm{End}}
\newcommand{\ns}[0]{\mathrm{ns}}
\newcommand{\Sel}[0]{\mathrm{Sel}}
\newcommand{\rk}[0]{\mathrm{rk}}
\newcommand{\coker}[0]{\mathrm{coker\hspace{.5mm}}}
\newcommand{\Tr}[0]{\mathrm{Tr}}
\newcommand{\ord}[0]{\mathrm{ord}}
\newcommand{\res}[0]{\mathrm{res}}
\newcommand{\Res}[3]{\mathrm{Res}^{#1}_{#2}#3}
\newcommand{\cores}[0]{\mathrm{cor}}
\DeclareSymbolFont{cyrletters}{OT2}{wncyr}{m}{n}
\DeclareMathSymbol{\Sha}{\mathalpha}{cyrletters}{"58}
\begin{document}

\pagestyle{plain}
\setcounter{page}{1}

\title{Selmer groups of elliptic curves in degree $p$ extensions}
\author{Julio Brau}

\begin{abstract}
\noindent We study the growth of the Galois invariants of the $p$-Selmer group of an elliptic curve in a degree $p$ Galois extension. We show that this growth is determined by certain local cohomology groups and determine necessary and sufficient conditions for these groups to be trivial. Under certain hypotheses this allows us to give necessary and sufficient conditions for there to be growth in the full $p$-Selmer group in a degree $p$ Galois extension.
\end{abstract}

\maketitle

\section{Introduction}

Let $K$ be a number field and $E/K$ an elliptic curve defined over $K$.  For a positive integer $n$, the $n$-Selmer group of $E$ over $K$ is defined by
$$
\Sel_n(E/K)=\ker\Big(H^1(K,E[n])\longrightarrow \prod_v H^1(K_v,E(\overline{K_v}))\Big)
$$
where $v$ runs over all places of $K$, and $E[n]$ denotes $E(\overline{K})[n]$. For $p$ an odd prime, let $L/K$ be a Galois extension of degree $p$ with Galois group $G$. Then there is a natural action of $G$ on the $p$-Selmer group of $E/L$, denoted by $\Sel_p(E/L)$. In this paper we discuss the size of $\Sel_p(E/L)^G$. Growth of the $p$-Selmer group in Galois extensions has been studied by several authors, often with the aim of obtaining unboundedness results for the Tate-Shafarevich group (see \cite{Bartel}, \cite{Bartel2}, \cite{Cesna}, \cite{Kloost}, \cite{KloostSchaef}). Indeed, the unboundedness of $\dim_{\mathbb{F}_p}\Sha(E/\mathbb{Q})[p]$ has been shown for primes $p\leqslant 7$ or $p=13$ (see \cite{CasselsSelmer}, \cite{Fisher}, \cite{Kramer} and \cite{Matsuno2}).  In \cite{MatsSha}, Matsuno showed that as $E$ varies over elliptic curves over $\mathbb{Q}$, the $\mathbb{F}_p$-dimension of the Tate-Shafarevich group, and in particular the $p$-Selmer group of $E$ over a fixed cyclic degree $p$ extension of $\mathbb{Q}$ can be arbitrarily large.

In this paper we will primarily be concerned with studying the growth of the $p$-Selmer group of an elliptic curve in a degree $p$ Galois extension, and in particular determining the exact $\mathbb{F}_p$-dimension of the Galois invariants of the $p$-Selmer group. As an example in \cite{TD}, Tim Dokchitser analyzes the growth of the $3$-Selmer group of $X_1(11)$ in cubic extensions of the form $K(\sqrt[3]{m})$, where $K=\mathbb{Q}(\zeta_3)$. He shows that the $3$-Selmer group grows if and only if $11|m$ or there exists a prime $v$ of $K$ such that $v|m$ and $\tilde{E}(k_v)[3]\neq 0$. This is done by applying a formula of Hachimori and Matsuno for the $\lambda$-invariant in $p$-power Galois extensions (Theorem 3.1 of \cite{HM} and Corollaries 3.20, 3.24 of \cite{DDIwa}). If we let $K=\mathbb{Q}(\zeta_p)$, this same method can be applied to give necessary and sufficient conditions for having growth of the $p$-Selmer group in extensions of the form $K(\sqrt[p]{m})$ for elliptic curves with ordinary reduction at primes above $p$ satisfying certain additional conditions, among them having trivial $p$-Selmer group over $K$ and trivial cyclotomic Euler characteristic.

As a corollary of our main result, we give necessary and sufficient conditions for having growth of the $p$-Selmer group in the same setting as above, however without any assumption on the Euler characteristic and without requiring ordinary reduction above $p$. Indeed, our results are mainly concerned with computing the exact $\mathbb{F}_p$-dimension of $\Sel_p(E/L)^G$ as best as we can, as opposed to questions of unboundedness. This is illustrated by the following simpler version of our main result, for which we establish the following notation.

Let $E/\mathbb{Q}$ be an elliptic curve over the rationals with $j$-invariant $j:=j(E/\mathbb{Q})$. Let $\ell$ be a prime at which $E$ has semistable reduction and $m$ a positive integer. Let $s\geqslant 0$ be such that $\ell^s \| m$, that is, such that $m=\ell^s d$ with $d$ coprime to $\ell$. Since $E$ is semistable at $\ell$ it follows that we may write $j=\ell^{-n}\frac{a}{b}$ with $n$ a positive integer and $a$ and $b$ coprime to $\ell$. Then define 
\begin{equation}\label{ellunit}
u_{\ell,m} :=d^n\bigg (\frac{a}{b} \bigg)^s.
\end{equation}

\begin{thm}\label{MainThm1}
Let $p$ be an odd prime and $E$ a semistable elliptic curve over $\mathbb{Q}$ with good reduction at $p$. Let $K=\mathbb{Q}(\zeta_p)$ and suppose that $\Sel_p(E/K)$ is trivial. Consider the family of $p$-extensions $L_m=K(\sqrt[p]{m})$. Then

$$
\dim_{\mathbb{F}_p} \Sel_p(E/L_m)^{\Gal(L_m/K)} = \sum_v \delta_v
$$
with $v$  running over all primes of $K$ and the $\delta_v$ are given by the following table: \\[3mm]

\noindent\hskip-13.5mm
\begin{tabular}{@{\vrule width 1.2pt\ }l|c|c|c@{\ \vrule width 1.2pt}}
\noalign{\hrule height 1.2pt}
Reduction type of $E$ at $v$ & $v$ ramified in $L_m/K$ & $v$ inert in $L_m/K$ & $v$ split in $L_m/K$ 
\\
\noalign{\hrule height 1.2pt}
good ordinary, $v | p$ & $
\delta_v=
\begin{cases}
1\text{ or } 2 & \text{if $\tilde{E}(k_v)[p]\neq 0$} \\
0 & \text{otherwise}
\end{cases}
$ & \multicolumn{2}{@{\ }c@{\ \vrule width 1.2pt}}{
$\delta_v = 0$
} \cr
\hline
good supersingular, $v|p$ & $
\delta_v =
\begin{cases}
p-2 & \text{if $p\mid m$} \\
0 & \text{otherwise}
\end{cases}
$ & \multicolumn{2}{@{\ }c@{\ \vrule width 1.2pt}}{
$\delta_v = 0$
} \cr
\hline
good, $v \nmid p$ & $\delta_v = \dim_{\mathbb{F}_p} 	\tilde{E}(k_v)[p]$ & \multicolumn{2}{@{\ }c@{\ \vrule width 1.2pt}}{
$\delta_v = 0$
} \cr
\hline
split multiplicative, $v \nmid p$ & $\delta_v = \begin{cases}
1 & \text{if $u_{\ell_v,m}^{\frac{q_v-1}{p}} \equiv 1 \mod \ell_v$}\\
0 & \text{otherwise}
\end{cases}
$
& $\delta_v = \begin{cases}
1 & \text{if $p \mid c_v$} \\
0 & \text{otherwise}
\end{cases}
$ & $\delta_v = 0$ \cr
\hline
nonsplit multiplicative, $v \nmid p$ & \multicolumn{3}{@{\ }c@{\ \vrule width 1.2pt}}{
$\delta_v = 0$
}\cr
\hline
\multicolumn{4}{@{\vrule width 1.2pt\ }l@{\ \vrule width 1.2pt}}{\small
Here $c_v$ is the Tamagawa number of $E/K_v$,  $\ell_v$ is the prime in $\mathbb{Q}$ below $v$ and $q_v$ is the size of the residue field of $K_v$.
}\cr
\hline
\end{tabular}

\end{thm}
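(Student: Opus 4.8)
The plan is to reduce the computation of $\dim_{\mathbb F_p}\Sel_p(E/L_m)^{G}$, $G=\Gal(L_m/K)$, to a sum of purely local contributions and then to evaluate each one place by place.

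\emph{Reduction to local conditions over $K$.} First I would exploit $\Sel_p(E/K)=0$, which forces $E(K)[p]=0$ and hence also $E(L_m)[p]=0$: since $G\cong\mathbb Z/p$ acts on $E(L_m)[p]\subseteq E[p]\cong(\mathbb Z/p)^{2}$ through a $p$-subgroup of some $\GL_{n}(\mathbb F_p)$ with $n\leqslant 2$, which has a nonzero fixed vector unless the module vanishes, while $E(L_m)[p]^{G}=E(K)[p]=0$. Inflation–restriction then gives $\res\colon H^1(K,E[p])\xrightarrow{\ \sim\ }H^1(L_m,E[p])^{G}$, and transporting the Selmer conditions along this isomorphism (using that restriction commutes with completion) identifies $\Sel_p(E/L_m)^{G}$ with the Selmer group over $K$ of $E[p]$ attached to the local conditions $\mathcal L_v:=\res_v^{-1}\!\big(\operatorname{im}(E(L_{m,w})/p\to H^1(L_{m,w},E[p]))\big)$ at each non-split $v$ (with $w\mid v$ in $L_m$, $\res_v\colon H^1(K_v,E[p])\to H^1(L_{m,w},E[p])$), and with the Kummer condition $\mathcal L_v^{0}:=\operatorname{im}(E(K_v)/p\to H^1(K_v,E[p]))$ at split $v$. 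Chasing the two Kummer sequences over $K_v$ and $L_{m,w}$ shows $\mathcal L_v^{0}\subseteq\mathcal L_v$ and
\[
\mathcal L_v/\mathcal L_v^{0}\;\cong\;\ker\!\big(H^1(K_v,E)\to H^1(L_{m,w},E)\big)[p]\;\cong\;H^1\!\big(G_v,E(L_{m,w})\big),\qquad G_v:=\Gal(L_{m,w}/K_v),
\]
which vanishes when $v$ splits.

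\emph{Killing the global correction.} Next I would compare $\mathcal L^{0}\subseteq\mathcal L$ via the Greenberg–Wiles comparison formula. The Kummer subspace $\mathcal L_v^{0}$ is maximal isotropic for local Tate duality (via the Weil pairing self-duality of $E[p]$), so the orthogonal system satisfies $\mathcal L_v^{\perp}\subseteq(\mathcal L_v^{0})^{\perp}=\mathcal L_v^{0}$; hence both Selmer groups appearing on the dual side of the formula are trapped between $0$ and $\Sel_p(E/K)=0$. The formula therefore collapses with no global correction, giving $\dim_{\mathbb F_p}\Sel_p(E/L_m)^{G}=\sum_v\dim_{\mathbb F_p}(\mathcal L_v/\mathcal L_v^{0})=\sum_v\dim_{\mathbb F_p}H^1(G_v,E(L_{m,w}))$. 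Thus $\delta_v=\dim_{\mathbb F_p}H^1(G_v,E(L_{m,w}))$, which is $0$ at split places and, by Lang's theorem, at places $v\nmid p$ of good reduction unramified in $L_m$. It remains to compute the local groups in the residual cases.

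\emph{Local computations.} For $v\nmid p$ the Herbrand quotient of $E(L_{m,w})$ over the degree-$p$ extension is $1$, so $\delta_v=\dim_{\mathbb F_p}\!\big(E(K_v)/N_{L_{m,w}/K_v}E(L_{m,w})\big)$. For good reduction at a ramified $v\nmid p$, the norm is onto the (prime-to-$p$) formal group and is multiplication by $p$ on $\widetilde E(k_v)$ since the residue field does not grow, so $\delta_v=\dim_{\mathbb F_p}\widetilde E(k_v)[p]$. For split multiplicative reduction at $v\nmid p$, the Tate parametrization $E(L_{m,w})\cong L_{m,w}^{\times}/q_E^{\mathbb Z}$ and Hilbert 90 give $H^1(G_v,E(L_{m,w}))\cong\ker\!\big(H^2(G_v,\mathbb Z)\to H^2(G_v,L_{m,w}^{\times})\big)$, which equals $H^2(G_v,\mathbb Z)\cong\mathbb Z/p$ precisely when $q_E$ is a norm from $L_{m,w}$; by local class field theory this reads ``$p\mid c_v$'' in the inert case, and in the ramified case — after a tame norm-residue (Hilbert symbol) computation involving $\operatorname{ord}_v(q_E)=-\operatorname{ord}_v(j)$ and the unit part of $q_E$ — it reads $u_{\ell_v,m}^{(q_v-1)/p}\equiv1\pmod{\ell_v}$; the nonsplit case reduces to the split one over the unramified quadratic twist, where the relevant $p$-part vanishes. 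For good reduction at $v\mid p$, I would use $0\to\widehat E(\mathfrak m_{L_{m,w}})\to E(L_{m,w})\to\widetilde E(k_v)\to0$: in the ordinary case the formal-group cohomology is governed by the unramified character on $\widehat E[p]$ over $K_v=\mathbb Q_p(\zeta_p)$, giving $\delta_v=0$ when $\widetilde E(k_v)[p]=0$ and $\delta_v\in\{1,2\}$ otherwise (the exact value not being pinned down at this generality); in the supersingular case $L_{m,w}/K_v$ is unramified unless $p\mid m$, so $\delta_v=0$ there, whereas if $p\mid m$ a Herbrand-quotient/different computation for the height-$2$ formal group over the wildly ramified degree-$p$ extension yields $\delta_v=p-2$.

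\emph{Main obstacle.} The genuine difficulty sits at the primes above $p$: handling the ordinary contribution, and above all extracting \emph{exactly} $p-2$ in the supersingular case with $p\mid m$, which needs precise control of the Galois cohomology of a height-$2$ formal group over a wildly ramified extension. The multiplicative ramified computation at $v\nmid p$ is elementary but bookkeeping-heavy, the crux being to match $q_E$ modulo norms against the explicitly defined unit $u_{\ell,m}$.
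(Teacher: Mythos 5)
Your global reduction is sound and is essentially the route the paper takes: the paper phrases it via Mazur--Rubin's comparison of the Selmer structures $\mathcal{F}$ and $\mathcal{F}+\mathcal{A}$ (with $\mathcal{A}$ coming from the twist $A=E_L$), but its own remark after the key proposition derives exactly your identity $\Sel_p(E/L_m)^G\simeq\prod_v W_{v,L_m}$ by inflation--restriction, the snake lemma and Cassels--Poitou--Tate, and your $\mathcal{L}_v/\mathcal{L}_v^0\simeq H^1(G_v,E(L_{m,w}))$ is the paper's Lemma on $W_{v,L}$. The problems are in two of the local computations.

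The serious gap is at supersingular $v\mid p$ with $p\nmid m$. You claim $L_{m,w}/K_v$ is then unramified, but this is false: for $p=3$, $m=2$, the field $\mathbb{Q}_3(\sqrt[3]{2})$ is a non-Galois, hence totally ramified, cubic extension of $\mathbb{Q}_3$, so $\mathbb{Q}_3(\zeta_3,\sqrt[3]{2})/\mathbb{Q}_3(\zeta_3)$ has ramification index $3$. In general, for a unit $m$ the extension $\mathbb{Q}_p(\zeta_p,\sqrt[p]{m})/\mathbb{Q}_p(\zeta_p)$ is typically wildly ramified with ramification break $t=1$, and $\delta_v=0$ in that case is \emph{not} a formal consequence of unramifiedness. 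One needs the nontrivial fact that the norm map of a formal group of height $\geqslant 2$ is surjective in a totally ramified $\mathbb{Z}/p$-extension precisely when $t=1$; this is the paper's Proposition on surjectivity of the norm, proved via Hazewinkel's expansion of $F_p(X_1,\dots,X_p)$, a trace/different computation, and a filtration argument. Your proposal simply does not address this case, and it is one of the two main technical inputs of the paper (the other being the $t=p-1$, $p\mid m$ computation giving $p-2$, which you correctly flag as hard but only gesture at).

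A second, smaller error is the nonsplit multiplicative case. You reduce to the split case over the unramified quadratic extension ``where the relevant $p$-part vanishes,'' but it need not vanish: if $v$ is inert in $L_m$ and $p\mid \ord_v(q_E)$, the split-multiplicative computation over the quadratic unramified extension gives a group of order $p$, and in any case knowing the group upstairs does not determine the group over $K_v$. The correct (and short) argument is the paper's: $H^1(G_v,E_0(L_w))\simeq H^1(G_v,\tilde E_{\ns}(k_w))$, which vanishes by Lang's theorem when $v$ is inert, and when $v$ is ramified because $\#\tilde E_{\ns}(k_v)=q_v+1$ is prime to $p$ (as tame ramification forces $p\mid q_v-1$); the quotient $E(L_w)/E_0(L_w)$ has order at most $2$ and contributes nothing. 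This one is easily repaired, but as written the step fails.
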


\vspace{1cm}

We remark that the sum given in the theorem is finite, as there are only finitely many $v$ such that $v$ is ramified in $L_m/K$ or $p|c_v$, and $\delta_v=0$ for all other primes. Note also that the contribution of the $\delta_v$ coming from primes $v|p$ of good supersingular reduction grows arbitrarily large with $p$, and this is the only type of reduction for which this phenomenon occurs. This theorem essentially follows from the more general scenario of considering arbitrary Galois extensions of degree $p$. However in the more general setting, we are only able to obtain upper and lower bounds for the contribution of supersingular primes to the $\mathbb{F}_p$-dimension of $\Sel_p(L/K)^G$. In order to state the more general case we use the following notation. Suppose $L/K$ is a Galois extension of degree $p$ with Galois group $G$, and let $w$ be a prime of $L$. Then denote by $\pi_w$ a uniformiser for the completion $L_w$. Let $\sigma$ be a generator for $G$, and set $t_w=\ord_w(\sigma(\pi_w)-\pi_w)-1$. Recall then that $t_w$ is the largest index for which the higher ramification group of $L_w/K_v$ is non-trivial, where $v$ is a prime of $K$ below $w$. When $v$ is (totally) ramified in $L$, then $t_w$ is related to the discriminant $\Delta_{L/K}$ via the formula
$$
(t_w+1)(p-1) = \ord_v(\Delta_{L/K}).
$$
For more details see section 3.3. Now suppose $v \nmid p$ and $E$ has semistable reduction at $v$. Then we may write $L_w=K_v(\sqrt[p]{\pi_v})$ where $\pi_v$ is a uniformiser in $K_v$ (see section 5.1). Writing $\mathcal{O}_v$ for the ring of integers of $K_v$, we let $u_v$ be the unit in $\mathcal{O}_v$ such that $j(E/K_v)=\pi_v^{-m}u_v$, where $-m=\ord_v(j(E)).$

\begin{thm}\label{MainThm2}
Let $p$ be an odd prime and $E$ a semistable elliptic curve over a number field $K$ with good reduction at all primes above $p$. Let $L/K$ be a Galois extension of degree $p$. Suppose also that $\Sel_p(E/K)$ is trivial. Then
$$
\dim_{\mathbb{F}_p} \Sel_p(E/L)^{\Gal(L/K)} = \sum_v \delta_v
$$
where the $\delta_v$ are as in Theorem \ref{MainThm1}, except for the following two cases:
\begin{itemize}
\item[(i)] If $v \nmid p$ is a prime of split multiplicative reduction, we have that
$$
\delta_v=\begin{cases}
1 & \text{if $u_v^{\frac{q_v-1}{p}} \equiv 1 \mod{\pi_v}$} \\
0 & \text{otherwise}
\end{cases}.
$$
\item[(ii)] If $v|p$ is a prime of good supersingular reduction, we have that
$$
\delta_v=0 \Longleftrightarrow t_w=1
$$
where $w$ is a place of $L$ above $v$. Further, for $t_w\geqslant 2$ we have
$$
f_v \leqslant \delta_v \leqslant \min\{f_v(t_w-1), [K_v:\mathbb{Q}_p]+2\}
$$
where $f_v$ is the inertia degree of $v$ in $K/\mathbb{Q}$.
\end{itemize}

\end{thm}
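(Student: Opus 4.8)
The plan is to reduce the computation of $\dim_{\mathbb{F}_p}\Sel_p(E/L)^G$ to a sum of purely local terms, and then to evaluate each local term case by case. The starting point is inflation–restriction applied to the extension $L/K$ together with the hypothesis $\Sel_p(E/K)=0$: the restriction map and the Hochschild–Serre spectral sequence should identify $\Sel_p(E/L)^G$ with a subgroup of $H^1(G, E(L)[p])$ cut out by local conditions, so that its dimension is controlled by the kernel of the product of local maps
$$
H^1(G,E(L)[p]) \longrightarrow \prod_v H^1(G_w, E(L_w))[p^{\infty}]/(\text{unramified or image part}),
$$
where $G_w$ is the decomposition group at a place $w\mid v$. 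The key structural input is that for a degree $p$ cyclic group $G$ and a finite $\mathbb{F}_p[G]$-module $M$ one has $H^1(G,M)\cong \hat H^0(G,M) = M^G/N M$, and the local terms likewise simplify; the global-to-local comparison then yields a formula $\dim\Sel_p(E/L)^G - \dim\Sel_p(E/K) = \sum_v \delta_v$ where each $\delta_v$ is the $\mathbb{F}_p$-dimension of a local cohomology group attached to $E/K_v$ and the local extension $L_w/K_v$. I expect this global reduction, and the precise identification of which local Tate cohomology group each $\delta_v$ computes, to be carried out in the body of the paper preceding this theorem; here the task is to evaluate those local groups.

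The next step is to run through the reduction types. For $v$ split in $L/K$ the decomposition group is trivial and $\delta_v=0$ automatically. For $v\nmid p$ of good reduction, the local term is computed from the Néron model: using that $E$ has good reduction, $H^1$ of the decomposition group acting on the local points is controlled by $\tilde E(k_v)[p]$, and for nonsplit multiplicative reduction one checks the component-group and torsion contributions vanish. For split multiplicative reduction at $v\nmid p$, one uses the Tate parametrisation $E(\overline{K_v}) = \overline{K_v}^{\times}/q_E^{\mathbb{Z}}$; this converts the local cohomology into an explicit Kummer-theoretic computation involving whether $q_E$, and hence (after matching valuations) the unit $u_v$, becomes a $p$-th power in the relevant extension, giving the stated congruence $u_v^{(q_v-1)/p}\equiv 1$ in the ramified case and the Tamagawa-number condition $p\mid c_v$ in the inert case. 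The passage between $u_v$ and $u_{\ell,m}$ in Theorem \ref{MainThm1} is a bookkeeping matter of tracking valuations of $j$ through $\mathbb{Q}\hookrightarrow K$.

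The heart of the argument, and the main obstacle, is case (ii): $v\mid p$ of good supersingular reduction. Here the formal group $\hat E$ over $\mathcal{O}_{K_v}$ has no $p$-torsion points but contributes substantially to the local cohomology, and the dimension of the relevant $H^1$ is sensitive to the wild ramification of $L_w/K_v$, measured by $t_w$. The plan is to use the filtration of $E(L_w)$ by $\hat E(\mathfrak{m}_{L_w})$ and reduce to computing the Tate cohomology of the formal group under the $\mathbb{Z}/p$-action on $L_w/K_v$; Sen's theory of ramification, or a direct filtration argument on $\hat E(\mathfrak{m}_{L_w})$ by powers of the maximal ideal together with the description of how $\sigma-1$ shifts valuations (precisely through $t_w$), should give that $\delta_v=0$ exactly when $t_w=1$ (the tamely-ramified-like boundary case) and otherwise bound the dimension below by $f_v$ and above by $\min\{f_v(t_w-1),[K_v:\mathbb{Q}_p]+2\}$. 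The lower bound $f_v$ comes from a single "generic" graded piece of the formal group filtration surviving; the upper bound $f_v(t_w-1)$ counts the graded pieces on which $\sigma-1$ can fail to be surjective, and the competing bound $[K_v:\mathbb{Q}_p]+2$ comes from the global Euler-characteristic constraint (local Tate duality plus the Euler characteristic formula for $E[p]$ over $K_v$) capping the total local contribution independently of $t_w$. The difficulty is that, unlike the ordinary case where $\tilde E(k_v)[p]$ gives a clean answer, in the supersingular case the exact dimension genuinely depends on finer invariants of the formal group and the extension than $t_w$ alone, which is precisely why only bounds, not an equality, are claimed.
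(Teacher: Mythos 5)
Your plan reproduces the paper's proof in its essentials: the theorem is assembled from the global-to-local identity $\dim_{\mathbb{F}_p}\Sel_p(E/L)^G=\sum_v\dim_{\mathbb{F}_p}W_{v,L}$ with $W_{v,L}=\ker\big(H^1(K_v,E)\to H^1(L_w,E)\big)$ (Proposition \ref{mainPro}, proved via the Mazur--Rubin comparison of Selmer structures, or equivalently by the inflation--restriction/snake-lemma argument you sketch), followed by exactly the local case analysis you describe --- reduction mod $v$ and Lang's theorem for the good and nonsplit multiplicative cases, the Tate parametrisation and tame Hilbert symbol for split multiplicative reduction, and the formal-group norm map filtered by powers of the maximal ideal (Propositions \ref{SurjectiveNorm} and \ref{herb}) for the supersingular bounds, where the $[K_v:\mathbb{Q}_p]+2$ bound comes from $p\mathcal{F}(\mathfrak{m}_{K_v})\subset N^{\mathcal{F}}_{L_w/K_v}\mathcal{F}(\mathfrak{m}_{L_w})$ essentially as you indicate. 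The one point to tighten is the global step: $\Sel_p(E/L)^G$ is not a subgroup of $H^1(G,E(L)[p])$ (that group is the kernel of restriction, and in fact vanishes here because $\Sel_p(E/K)=0$ forces $E(L)[p]=0$), and the stated equality, as opposed to the inequality $\leqslant$, requires the surjectivity of $\Sel_p(E/L)^G\to\prod_v W_{v,L}$, which comes from Cassels--Poitou--Tate (the obstruction injects into $\widehat{\Sel_p(E/K)}=0$) and not from inflation--restriction alone.
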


\begin{cor}\label{MainCor}
Under the assumptions of Theorem 1.2, we have that $\Sel_p(E/L)$ is trivial if and only if $\delta_v=0$ for all $v$.
\end{cor}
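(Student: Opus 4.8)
The plan is to deduce the corollary from Theorem \ref{MainThm2} together with the elementary fact that a finite $p$-group acting on a nonzero finite-dimensional $\mathbb{F}_p$-vector space has nonzero invariants. Write $G=\Gal(L/K)$, a group of order $p$.

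First I would observe that $\Sel_p(E/L)$ is a finite $\mathbb{F}_p$-vector space on which $G$ acts $\mathbb{F}_p$-linearly. If $\Sel_p(E/L)\neq 0$, then partitioning this finite set into $G$-orbits (each of size $1$ or $p$) and counting modulo $p$ forces the fixed set $\Sel_p(E/L)^G$ to have size divisible by $p$, in particular $\Sel_p(E/L)^G\neq 0$. The reverse implication $\Sel_p(E/L)=0\Rightarrow \Sel_p(E/L)^G=0$ is trivial. Hence $\Sel_p(E/L)=0$ if and only if $\Sel_p(E/L)^G=0$.

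Next I would apply Theorem \ref{MainThm2}, which gives $\dim_{\mathbb{F}_p}\Sel_p(E/L)^G=\sum_v\delta_v$. Each $\delta_v$ is a nonnegative integer: in every case where Theorem \ref{MainThm2} (or Theorem \ref{MainThm1}) supplies an explicit value this is immediate, and in the remaining case (ii) of a supersingular prime $v\mid p$, although only the bounds $f_v\leqslant \delta_v\leqslant \min\{f_v(t_w-1),[K_v:\mathbb{Q}_p]+2\}$ are available when $t_w\geqslant 2$, the quantity $\delta_v$ is nonetheless a well-defined nonnegative integer, and the theorem records that $\delta_v=0$ if and only if $t_w=1$. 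Since a sum of nonnegative integers vanishes precisely when every summand does, $\sum_v\delta_v=0$ if and only if $\delta_v=0$ for all $v$. Chaining the two equivalences yields the corollary.

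There is no substantial obstacle here; all the content sits in Theorem \ref{MainThm2}. The one point deserving a remark is that ``$\delta_v=0$ for all $v$'' is a meaningful and checkable condition even for supersingular $v\mid p$, where only bounds on $\delta_v$ are proved — this is exactly the role of the equivalence $\delta_v=0\Longleftrightarrow t_w=1$ in case (ii).
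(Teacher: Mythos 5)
Your argument is correct and is essentially the paper's own: the paper simply cites Lemma 1 of Serre, \emph{Local Fields}, \S IX for the fact that a nontrivial finite $p$-group acted on by a $p$-group has nontrivial fixed points, which you prove directly by the orbit-counting argument, and then both proofs conclude via Theorem \ref{MainThm2} and the nonnegativity of the $\delta_v$.
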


\begin{exa}
Let $E/\mathbb{Q}$ be the elliptic curve 17a1 in Cremona's database.  It is given by the Weierstrass equation $Y^2 +XY+Y = X^3-X^2-X-14$. Denote $K=\mathbb{Q}(\zeta_3)$. This curve satisfies $E(K)\simeq \mathbb{Z}/4\mathbb{Z}$ and $\#\Sha(E/K)[3]=1$, hence $\Sel_3(E/K)$ is trivial. Let us use Theorem 1.1 and Corollary 1.3 to determine necessary and sufficient conditions for there to be change in 3-Selmer in extensions of the form $L_m = K(\sqrt[3]{m})$. First note $E$ has good supersingular reduction at $3$ and split multiplicative reduction at $17$. Since $q_v=17^2$ and $u_{17,m}$ is already defined over $\mathbb{Q}$, it follows that 
$$
u_{17,m}^{\frac{q_v-1}{3}} \equiv 1 \pmod{\pi_v},
$$
hence there is $3$-Selmer growth in every extension $K(\sqrt[3]{m})$ for all cube-free $m$ divisible by $17$. It follows from Theorem 1.1 and Corollary 1.3 that $\Sel_3(E/L_m)$ is trivial if and only if the following conditions are satisfied:
\begin{itemize}
\item[(i)] $3 \nmid m$.
\item[(ii)] $17 \nmid m$.
\item[(iii)] There does not exist a prime $v$ of $K$ such that $v|m$ and $\tilde{E}(k_v)[3]\neq 0$.
\end{itemize}
Note that there are infinitely many cube-free $m$ satisfying the above three conditions and hence there are infinitely many extensions of the form $L_m$ such that $\rk\ E/L_m = 0$.  We remark that there are also infinitely many $v$ such that  $\tilde{E}(k_v)[3]\neq 0$. This can be seen for example by considering primes which split completely in $K(E[3])$. Incidentally this shows that $\dim_{\mathbb{F}_3}\Sel_3(E/L_m)$ is unbounded as $m$ varies over cube-free integers. The first few primes $v$ for which $\tilde{E}(k_v)[3]\neq 0$ are the primes above $11, 19$ and $29$.
\end{exa}

\begin{rem}
Note that the conditions obtained in the previous example are compatible with what is predicted by the global root number. Indeed, for $m$ divisible by $17$ parity predicts that $\rk\ E/\mathbb{Q}(\sqrt[3]{m})$ is odd and hence positive. It follows that the rank and hence $3$-Selmer should grow in every extension of the form $K(\sqrt[3]{m})$ for cube-free $m$ divisible by $17$, which is what we had previously obtained. 
\end{rem}

The idea of the proofs is to relate the $\mathbb{F}_p$-dimension of the Galois invariants of Selmer to the sum of the $\mathbb{F}_p$-dimensions of certain local cohomology groups which will be introduced in section 4. This is accomplished using a result of Mazur and Rubin which uses Cassels-Poitou-Tate global duality. Indeed, using this we obtain that
$$
\dim_{\mathbb{F}_p} \Sel_p(E/L)^G = \sum_v \delta_v
$$
where the $\delta_v$ are the $\mathbb{F}_p$-dimensions of the above mentioned local cohomology groups. Section 5 is almost entirely devoted to computing the $\mathbb{F}_p$-dimensions of these local cohomology groups which in many cases is equivalent to computing local norm indices. Of course the $\delta_v$ depend heavily on the type of reduction of $E$ at $v$ as well as the splitting behavious of $v$ in $L$. In section 3 we study the local norm map of formal groups in cyclic Galois extensions, as this is used in section 5 to bound the $\delta_v$ coming from supersingular primes.  Theorem \ref{MainThm2} is proved in section 5, along with Corollary \ref{MainCor}. As mentioned earlier, Theorem \ref{MainThm1} essentially follows from Theorem \ref{MainThm2}, except that because of the nature of the extensions considered there we are able to obtain more precise information. In section 6 we finish the proof of Theorem \ref{MainThm1}.

\vspace{5 mm}

\noindent \textbf{Acknowledgements.} I would like to thank Tim Dokchitser and Peter Stevenhagen for the constant guidance and support, as well as many helpful discussions and suggestions. I am also grateful to Vladimir Dokchitser for many discussions regarding the material of the paper. I also thank Alex Bartel, Kestutis \v{C}esnavi\v{c}ius and Adam Morgan for helpful conversations.

\section{Background and notation}

In this section we recall some properties of Selmer groups of elliptic curves (see e.g. \cite{Sil}, \S X), as well as establish some notation.
\medskip

If $K$ is a number field we will write $G_K$ for the absolute Galois group $\Gal(\overline{K}/K)$. For $G$ a profinite group and $A$ a discrete $G$-module, we write $H^n(G,A)$ to denote the cohomology groups of $G$ formed with continuous cochains. As usual, if $A$ is a $G_K$-module, we write $H^n(K,A)$ to denote the cohomology group $H^n(G_K,A)$, and sometimes we will write $H^n(L/K,A)$ to denote $H^n(\Gal(L/K),A)$. We will use $\inf,\res$ and $\cores$ to denote the inflation, restriction and corestriction maps of cohomology, respectively. For $M$ an abelian group and $n$ a positive integer, we write $M[n]$ to denote the $n$-torsion subgroup of $M$, that is, the subgroup of elements of $M$ annihilated by $n$. If $M$ is a torsion abelian group and $p$ is a prime, we denote by $M[p^\infty]=\cup_n M[p^n]$ the $p$-primary component of $M$. 

\medskip

As mentioned in the introduction, the $n$-Selmer group of $E$ over $K$ is defined by
$$
\Sel_n(E/K)=\ker\Big(H^1(K,E[n])\longrightarrow \prod_v H^1(K_v,E(\overline{K_v}))\Big).
$$
The $n$-Selmer group fits into the exact sequence

$$
0 \longrightarrow E(K)/nE(K) \longrightarrow \Sel_n(E/K) \longrightarrow \Sha(E/K)[n] \longrightarrow 0.
$$

\begin{pro}
Let $p$ be a prime let $S$ be a finite set of primes of $K$ containing all archimidean primes, all primes dividing $p$ and all primes where $E$ has bad reduction. Then
$$
\Sel_p(E/K)=\ker\Big(H^1(K_S/K,E[p])\longrightarrow \prod_{v \in S} H^1(K_v,E(\overline{K_v}))[p]\Big).
$$
where $K_S$ denotes the maximal extension of $K$ unramified outside $S$.
\end{pro}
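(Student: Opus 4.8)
The plan is to show that the Selmer group defined via cohomology of the full Galois group $G_K$ agrees with the one defined via the smaller group $\Gal(K_S/K)$, the key point being that every Selmer class is automatically unramified outside $S$, so that it comes from the restricted cohomology by inflation. First I would recall the inflation–restriction exact sequence
$$
0 \longrightarrow H^1(K_S/K, E[p]) \longrightarrow H^1(K, E[p]) \longrightarrow H^1(K_S, E[p]),
$$
which uses that $E[p] \subseteq E(K_S)$: indeed $E[p]$ is unramified outside the primes dividing $p$ and the primes of bad reduction by the criterion of Néron–Ogg–Shafarevich, hence $G_{K_S}$ acts trivially on $E[p]$ and the relevant $H^0$ term vanishes after passing to the quotient. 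This identifies $H^1(K_S/K, E[p])$ with the subgroup of classes in $H^1(K, E[p])$ that are trivial when restricted to $G_{K_S}$, i.e. unramified outside $S$.

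Next I would check that every class $c \in \Sel_p(E/K)$ lies in this subgroup. For a place $v \notin S$, the curve $E$ has good reduction at $v$, $v \nmid p$, and $v$ is non-archimedean; the local condition at $v$ defining the Selmer group is that $\res_v(c)$ lies in the image of $E(K_v)/pE(K_v)$ under the Kummer map inside $H^1(K_v, E[p])$. The standard computation (see e.g. \cite{Sil}, \S X, or the local analysis via the Néron model) shows that for such $v$ this image coincides with the unramified subgroup $H^1_{\mathrm{ur}}(K_v, E[p]) = \ker\big(H^1(K_v,E[p]) \to H^1(K_v^{\mathrm{ur}}, E[p])\big)$, using that $\#E(K_v)/pE(K_v) = \#E[p]^{G_{K_v}}$ for good reduction away from $p$ and that the unramified class group has the same order. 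Hence $\res_v(c)$ is unramified at every $v \notin S$, so $c$ becomes trivial on all inertia subgroups outside $S$, and therefore $c$ is inflated from $H^1(K_S/K, E[p])$.

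It then remains to identify the two descriptions of the local conditions. For $v \in S$ the local condition in the displayed formula is literally the image in $H^1(K_v, E[p])$ of the connecting map $E(K_v)/pE(K_v) \to H^1(K_v, E[p])$, which by the long exact Kummer sequence is exactly $H^1(K_v, E(\overline{K_v}))[p] \cap \mathrm{image}$; more precisely the kernel of $H^1(K_v, E[p]) \to H^1(K_v, E(\overline{K_v}))$ equals this image, and it lands in the $p$-torsion, so the map $H^1(K, E[p]) \to \prod_{v} H^1(K_v, E(\overline{K_v}))$ defining the original Selmer group factors, on the relevant classes, through $\prod_{v \in S} H^1(K_v, E(\overline{K_v}))[p]$ once we know the conditions at $v \notin S$ are automatic. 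Combining the two steps: an element of $H^1(K_S/K, E[p])$ maps into $\Sel_p(E/K)$ iff its localizations at $v \in S$ are trivial in $H^1(K_v, E(\overline{K_v}))[p]$, and conversely every Selmer class arises this way. The main obstacle is the second step — verifying that the good-reduction local condition at $v \notin S$ is precisely the unramified condition — which is where the hypotheses "$v \nmid p$'' and "good reduction'' are essential; the inflation–restriction bookkeeping and the Kummer-sequence identification are routine.
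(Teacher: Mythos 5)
Your argument is correct, but note that the paper itself does not prove this proposition at all: it simply cites Milne, \emph{Arithmetic Duality Theorems}, \S I, Corollary 6.6. What you have written out is essentially the standard argument underlying that reference, and the two key inputs you isolate are the right ones: (a) for $v \notin S$ (so $v \nmid p$, good reduction, non-archimedean) the image of the local Kummer map equals the unramified subgroup $H^1(K_v^{\mathrm{ur}}/K_v, E[p])$ --- this is the Lemma 19.3 of Cassels that the paper itself invokes later in Example 4.1 --- and (b) classes unramified outside $S$ are exactly those inflated from $H^1(K_S/K,E[p])$. The one step you pass over quickly is the implication from ``$\res_v(c)$ unramified for all $v\notin S$'' to ``$c$ restricts to zero on $G_{K_S}$'': since $E[p]\subseteq E(K_S)$, the restriction of $c$ to $G_{K_S}$ is a $G_K$-equivariant homomorphism, and one must use that $G_{K_S}$ is topologically generated as a normal subgroup by the inertia subgroups of places outside $S$ to conclude it vanishes. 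Your third paragraph is also somewhat loosely phrased (the expression ``$H^1(K_v,E(\overline{K_v}))[p]\cap \mathrm{image}$'' does not quite parse), but the intended content --- that $\ker\bigl(H^1(K_v,E[p])\to H^1(K_v,E(\overline{K_v}))\bigr)$ is the Kummer image and that the induced map lands in the $p$-torsion --- is correct and suffices. In short: a correct direct proof where the paper offers only a citation.
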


\begin{proof}
This is \cite{Milne} \S I, Corollary 6.6.
\end{proof}

\begin{pro}[Cassels-Poitou-Tate]\label{CPT}
Let $E$, $S$ and $K$ be as above. Then there is an exact sequence
$$
\begin{CD}
\Sel_p(E/K) @>>> H^1(K_S/K,E[p]) @>>>\prod_{v\in S} H^1(K_v,E(\overline{K_v}))[p] \\
   @.										@.												@VVV									\\
\prod_{v\in S} H^2(K_v,E(\overline{K_v}))[p]    @<<<      H^2(K_S/K,E[p])  @<<< \widehat{\Sel_p(E/K)}\\
    @VVV \\
    \widehat{E(K)[p]}  @>>> 0.
\end{CD}
$$
\end{pro}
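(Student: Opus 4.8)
The plan is to deduce the sequence from the Poitou--Tate nine-term exact sequence for the finite $\Gal(K_S/K)$-module $E[p]$, combined with the local Kummer sequences and Tate local duality. The hypotheses on $S$ (it contains the archimedean places, the places above $p$, and the places of bad reduction of $E$) guarantee via the N\'eron--Ogg--Shafarevich criterion that $E[p]$ is unramified outside $S$, so the nine-term sequence applies, and the Weil pairing identifies $E[p]$ with its own Cartier dual $\Hom(E[p],\mu_p)$, so that no separate dual module intervenes. Thus for $E[p]$ the nine-term sequence (see e.g.\ Milne, \emph{Arithmetic Duality Theorems}, I.4.10) reads
\begin{multline*}
0\to E(K)[p]\to\textstyle\prod_{v\in S}H^0(K_v,E[p])\to H^2(K_S/K,E[p])^\vee\\
\to H^1(K_S/K,E[p])\xrightarrow{\lambda}\textstyle\prod_{v\in S}H^1(K_v,E[p])\\
\xrightarrow{\mu}H^1(K_S/K,E[p])^\vee\xrightarrow{\nu}H^2(K_S/K,E[p])\\
\to\textstyle\prod_{v\in S}H^2(K_v,E[p])\to\widehat{E(K)[p]}\to0,
\end{multline*}
where $\lambda$ is the product of localisation maps, $\mu$ sends a family $(c_v)$ to the functional $a\mapsto\sum_v\langle c_v,\res_v a\rangle_v$ for the local Tate pairings $\langle\,,\,\rangle_v$ (built from the Weil pairing), and $\widehat{(\,\cdot\,)}$ denotes Pontryagin dual; here I have used $H^0(K_S/K,E[p])=E(K)[p]$.

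For the local input, the Kummer sequence $0\to E[p]\to E\xrightarrow{p}E\to0$ gives, for each $v$, the exact sequence
\[
0\to E(K_v)/pE(K_v)\xrightarrow{\kappa_v}H^1(K_v,E[p])\to H^1(K_v,E(\overline{K_v}))[p]\to0,
\]
and similarly exhibits $H^2(K_v,E[p])$ as an extension of $H^2(K_v,E(\overline{K_v}))[p]$ by $H^1(K_v,E(\overline{K_v}))/p$, so that the $H^2$-terms of the nine-term sequence match those in the statement. Write $L_v:=\kappa_v\big(E(K_v)/pE(K_v)\big)\subseteq H^1(K_v,E[p])$. Tate local duality for the abelian variety $E$ then says $L_v^\perp=L_v$ under $\langle\,,\,\rangle_v$ and that the quotient $H^1(K_v,E[p])/L_v\cong H^1(K_v,E(\overline{K_v}))[p]$ is canonically the Pontryagin dual of $L_v$. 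Finally, by the previous proposition, $\Sel_p(E/K)=\{a\in H^1(K_S/K,E[p]):\res_v a\in L_v\text{ for all }v\in S\}$, which in particular injects into $H^1(K_S/K,E[p])$.

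Now I would splice. Since $\langle x_v,\res_v a\rangle_v=0$ whenever $x_v\in L_v$ and $a\in\Sel_p(E/K)$, every functional $\mu((x_v))$ with $(x_v)\in\prod_{v\in S}L_v$ annihilates $\Sel_p(E/K)$; conversely, testing a class $a$ against the families supported at a single place and using $L_v^\perp=L_v$ shows that any class annihilated by all of them satisfies $\res_v a\in L_v$ for all $v$, hence lies in $\Sel_p(E/K)$. Thus $\mu\big(\prod_{v\in S}L_v\big)^\perp=\Sel_p(E/K)$ inside $H^1(K_S/K,E[p])$, so by biduality of finite abelian groups $\mu\big(\prod_{v\in S}L_v\big)=\Sel_p(E/K)^\perp$. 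Using this, together with $\ker\mu=\mathrm{im}\,\lambda$ and $\mathrm{im}\,\mu=\ker\nu$, the maps $\lambda$, $\mu$ (after composing with the projection $H^1(K_S/K,E[p])^\vee\twoheadrightarrow\widehat{\Sel_p(E/K)}$) and $\nu$ descend to maps $\bar\lambda,\bar\mu,\bar\nu$ between the corresponding terms of the stated sequence, with $\ker\bar\lambda=\Sel_p(E/K)$, and a routine diagram chase gives exactness throughout: exactness at $H^1(K_S/K,E[p])$ and at the two $H^2$-positions is inherited unchanged from the nine-term sequence, while exactness at $\prod_v H^1(K_v,E(\overline{K_v}))[p]$ and at $\widehat{\Sel_p(E/K)}$ is a short computation modulo $\prod_v L_v$, respectively $\Sel_p(E/K)^\perp$. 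There is really no single hard step here; the only part that is not purely formal is the identity $\mu\big(\prod_v L_v\big)^\perp=\Sel_p(E/K)$, and this is precisely where the self-orthogonality $L_v^\perp=L_v$ of the local image of the Kummer map — i.e.\ Tate local duality for $E$ — is used, so the real work is in invoking Poitou--Tate and local duality with mutually compatible sign and duality conventions so that this identity holds on the nose. Alternatively, the whole sequence can simply be quoted from the Cassels--Poitou--Tate sequence for abelian varieties in Milne, \emph{Arithmetic Duality Theorems}, Chapter I, \S6.
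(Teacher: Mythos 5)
The paper's own ``proof'' of this proposition is a one-line citation to \cite{GCEC} \S 1.7, so your derivation from the Poitou--Tate nine-term sequence is a genuinely different and more self-contained route. The heart of your argument --- the identity $\mu\big(\prod_{v\in S}L_v\big)=\Sel_p(E/K)^\perp$, proved from the self-orthogonality $L_v^\perp=L_v$ of the local Kummer image together with biduality of finite groups, and the ensuing splicing that replaces $\prod_v H^1(K_v,E[p])$ by $\prod_v H^1(K_v,E(\overline{K_v}))[p]$ and $H^1(K_S/K,E[p])^\vee$ by $\widehat{\Sel_p(E/K)}$ --- is the standard argument and is correct; it is also exactly the portion of the sequence the paper later uses (to show $\coker\varphi$ is trivial in the remark after Proposition \ref{mainPro}). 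What your route buys over the bare citation is that it makes visible precisely where local Tate duality enters.

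There is, however, one step that does not work as written: the claim that the Kummer sequence makes ``the $H^2$-terms of the nine-term sequence match those in the statement.'' The Kummer sequence gives $0\to H^1(K_v,E(\overline{K_v}))/p\to H^2(K_v,E[p])\to H^2(K_v,E(\overline{K_v}))[p]\to 0$, and for nonarchimedean $v$ local duality gives $H^2(K_v,E(\overline{K_v}))=0$ while $H^2(K_v,E[p])\cong\widehat{E(K_v)[p]}$ is generally nonzero; so the kernel term is the whole group and the two $H^2$'s do not coincide. In particular the surjection $\prod_v H^2(K_v,E[p])\to\widehat{E(K)[p]}$ of the nine-term sequence (dual to localization of $p$-torsion points) does not factor through $\prod_v H^2(K_v,E(\overline{K_v}))[p]$, and the tail your argument actually produces is $\cdots\to H^2(K_S/K,E[p])\to\prod_{v\in S}H^2(K_v,E[p])\to\widehat{E(K)[p]}\to 0$. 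That is the correct form of Cassels--Poitou--Tate; as printed, with $H^2(K_v,E(\overline{K_v}))[p]$ in the last local slot, the displayed sequence would force $E(K)[p]=0$ for odd $p$, since that product vanishes. This discrepancy lies in the statement rather than in the part of your argument that the paper relies on, but you should either correct the last local term to $H^2(K_v,E[p])$ or supply the (currently missing, and in fact impossible) justification for the substitution you assert.
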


\begin{proof}
See for instance \cite{GCEC} \S 1.7.
\end{proof}

\begin{pro}\label{pdiv}
Let $G$ be a pro-$p$-group and $A$ a $G$-module which is uniquely divisible by $p$. Then
$$
H^i(G,A)=0 
$$
for all $i\geq 1$.
\end{pro}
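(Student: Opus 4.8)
The plan is to reduce the statement to the vanishing of Tate cohomology for a finite $p$-group via a standard dimension-shifting / limit argument, using only the divisibility hypothesis. First I would treat the case of a finite $p$-group $G$: since $A$ is uniquely divisible by $p$, multiplication by $p$ is an automorphism of $A$, hence also an automorphism of each cohomology group $H^i(G,A)$ for $i\geqslant 1$. On the other hand, for $G$ a finite group of order $p^k$, every element of $H^i(G,A)$ with $i\geqslant 1$ is annihilated by $|G|=p^k$ (corestriction–restriction on the trivial subgroup, or the standard fact that $|G|$ kills positive-degree cohomology of a finite group). Thus $H^i(G,A)$ is both uniquely $p$-divisible and killed by a power of $p$, forcing $H^i(G,A)=0$.

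Next I would pass from finite $p$-groups to a general pro-$p$-group $G$. Writing $G=\varprojlim G/U$ over the open normal subgroups $U$, continuous cohomology commutes with this limit in the sense that
$$
H^i(G,A)=\varinjlim_{U} H^i(G/U, A^{U}),
$$
where the colimit is taken over open normal $U$ with the inflation maps. Here $A^U$ is again uniquely divisible by $p$ (the inverse of multiplication by $p$ on $A$ preserves the $U$-fixed points), and each $G/U$ is a finite $p$-group, so by the first step every term $H^i(G/U, A^U)$ vanishes for $i\geqslant 1$. Hence the colimit vanishes as well, and $H^i(G,A)=0$ for all $i\geqslant 1$.

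I expect the only genuine subtlety to be the interchange of continuous cohomology with the inverse limit defining $G$, i.e. justifying the identity $H^i(G,A)=\varinjlim_U H^i(G/U,A^U)$ for the continuous (discrete-module) cohomology used in the paper. This is standard — it is precisely the definition of continuous cohomology for profinite groups acting on discrete modules, since every continuous cochain factors through some finite quotient $G/U$ — so this step is a citation rather than an obstacle. Everything else is the elementary "uniquely divisible and torsion implies zero" observation applied at finite level, so once the finite $p$-group case is in hand the general case follows immediately.
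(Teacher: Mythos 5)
Your proof is correct and rests on the same key observation as the paper's: a group on which multiplication by $p$ acts invertibly and which is also $p$-power torsion must vanish. The paper applies this directly at the profinite level --- the long exact sequence attached to $0 \to A[p] \to A \to A \to 0$ shows each $H^n(G,A)$ is uniquely $p$-divisible, and it then invokes the fact that positive-degree cohomology of a pro-$p$-group is $p$-power torsion --- whereas you unwind that torsion fact by reducing to finite quotients via $H^i(G,A)=\varinjlim_U H^i(G/U,A^U)$; the two routes are essentially equivalent.
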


\begin{proof}
The exact sequence
$$
0 \longrightarrow A[p] \longrightarrow A \longrightarrow A \longrightarrow 0
$$
yields an exact sequence of cohomology groups
$$
H^n(G,A[p]) \longrightarrow H^n(G,A) \overset{p}{\longrightarrow} H^n(G,A) \longrightarrow H^{n+1}(G,A[p]).
$$
Since $A$ is uniquely divisible by $p$ we have that $A[p]$ is trivial and so we obtain an isomorphism
$$
[p]:H^n(G,A) \overset{\sim}{\longrightarrow} H^n(G,A),
$$
hence $H^n(G,A)$ is uniquely divisible by $p$ for all $n$. But $G$ is a pro-$p$-group and so every element of $H^n(G,A)$ is annihilated by some power of $p$ for $n \geq 1$ so the result follows.
\end{proof}

\section{The norm map on formal groups}

 In this section we will analyse the cokernel of the norm map of formal groups in cyclic degree $p$ extensions.
 
\subsection{Generalities on formal groups}
For this section we let $K$ be a finite extension of $\mathbb{Q}_p$ with residue field $k$ and normalized valuation $v_K$. We denote its ring of integers by $\mathcal{O}_K$ and its maximal ideal by $\mathfrak{m}_K$. Let $\mathcal{F}$ be a one-dimensional commutative formal group law defined over $\mathcal{O}_K$. Recall this is given by a power series $F\in\mathcal{O}_K[[X,Y]]$ which satisfies the following properties:
\begin{itemize}
\item[(i)] $F(X,Y)=F(Y,X)$
\item[(ii)] $ F(X,0)=F(0,X)=X$
\item[(iii)] $F(X,F(Y,Z))=F(F(X,Y),Z)$
\end{itemize}
As usual, we define a group operation on the set $\mathfrak{m}_K$ by
$$
x\oplus_\mathcal{F} y = F(x,y)
$$
and we denote the corresponding abelian group by $\mathcal{F}(\mathfrak{m}_K)$. When there is no risk of confusion we will omit the subscript $\mathcal{F}$ from $\oplus_\mathcal{F}$.

Define  $F_n(X_1,\dots X_n)\in \mathcal{O}_K[[X_1,\dots, X_n]]$ inductively by 
$$
F_2(X_1,X_2)=F(X_1,X_2), \quad F_{n+1}(X_1,\dots,X_{n+1})=F(F(X_1,\dots, X_n),X_{n+1}).
$$
Then given a positive integer $n$, the multiplication by $n$ map on $\mathcal{F}(\mathfrak{m}_K)$ is simply given by the power series $[n]_{\mathcal{F}}(X)=F_n(X,\dots,X)$. Consider the multiplication-by-$p$ map $[p]_{\mathcal{F}}(X)=F_p(X,\dots,X)$ mod $\mathfrak{m}_K$. Recall that (see \cite{Sil}) the \emph{height} of $\mathcal{F}$, denoted $\text{ht}(\mathcal{F})$ is the largest integer $h$ such that $[p]_{\mathcal{F}}(X)\equiv g(X^{p^h}) \mod\mathfrak{m}_K$ for some power series $g(X)\in \mathcal{O}_K[[X]]$, where we set $h=\infty$ if $[p]_{\mathcal{F}}(X)\equiv 0 \mod\mathfrak{m}_K$.

\subsection{The norm map of a formal group}
Let $L$ be a finite Galois extension of $K$ with Galois group $G=\{\sigma_1,\dots,\sigma_n\}$. Then we also have an abelian group structure on $\mathfrak{m}_L$ defined using $F$ as above, and we denote this group by $\mathcal{F}(\mathfrak{m}_L)$. Recall that there is natural action of $G$ on $\mathcal{F}(\mathfrak{m}_L)$ and that the norm map is defined by
\begin{align*}
&N^{\mathcal{F}}_{L/K}:\mathcal{F}(\mathfrak{m}_L) \longrightarrow \mathcal{F}(\mathfrak{m}_K)\\
&x \longmapsto \sigma_1(x)\oplus \dots \oplus \sigma_n(x).
\end{align*}
The following lemma, which appeared in \cite{Haze}, will be quite useful when studying the norm map on formal groups. In order to state it we need some notation. Given a monomial $M=X_1^{r_1}\dots X_n^{r_n}$ in the variables $X_1,\dots,X_n$, define
$$
\Tr(M)=X_1^{r_1}\dotsb X_n^{r_n}+X_2^{r_1}\dotsb X_n^{r_{n-1}}X_1^{r_n}+\dots+X_n^{r_1}\dotsb X_{n-1}^{r_n}.
$$

\begin{lem}\label{HazeLem}
Let $(\mathcal{F},F)$ be a formal group as above with height $h<\infty$. Then
\begin{equation}\label{Fnorm}
F_n(X_1,\dots,X_n)=\Tr(X_1)+\sum_{i=1}^\infty a_i(X_1 X_2\dotsb X_n)^i+\sum_M a_M\Tr(M)
\end{equation}
where $M$ runs over all monomials of degree at least $2$ which are not of the form $(X_1\dotsb X_n)^i$. If in addition $n=p$, then $v_K(a_i)\geqslant 1$ unless $i=kp^{h-1}$ with $k=1,2\dots$, and $v_K(a_i)=0$ when $i=p^{h-1}$.
\end{lem}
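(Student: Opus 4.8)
The plan is to establish the structural identity \eqref{Fnorm} first, and then read off the valuation estimates on the $a_i$ by specialising all the variables to a single one and reducing modulo $\mathfrak{m}_K$. \textbf{The shape of $F_n$.} Since $F$ is commutative and associative, the iterated sum $F_n(X_1,\dots,X_n) = X_1 \oplus \dots \oplus X_n$ is independent of the order of its arguments, so $F_n$ is a symmetric power series; in particular it is invariant under the cyclic shift $X_i \mapsto X_{i+1}$ (subscripts mod $n$). From $F(X,Y) = X + Y + (\text{higher order})$ an easy induction gives that $F_n$ has no constant term and linear part $X_1 + \dots + X_n = \Tr(X_1)$. Hence $F_n - \Tr(X_1)$ is a symmetric series all of whose monomials have degree $\geq 2$, and collecting its monomials into cyclic-orbit sums yields \eqref{Fnorm}: the monomials $(X_1\cdots X_n)^i$ are fixed by the shift and contribute the terms $a_i(X_1\cdots X_n)^i$, while every remaining orbit contributes a term $a_M\Tr(M)$ for a suitable non-diagonal $M$ of degree $\geq 2$ in it. The coefficient $a_i$ is simply the coefficient of $(X_1\cdots X_n)^i$ in $F_n$, so $a_i \in \mathcal{O}_K$ because $F \in \mathcal{O}_K[[X_1,\dots,X_n]]$, and likewise $a_M \in \mathcal{O}_K$. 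Finally, when $n = p$ is prime a monomial $X_1^{r_1}\cdots X_p^{r_p}$ is cyclically fixed exactly when $r_1 = \dots = r_p$, so every non-diagonal cyclic orbit is free of size $p$.

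\textbf{Specialising to the diagonal.} For the second assertion I would set $X_1 = \dots = X_p = X$ in \eqref{Fnorm}. By definition $[p]_{\mathcal{F}}(X) = F_p(X,\dots,X)$; under this substitution $\Tr(X_1)$ becomes $pX$, each $(X_1\cdots X_p)^i$ becomes $X^{pi}$, and each $\Tr(M)$ becomes $pX^{\deg M}$ since its orbit has size $p$. Thus
$$
[p]_{\mathcal{F}}(X) = pX + \sum_{i\geq 1} a_i X^{pi} + p\sum_M a_M X^{\deg M}.
$$
As $K/\mathbb{Q}_p$ is finite we have $p \in \mathfrak{m}_K$, so reduction modulo $\mathfrak{m}_K$ kills the first and third terms and leaves
$$
[p]_{\mathcal{F}}(X) \equiv \sum_{i\geq 1} \overline{a_i}\,X^{pi} \pmod{\mathfrak{m}_K},
$$
with $\overline{a_i}$ the image of $a_i$ in the residue field $k$. (In particular $[p]_{\mathcal{F}}(X) \bmod \mathfrak{m}_K$ is a power series in $X^p$, so $h \geq 1$.)

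\textbf{Comparing with the height.} By the normal form attached to the height (see \cite{Sil}, \S IV), $[p]_{\mathcal{F}}(X) \bmod \mathfrak{m}_K$ is a power series in $X^{p^h}$ whose coefficient of $X^{p^h}$ is nonzero in $k$. Matching this against the expression above, whenever $\overline{a_i} \neq 0$ the exponent $pi$ must be divisible by $p^h$, i.e. $p^{h-1} \mid i$; equivalently $v_K(a_i) \geq 1$ unless $i = kp^{h-1}$ for some $k \geq 1$. Comparing the coefficients of $X^{p^h}$ then gives $\overline{a_{p^{h-1}}} \neq 0$, that is $v_K(a_{p^{h-1}}) = 0$, which is the remaining claim.

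\textbf{Where the work lies.} There is no genuine obstacle here. The substance is the bookkeeping of the first step, combined with the elementary point that for $n = p$ prime the only monomials with non-free cyclic orbit are the diagonal ones $(X_1\cdots X_p)^i$ — this is what makes all the $\Tr(M)$-contributions divisible by $p$, hence invisible modulo $\mathfrak{m}_K$. The one external ingredient is the description of $[p]_{\mathcal{F}} \bmod \mathfrak{m}_K$ in terms of the height, which identifies the residues $\overline{a_i}$ with the coefficients of $[p]_{\mathcal{F}}$ modulo $\mathfrak{m}_K$.
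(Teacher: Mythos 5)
Your proof is correct and follows essentially the same route as the paper's (much terser) argument: symmetry of $F_n$ gives the orbit decomposition, and specialising to the diagonal and reducing mod $\mathfrak{m}_K$ identifies the $\overline{a_i}$ with the coefficients of $[p]_{\mathcal{F}}\bmod\mathfrak{m}_K$, from which the valuation claims follow via the height. You are in fact slightly more careful than the paper in invoking the normal-form theorem for homomorphisms in characteristic $p$ (nonvanishing of $g'(0)$), which is genuinely needed for the assertion $v_K(a_{p^{h-1}})=0$ and does not follow from the maximality in the definition of height alone.
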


\begin{proof}
Equation (\ref{Fnorm}) follows from the fact that $F_n(X_1,\dots,X_n)=F_n(X_{\sigma(1)},\dots,X_{\sigma(n)})$ for every permutation $\sigma$ of $\{1,\dots,n\}$. The second statement follows directly from the definition of height, since reducing mod $\mathfrak{m}_K$ gives something of the form $g(X^{p^h})$.
\end{proof}

\begin{cor}\label{NormCor}
Let $L/K$ be a finite Galois extension of degree $n$, and let $\Tr_{L/K}$ and $N_{L/K}$ denote the usual trace and field norm. Then for all $x\in \mathcal{F}(\mathfrak{m}_L)$ we have
$$
N_{\mathcal{F}}(x) \equiv \Tr_{L/K}(x)+\sum_{i=1}^\infty a_i N_{L/K}(x)^i \mod \Tr_{L/K}(x^2\mathcal{O}_L).
$$
If $n=p$ then the $a_i$ satisfy the same statements as in Lemma \ref{HazeLem}.
\end{cor}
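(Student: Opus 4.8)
The plan is to substitute the Galois conjugates of $x$ for the variables in the formula of Lemma~\ref{HazeLem}. Write $x_i=\sigma_i(x)$, so that $N_{\mathcal F}(x)=x_1\oplus\dots\oplus x_n=F_n(x_1,\dots,x_n)$, and apply $(\ref{Fnorm})$ at $(X_1,\dots,X_n)=(x_1,\dots,x_n)$. The term $\Tr(X_1)=X_1+\dots+X_n$ takes the value $\sum_i\sigma_i(x)=\Tr_{L/K}(x)$; the monomial $(X_1\cdots X_n)^i$ takes the value $N_{L/K}(x)^i$, and since $N_{L/K}(x)\in\mathfrak{m}_K$ and the $a_i$ lie in $\mathcal{O}_K$ the series $\sum_i a_i N_{L/K}(x)^i$ converges in $\mathcal{O}_K$; moreover the coefficients $a_i$ occurring here are literally those of Lemma~\ref{HazeLem}, so the statement in the case $n=p$ follows at once. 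It therefore remains to show that the remaining contribution $R:=\sum_M a_M\,\Tr(M)(x_1,\dots,x_n)$ lies in $\Tr_{L/K}(x^2\mathcal{O}_L)$.

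Let $v_L$ be the normalized valuation of $L$. Since each $\sigma_i$ preserves $v_L$, every monomial occurring in some $\Tr(M)$ evaluates at $(x_1,\dots,x_n)$ to an element of valuation $(\deg M)v_L(x)\geq 2v_L(x)$; hence each such term, and the convergent sum $R$, lies in $x^2\mathcal{O}_L\cap\mathcal{O}_K$. If $L/K$ is tamely ramified this intersection is already equal to $\Tr_{L/K}(x^2\mathcal{O}_L)$ and we are done; otherwise $L/K$ is wildly ramified, so $p\mid n$, and in the only case we shall need, $n=p$, the group $G$ is cyclic. Fix a generator $\sigma$ and order the conjugates so that $\sigma_i=\sigma^{i-1}$. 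Using $\sigma^n=1$, the $k$-th cyclic shift of a monomial $M=X_1^{r_1}\cdots X_n^{r_n}$ evaluates to $\sigma^{k}\big(\prod_j\sigma^{j-1}(x)^{r_j}\big)$, so summing over $k=0,\dots,n-1$ gives $\Tr(M)(x_1,\dots,x_n)=\Tr_{L/K}(w_M)$ with $w_M:=\prod_j\sigma^{j-1}(x)^{r_j}\in\mathfrak{m}_L^{(\deg M)v_L(x)}\subseteq x^2\mathcal{O}_L$. Since $v_L(w_M)\to\infty$, the sum $W:=\sum_M a_M w_M$ converges to an element of $x^2\mathcal{O}_L$, and by continuity of the trace $R=\Tr_{L/K}(W)$, as required.

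The point that needs care is this last step. A priori $R$ is only known to lie in $x^2\mathcal{O}_L\cap\mathcal{O}_K$, and for wildly ramified $L/K$ this ideal is strictly larger than $\Tr_{L/K}(x^2\mathcal{O}_L)$, so the asserted congruence genuinely says more than that $R$ is divisible by $x^2$ in $\mathcal{O}_L$. Bridging this gap is precisely what the cyclic rewriting of each $\Tr(M)$ as an honest field trace of an element of $x^2\mathcal{O}_L$ accomplishes; the attendant convergence and continuity-of-trace verifications are routine. (A non-cyclic $G$ with wild ramification does not occur in this paper and would require a somewhat different argument.)
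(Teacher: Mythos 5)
Your proof is correct and follows the same route as the paper, whose own proof is the one-line observation that $N^{\mathcal{F}}_{L/K}(x)=F_n(\sigma_1(x),\dots,\sigma_n(x))$ together with Lemma \ref{HazeLem}. You simply supply the details that the paper leaves implicit — in particular the rewriting of each evaluated cyclic sum $\Tr(M)(x_1,\dots,x_n)$ as an honest field trace $\Tr_{L/K}(w_M)$ with $w_M\in x^2\mathcal{O}_L$, which is exactly the point needed to get the congruence modulo $\Tr_{L/K}(x^2\mathcal{O}_L)$ rather than merely modulo $x^2\mathcal{O}_L\cap\mathcal{O}_K$ in the wildly ramified (and only relevant) case $n=p$.
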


\begin{proof}
By definition we have $N^{\mathcal{F}}_{L/K}(x)=F_n(\sigma_1(x),\dots, \sigma_n(x))$, so the result follows from Lemma \ref{HazeLem}.
\end{proof}

\subsection{Norm map of formal groups in cyclic extensions}

As before $K$ is a finite extension of $\mathbb{Q}_p$ and $(\mathcal{F},F)$ is a one-dimensional commutative formal group law over $\mathcal{O}_K$. Denote by $f_K$ the inertia degree of $K$ over $\mathbb{Q}_p$, that is, the dimension of $k$ as an $\mathbb{F}_p$-vector space. Let $L/K$ be a cyclic totally ramified Galois extension with $G\simeq \mathbb{Z}/p\mathbb{Z}$. Let $\sigma$ be a generator of $G$ and put $t=v_L(\sigma(\pi_L)-\pi_L)-1$. Then the ramification groups of $G$ are
\begin{align*}
G&=G_0=\dots =G_t\\
\{1\}&=G_{t+1}=\dotsb
\end{align*}
where $G_0$ is the inertia group  of $L/K$.
Since $L/K$ is totally ramified of degree $p$, it is wildly ramified and hence $t\geqslant 1$. Denote by $\mathfrak{D}=\mathfrak{D}_{L/K}$ the different, and let $m$ be defined by the equation
$$
\mathfrak{D}=\mathfrak{m}_L^m.
$$
Then it is known (see for example \cite{SerreLF}, \S IV.3) that $m=(t+1)(p-1)$. Note since $L/K$ is totally ramified, $m$ also equals $\ord_K(\Delta_{L/K})$  and we have the following lemma.

\begin{lem}\label{TraceLemma}
Let $n\geqslant 0$ be an integer, and set $r=\lfloor (m+n)/p \rfloor$. Then 
$$
\Tr_{L/K} (\mathfrak{m}_L^n)=\mathfrak{m}_K^r.
$$
\end{lem}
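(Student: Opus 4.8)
The plan is to deduce the lemma from the standard duality between the trace form and the different, after which the exact power of $\mathfrak{m}_K$ is forced for purely formal reasons.

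First I would recall the characterisation of the inverse different (codifferent): for a finite separable extension $L/K$ of local fields,
$$
\mathfrak{D}_{L/K}^{-1}=\{x\in L:\Tr_{L/K}(x\mathcal{O}_L)\subseteq\mathcal{O}_K\},
$$
and, consequently, for every fractional ideal $\mathfrak{a}$ of $L$ one has $\Tr_{L/K}(\mathfrak{a})\subseteq\mathcal{O}_K$ if and only if $\mathfrak{a}\subseteq\mathfrak{D}_{L/K}^{-1}$ (see \cite{SerreLF}, \S III). The ``only if'' direction is immediate; for the ``if'' direction one uses that $\mathfrak{a}$ is an $\mathcal{O}_L$-module, so $x\in\mathfrak{a}$ forces $x\mathcal{O}_L\subseteq\mathfrak{a}$, hence $\Tr_{L/K}(x\mathcal{O}_L)\subseteq\mathcal{O}_K$.

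Next I would transfer everything to ideals of $K$. Fix uniformisers $\pi_K$ and $\pi_L$. Since $L/K$ is totally ramified of degree $p$ we have $v_L(\pi_K)=p$, so $\pi_K^{-j}\mathfrak{m}_L^n=\mathfrak{m}_L^{\,n-jp}$ for every $j\in\mathbb{Z}$. Using that $\Tr_{L/K}$ is $\mathcal{O}_K$-linear, the containment $\Tr_{L/K}(\mathfrak{m}_L^n)\subseteq\mathfrak{m}_K^j$ is equivalent to $\Tr_{L/K}(\mathfrak{m}_L^{\,n-jp})\subseteq\mathcal{O}_K$, which by the previous step together with $\mathfrak{D}_{L/K}=\mathfrak{m}_L^m$ is equivalent to $\mathfrak{m}_L^{\,n-jp}\subseteq\mathfrak{m}_L^{-m}$, i.e. to $n-jp\geqslant -m$, i.e. to $jp\leqslant m+n$. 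Since $j$ ranges over the integers this says exactly $j\leqslant\lfloor(m+n)/p\rfloor=r$. Hence $\Tr_{L/K}(\mathfrak{m}_L^n)\subseteq\mathfrak{m}_K^r$, while $\Tr_{L/K}(\mathfrak{m}_L^n)\not\subseteq\mathfrak{m}_K^{r+1}$.

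Finally I would upgrade this to an equality. The set $\Tr_{L/K}(\mathfrak{m}_L^n)$ is the image of an $\mathcal{O}_K$-module under an $\mathcal{O}_K$-linear map, hence an $\mathcal{O}_K$-submodule of $K$; since $\mathcal{O}_K$ is a discrete valuation ring, its only submodules of $K$ are $0$, $K$, and the fractional ideals $\mathfrak{m}_K^i$ with $i\in\mathbb{Z}$. Being contained in $\mathfrak{m}_K^r\neq K$ rules out $K$, and not being contained in $\mathfrak{m}_K^{r+1}$ rules out $0$ and all $\mathfrak{m}_K^i$ with $i>r$; so $\Tr_{L/K}(\mathfrak{m}_L^n)=\mathfrak{m}_K^r$, as claimed. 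I do not foresee a genuine obstacle here: the only thing to watch is the clean bookkeeping $v_L(\pi_K)=p$ linking ideals of $L$ and $K$, together with the fact that over a discrete valuation ring an $\mathcal{O}_K$-submodule of $K$ is automatically a fractional ideal.
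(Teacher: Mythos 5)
Your proof is correct and follows essentially the same route as the paper: both reduce the containment $\Tr_{L/K}(\mathfrak{m}_L^n)\subseteq\mathfrak{m}_K^j$ to the ideal containment $\mathfrak{m}_L^n\subseteq\mathfrak{D}_{L/K}^{-1}\mathfrak{m}_K^j=\mathfrak{m}_L^{jp-m}$ via the defining property of the different, and then use that the trace image is a fractional ideal of $\mathcal{O}_K$ to upgrade the sharp bound $j\leqslant r$ to equality. The only (cosmetic) slip is that you swapped the labels ``if'' and ``only if'' in the duality step --- the $\mathcal{O}_L$-module argument you give is the one needed for the direction $\Tr_{L/K}(\mathfrak{a})\subseteq\mathcal{O}_K\Rightarrow\mathfrak{a}\subseteq\mathfrak{D}_{L/K}^{-1}$ --- but the mathematics is all there.
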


\begin{proof}
Since the trace is $K$-linear, it follows that $\Tr_{L/K} (\mathfrak{m}_L^n)$ is an ideal of $\mathcal{O}_K$. Also, by definition of the different we have 
$$
\Tr_{L/K} (\mathfrak{m}_L^n) \subset \mathfrak{m}_K^r \quad \Longleftrightarrow \quad\mathfrak{m}_L^n \subset \mathfrak{D}^{-1}\mathfrak{m}_K^r=\mathfrak{m}_L^{pr-m} \quad \Longleftrightarrow \quad r\leqslant (m+n)/p.
$$
\end{proof}

We now give a necessary and sufficient condition for the norm map to be surjective in the case where the height of the formal group is at least $2$. For this we will need the following lemma, which is essentially the same as Lemma 2 in \cite{SerreLF} \S V.1.

\begin{lem}\label{Filtration}
Let $A,B$ be abelian groups filtered by subgroups $A=A_1\supset A_2\supset\dotsb$, $B=B_1 \supset B_2\supset \dotsb$ such that $A$ is complete with respect to the topology defined by the $A_n$ and $\bigcap B_n=\{0\}$. Let $u:A\rightarrow B$ be a homomorphism and suppose there exist indices $t_1<t_2<\dotsb$ such that $u(A_{t_i})\subset B_i$ and the induced maps $A_{t_i}\rightarrow B_i/B_{i+1}$ are surjective for all $i\geqslant 1$. Then $u$ is surjective.
\end{lem}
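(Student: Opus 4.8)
The plan is to follow the classical successive approximation argument, exactly as in Serre's \emph{Corps Locaux} \S V.1, Lemma 2. We want to show $u : A \to B$ is surjective, so fix an arbitrary $b \in B$ and construct a preimage as a convergent series in $A$.

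First I would observe that since $u(A_{t_1}) \subset B_1 = B$ and $A_{t_1} \to B_1/B_2$ is surjective, given $b \in B$ there exists $a_1 \in A_{t_1}$ with $u(a_1) \equiv b \pmod{B_2}$, i.e.\ $b - u(a_1) \in B_2$. Then I would iterate: supposing we have found $a_1 \in A_{t_1}, \dots, a_n \in A_{t_n}$ such that $b - u(a_1 + \dots + a_n) \in B_{n+1}$, apply surjectivity of $A_{t_{n+1}} \to B_{n+1}/B_{n+2}$ to the class of $b - u(a_1 + \dots + a_n)$ to produce $a_{n+1} \in A_{t_{n+1}}$ with $b - u(a_1 + \dots + a_{n+1}) \in B_{n+2}$. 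Here I use that $u(A_{t_{n+1}}) \subset B_{n+1}$ so the correction stays in the right filtration step and does not spoil the earlier congruences.

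Next I would use completeness of $A$: since $t_1 < t_2 < \dots$, the partial sums $s_n = a_1 + \dots + a_n$ form a Cauchy sequence for the topology defined by the $A_n$ (as $a_{n+1} \in A_{t_{n+1}}$ and $t_{n+1} \to \infty$), hence converge to some $a \in A$. By continuity of $u$ (which is automatic, as $u$ is a homomorphism of filtered groups respecting the filtrations — more precisely $u(s_n) \to u(a)$ since $b - u(s_n) \in B_{n+1}$), we get that $b - u(a)$ lies in $\bigcap_n B_n = \{0\}$, so $u(a) = b$. Since $b$ was arbitrary, $u$ is surjective.

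I do not expect a genuine obstacle here; this is a soft, formal argument and the only points requiring a little care are (a) checking that the new correction term $a_{n+1} \in A_{t_{n+1}}$ does not disturb the congruences already achieved modulo $B_2, \dots, B_{n+1}$ (which follows from $u(A_{t_{n+1}}) \subset B_{n+1} \subset B_j$ for $j \le n+1$), and (b) justifying the passage to the limit, which needs precisely the two stated hypotheses: completeness of $A$ and $\bigcap B_n = \{0\}$. I would present the argument in this order — construct the sequence, take the limit, conclude — and cite \cite{SerreLF} \S V.1 for the provenance of the statement.
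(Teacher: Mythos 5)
Your proof is correct and is essentially the paper's own successive-approximation argument (itself Lemma 2 of \cite{SerreLF} \S V.1): construct $a_n\in A_{t_n}$ with $b-u(a_1+\dots+a_n)\in B_{n+1}$, sum using completeness, and conclude from $\bigcap B_n=\{0\}$. The one step worth tightening is the passage to the limit: the clause ``$u(s_n)\to u(a)$ since $b-u(s_n)\in B_{n+1}$'' only shows $u(s_n)\to b$; to compare with $u(a)$ you should, as the paper does, observe that the tail $a-s_{n-1}=\sum_{i\geqslant n}a_i$ lies in $A_{t_n}$ (each $A_j$ is closed for the filtration topology), whence $u(a)-b=\bigl(u(s_{n-1})-b\bigr)+u(a-s_{n-1})\in B_n$ for every $n$.
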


\begin{proof}
Let $b\in B_1$. Then there exists $a_1\in A_{t_1}$ and $b_2\in B_2$ such that $u(a_1)=b-b_2$. By the same argument there exists $a_2\in A_{t_2}$ and $b_3\in B_3$ such that $u(a_2)=b_2-b_3$. Continuing in this manner we obtain a sequence $a_1+a_2+\dotsb$ which converges to $a\in A$ since $A$ is complete. For each $n>1$ we have that $u(a_1+a_2+\dotsb+a_{n-1})-b=-b_n\in B_n$ and $a'=\sum_{i=n}^\infty a_i\in A_{t_n}$, so it follows that
\begin{align*}
u(a)-b&=u(a_1+\dotsb + a_{n-1}+a')-b\\
          &=u(a_1+\dotsb + a_{n-1})-b+u(a')
\end{align*}
is in $B_n$. We conclude $u(a)-b\in \bigcap B_n={0}$, hence $u(a)=b$.
\end{proof}

\begin{pro}\label{SurjectiveNorm}
Suppose $\mathcal{F}$ has height $h>1$, and let $t=v_L(\sigma(\pi_L)-\pi_L)-1$. Then
$$
N^{\mathcal{F}}_{L/K}(\mathcal{F}(\mathfrak{m}_L)) = \mathcal{F}(\mathfrak{m}_K) \Longleftrightarrow t=1.
$$

Moreover, for $t\geqslant 2$ we have 
$$
f_K \leqslant \dim_{\mathbb{F}_p} \mathcal{F}(\mathfrak{m}_K)/N^{\mathcal{F}}_{L/K}(\mathcal{F}(\mathfrak{m}_L)) \leqslant \min\{f_K(t-1), [K:\mathbb{Q}_p]+h\} 
$$
\end{pro}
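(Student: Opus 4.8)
The plan is to analyse the norm map $N := N^{\mathcal{F}}_{L/K}$ through its effect on the standard filtrations $\mathcal{F}(\mathfrak{m}_L) \supset \mathcal{F}(\mathfrak{m}_L^2) \supset \cdots$ and $\mathcal{F}(\mathfrak{m}_K) \supset \mathcal{F}(\mathfrak{m}_K^2) \supset \cdots$, exploiting Corollary \ref{NormCor}. Since $L/K$ is totally ramified of degree $p$, we have $v_L = p \cdot v_K$ on $K$, and Corollary \ref{NormCor} writes $N(x) \equiv \Tr_{L/K}(x) + \sum_i a_i N_{L/K}(x)^i \pmod{\Tr_{L/K}(x^2\mathcal{O}_L)}$. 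For $x \in \mathcal{F}(\mathfrak{m}_L^n)$ with $n$ large, the leading behaviour is governed by whichever of the trace term or the norm-power terms has smaller valuation: by Lemma \ref{TraceLemma}, $\Tr_{L/K}(\mathfrak{m}_L^n)$ sits in valuation $\lfloor (m+n)/p \rfloor$ in $\mathcal{O}_K$, with $m = (t+1)(p-1)$; meanwhile $N_{L/K}(x)^i$ has $K$-valuation $\ge in/p$ (as $v_K(N_{L/K}(x)) = v_L(x)/1 \cdot \tfrac1p \cdot p$... more carefully $v_K(N_{L/K}(x)) = v_L(x)$ when... let me just say it is $\ge n$ for $i \ge 1$, and since $h > 1$ the only terms with unit coefficient $a_i$ occur at $i = kp^{h-1} \ge p$). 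I would first dispose of the easy implication: if $t = 1$, then $m = 2(p-1)$, and one checks via Lemma \ref{Filtration} — taking $A_j = \mathcal{F}(\mathfrak{m}_L^{?})$ indexed so that $N$ maps onto successive quotients $\mathcal{F}(\mathfrak{m}_K^j)/\mathcal{F}(\mathfrak{m}_K^{j+1}) \cong k$ — that $N$ is surjective, the point being that for $t=1$ the trace term alone already surjects onto each graded piece because $\Tr_{L/K}: \mathfrak{m}_L^{pr - m} \to \mathfrak{m}_K^r$ is surjective onto $\mathfrak{m}_K^r/\mathfrak{m}_K^{r+1}$ for the right choice of $r$.

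For the converse and the quantitative bounds when $t \ge 2$, the strategy is to pin down exactly where the norm map fails to be surjective. The key observation is that $\Tr_{L/K}(\mathfrak{m}_L) = \mathfrak{m}_K^{\lfloor (m+1)/p\rfloor}$, and for $t \ge 2$ this exponent $\lceil (t+1)(p-1)/p \rceil = t$ exceeds $1$, so the trace image misses the bottom $t-1$ graded pieces of $\mathcal{F}(\mathfrak{m}_K)$; the norm-power terms, being of valuation $\ge p^{h-1} \ge p > t$ typically (and in any case $\ge$ something), cannot rescue the very bottom piece. This gives a surjection $\mathcal{F}(\mathfrak{m}_K)/N(\mathcal{F}(\mathfrak{m}_L)) \twoheadrightarrow \mathcal{F}(\mathfrak{m}_K^1)/\mathcal{F}(\mathfrak{m}_K^2) \cong k$, hence $\delta := \dim_{\mathbb{F}_p} \mathcal{F}(\mathfrak{m}_K)/N(\mathcal{F}(\mathfrak{m}_L)) \ge \dim_{\mathbb{F}_p} k = f_K$, which simultaneously proves the lower bound and the nontrivial direction of the equivalence ($t \ge 2 \Rightarrow \delta \ge f_K \ge 1 \Rightarrow N$ not surjective). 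To make this precise I would run Lemma \ref{Filtration} on the restriction of $N$ to $\mathcal{F}(\mathfrak{m}_L^{t})$ landing in $\mathcal{F}(\mathfrak{m}_K^{?})$ and show it surjects onto everything above level $t-1$, using that once we are high enough the trace term dominates and behaves as in the $t=1$ case.

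For the upper bound $\delta \le \min\{f_K(t-1), [K:\mathbb{Q}_p]+h\}$ I would argue two ways and take the minimum. For the bound $f_K(t-1)$: the cokernel is concentrated in $\mathcal{F}(\mathfrak{m}_K)/\mathcal{F}(\mathfrak{m}_K^{t})$ — indeed, once we reach $\mathcal{F}(\mathfrak{m}_K^{t})$, the trace map $\Tr_{L/K}: \mathfrak{m}_L^{n} \to \mathfrak{m}_K^{r}$ with $r = \lfloor (m+n)/p\rfloor$ hits each successive quotient, so the analogue of the $t=1$ surjectivity argument via Lemma \ref{Filtration} shows $N(\mathcal{F}(\mathfrak{m}_L)) \supseteq \mathcal{F}(\mathfrak{m}_K^{t})$; and $\mathcal{F}(\mathfrak{m}_K)/\mathcal{F}(\mathfrak{m}_K^{t})$ has a filtration with $t-1$ graded pieces each isomorphic to $k$, of total $\mathbb{F}_p$-dimension $f_K(t-1)$. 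For the bound $[K:\mathbb{Q}_p]+h$: this is the subtler one and I expect it to be the main obstacle. Here the idea is to compare with the multiplicative group / use a Herbrand-quotient or Euler-characteristic count — $\mathcal{F}(\mathfrak{m}_L)$ with its $G$-action is cohomologically well-behaved, and $\widehat{H}^0(G, \mathcal{F}(\mathfrak{m}_L)) = \mathcal{F}(\mathfrak{m}_K)/N(\mathcal{F}(\mathfrak{m}_L))$ while $\widehat{H}^{-1}(G,\mathcal{F}(\mathfrak{m}_L)) = \ker(N)/(\sigma-1)\mathcal{F}(\mathfrak{m}_L)$; since the Herbrand quotient of $\mathcal{F}(\mathfrak{m}_L)$ equals $1$ (it is a finitely generated $\mathbb{Z}_p$-module up to finite groups, or one invokes that $\mathcal{F}(\mathfrak{m}_L)$ has an open subgroup isomorphic as $G$-module to $\mathcal{O}_L$, whose Herbrand quotient is trivial since $L/K$ is totally ramified), one gets $\#\widehat{H}^0 = \#\widehat{H}^{-1}$, and then bounds $\#\widehat{H}^{-1}$ by analysing $\ker(N)$. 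Explicitly, $\ker N$ contains $\mathcal{F}(\mathfrak{m}_L)[p^\infty] = \mathcal{F}(\mathfrak{m}_L)[p]$ roughly, whose size relates to $h$ and $[K:\mathbb{Q}_p]$ via the structure of $p$-torsion of the formal group — this is where the $h$ and the $[K:\mathbb{Q}_p]$ enter. I would need to be careful to combine the $(\sigma-1)$-image computation (again via the filtration, $(\sigma-1)\mathcal{F}(\mathfrak{m}_L) \supseteq \mathcal{F}(\mathfrak{m}_L^{\text{something}})$) with the torsion count to land at exactly $[K:\mathbb{Q}_p] + h$, and reconciling the Herbrand-quotient route with the direct filtration route is the delicate part of the argument.
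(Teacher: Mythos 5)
Your filtration strategy for the main claim and for the bounds $f_K$ and $f_K(t-1)$ is essentially the paper's proof: one sets $t_i=(i-1)p+t$, uses Lemma \ref{TraceLemma} to get $\Tr_{L/K}(\mathfrak{m}_L^{t_i})=\mathfrak{m}_K^{t+i-1}$ and $\Tr_{L/K}(\mathfrak{m}_L^{t_i+1})=\mathfrak{m}_K^{t+i}$, checks via Corollary \ref{NormCor} and $h>1$ that $N_{\mathcal{F}}(x)\equiv\Tr_{L/K}(x)$ modulo $\mathfrak{m}_K^{t+i}$ on $\mathcal{F}(\mathfrak{m}_L^{t_i})$, and concludes from Lemma \ref{Filtration} that $N^{\mathcal{F}}_{L/K}(\mathcal{F}(\mathfrak{m}_L))\supseteq\mathcal{F}(\mathfrak{m}_K^{t})$; for $t\geqslant 2$ one has $\Tr_{L/K}(\mathfrak{m}_L)\subseteq\mathfrak{m}_K^2$, whence $N^{\mathcal{F}}_{L/K}(\mathcal{F}(\mathfrak{m}_L))\subseteq\mathcal{F}(\mathfrak{m}_K^2)$ and the cokernel surjects onto $\mathcal{F}(\mathfrak{m}_K)/\mathcal{F}(\mathfrak{m}_K^2)\simeq k$. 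All of this you have correctly, modulo tidying the valuation bookkeeping ($v_K(N_{L/K}(x))=v_L(x)$ for a totally ramified degree-$p$ extension, and the unit coefficients $a_i$ occur only at $i=kp^{h-1}\geqslant p$, so the norm-power terms never interfere with the graded pieces in question).

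The genuine gap is the upper bound $[K:\mathbb{Q}_p]+h$, which you flag as the main obstacle and leave unresolved. The Herbrand-quotient route you sketch reduces the problem to bounding $H^1(G,\mathcal{F}(\mathfrak{m}_L))=\ker(N)/(\sigma-1)\mathcal{F}(\mathfrak{m}_L)$, which is not easier than the original question and is not carried out; it is unclear how the torsion of the formal group alone would produce exactly $[K:\mathbb{Q}_p]+h$. The missing idea is much simpler: for $x\in\mathcal{F}(\mathfrak{m}_K)\subseteq\mathcal{F}(\mathfrak{m}_L)$ every Galois conjugate of $x$ equals $x$, so $N^{\mathcal{F}}_{L/K}(x)=F_p(x,\dots,x)=[p]_{\mathcal{F}}(x)$, and hence $p\mathcal{F}(\mathfrak{m}_K)\subseteq N^{\mathcal{F}}_{L/K}(\mathcal{F}(\mathfrak{m}_L))$. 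The cokernel is therefore a quotient of $\mathcal{F}(\mathfrak{m}_K)/p\mathcal{F}(\mathfrak{m}_K)$, and the structure theorem $\mathcal{F}(\mathfrak{m}_K)\simeq\mathbb{Z}_p^{[K:\mathbb{Q}_p]}\times T$ with $\dim_{\mathbb{F}_p}T/pT\leqslant h$ gives the bound immediately. You should replace the cohomological detour by this observation.
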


\begin{proof}

We show first that $\mathcal{F}(\mathfrak{m}^t_L)$ surjects via the norm onto $\mathcal{F}(\mathfrak{m}^t_K)$. Note that by Lemma \ref{TraceLemma} we have $\Tr_{L/K}(\mathfrak{m}_L^t)=\mathfrak{m}_K^t$ and $\Tr_{L/K}(\mathfrak{m}_L^{t+1})=\mathfrak{m}_K^{t+1}$. Hence we may set $t_i=(i-1)p+t$ for $i \geqslant 1$. Then again by the lemma the $t_i$ satisfy

\begin{itemize}
\item[(i)]
$\Tr_{L/K}(\mathfrak{m}_L^{t_i})=\mathfrak{m}_K^{t+i-1}$
\item[(ii)]
$\Tr_{L/K}(\mathfrak{m}_L^{t_i+1})=\mathfrak{m}_K^{t+i}$.
\end{itemize}
Using Corollary \ref{NormCor} and the fact that $h>1$ we obtain
$$
N_\mathcal{F}(x)\equiv \Tr_{L/K}(x) \mod \mathfrak{m}_K^{t+i}
$$
for all $x$ in $\mathcal{F}(\mathfrak{m}_L^{t_i})$. It follows that the induced maps $\mathcal{F}(\mathfrak{m}_L^{t_i}) \rightarrow \mathcal{F}(\mathfrak{m}_K^{t+i-1})/\mathcal{F}(\mathfrak{m}_K^{t+i})$ are surjective for all $i\geqslant 1$, and the claim follows from Lemma \ref{Filtration}.

From this we immediately obtain that the norm map is surjective for $t=1$. Now suppose that $t\geqslant 2$. Then keeping the above notation we have $m\geqslant 3p-3$ and hence by Lemma \ref{TraceLemma} we have $\Tr_{L/K}(\mathfrak{m}_L)\subset \mathfrak{m}_K^2$. Let $x\in \mathcal{F}(\mathfrak{m}_L)$. Then by Corollary \ref{NormCor} and using that $h>1$ we conclude $N_\mathcal{F}(x) \in \mathcal{F}(\mathfrak{m}_K^2)$, hence $N^{\mathcal{F}}_{L/K}(\mathcal{F}(\mathfrak{m}_L)) \subset \mathcal{F}(\mathfrak{m}_K^2)$. It follows then that
$$
\dim_{\mathbb{F}_p} \mathcal{F}(\mathfrak{m}_K)/\mathcal{F}(\mathfrak{m}_K^2) \leqslant \dim_{\mathbb{F}_p} \mathcal{F}(\mathfrak{m}_K)/N^{\mathcal{F}}_{L/K}(\mathcal{F}(\mathfrak{m}_L)).
$$
But 
$$
\mathcal{F}(\mathfrak{m}_K)/\mathcal{F}(\mathfrak{m}_K^2) \simeq \mathfrak{m}_K/\mathfrak{m}_K^2,
$$
which is a vector space over the residue field $k$ of degree $1$, hence the left hand side of the inequality. For the right hand side, note that from the claim it follows that $N^{\mathcal{F}}_{L/K}(\mathcal{F}(\mathfrak{m}_L)) \supset \mathcal{F}(\mathfrak{m}_K^t)$ and hence
$$
\dim_{\mathbb{F}_p} \mathcal{F}(\mathfrak{m}_K)/N^{\mathcal{F}}_{L/K}(\mathcal{F}(\mathfrak{m}_L)) \leqslant \dim_{\mathbb{F}_p} \mathcal{F}(\mathfrak{m}_K)/\mathcal{F}(\mathfrak{m}_K^t)
$$
which has $\mathbb{F}_p$-dimension $f_K(t-1)$ by the same argument as before. Finally, recall that for $K$ a finite extension of $\mathbb{Q}_p$, we have an isomorphism of abelian groups
$$
\mathcal{F}(\mathfrak{m}_K) \simeq \mathbb{Z}_p^{[K:\mathbb{Q}_p]}\times T
$$
where $T$ is a finite group such that $T/pT$ has $\mathbb{F}_p$-dimension at most $h$. The result then follows from the fact that $p\mathcal{F}(\mathfrak{m}_K) \subset N^{\mathcal{F}}_{L/K}(\mathcal{F}(\mathfrak{m}_L))$.
\end{proof}

Note that since $G$ is a cyclic, $\mathcal{F}(\mathfrak{m}_K)/N^{\mathcal{F}}_{L/K}(\mathcal{F}(\mathfrak{m}_L))$ is equal to $H^2(G,\mathcal{F}(\mathfrak{m}_L))$. We now end this section with a result which will be useful when computing Selmer ranks of elliptic curves.

\begin{pro}\label{herb}
Suppose $\mathcal{F}$ and $L/K$ are as in this section. Then 
$$
H^1(G,\mathcal{F}(\mathfrak{m}_L)) \simeq H^2(G,\mathcal{F}(\mathfrak{m}_L)).
$$
\end{pro}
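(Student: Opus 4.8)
The plan is to prove the stronger statement that the Herbrand quotient of $A:=\mathcal{F}(\mathfrak{m}_L)$ is trivial, where for a $\mathbb{Z}_p[G]$-module $A$ with finite cohomology I write $h(A)=\#H^2(G,A)/\#H^1(G,A)$; this is the usual Herbrand quotient, since $G$ is cyclic of order $p$ and so its cohomology is periodic of period $2$ (so that $H^2(G,A)\simeq \widehat H^0(G,A)$ and $H^1(G,A)=\widehat H^1(G,A)$). This suffices: for $G$ of order $p$, both $H^1(G,A)$ and $H^2(G,A)$ are finitely generated over $\mathbb{Z}_p$ and killed by $p$, hence finite-dimensional $\mathbb{F}_p$-vector spaces, so once we know $\#H^1(G,A)=\#H^2(G,A)$ they are isomorphic. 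Throughout I will use that $h$ is multiplicative on short exact sequences, that $h(A)=1$ for $A$ finite, and consequently that $h(A)=h(B)$ whenever there is a $G$-homomorphism $A\to B$ with finite kernel and cokernel.

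First I would reduce to $\mathcal{O}_L$. The subgroups $\mathcal{F}(\mathfrak{m}_L^n)$ are $G$-stable, because every $\sigma\in G$ preserves $v_L$, and they have finite index in $\mathcal{F}(\mathfrak{m}_L)$, since the successive quotients $\mathcal{F}(\mathfrak{m}_L^i)/\mathcal{F}(\mathfrak{m}_L^{i+1})\simeq\mathfrak{m}_L^i/\mathfrak{m}_L^{i+1}$ are finite. Hence $h(\mathcal{F}(\mathfrak{m}_L))=h(\mathcal{F}(\mathfrak{m}_L^n))$ for every $n$. For $n$ sufficiently large the formal logarithm of $\mathcal{F}$ converges on $\mathfrak{m}_L^n$ and induces an isomorphism of abelian groups $\mathcal{F}(\mathfrak{m}_L^n)\xrightarrow{\sim}\mathfrak{m}_L^n$; since its coefficients lie in $K$, which is fixed by $G$, this isomorphism is $G$-equivariant. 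As $\mathfrak{m}_L^n$ has finite index in $\mathcal{O}_L$, it follows that $h(\mathcal{F}(\mathfrak{m}_L))=h(\mathcal{O}_L)$.

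Then it remains to show $h(\mathcal{O}_L)=1$. By the normal basis theorem $L\simeq K[G]$ as $K[G]$-modules, and choosing a $\mathbb{Q}_p$-basis of $K$ shows $L\simeq\mathbb{Q}_p[G]^{\oplus d}$ as $\mathbb{Q}_p[G]$-modules, with $d=[K:\mathbb{Q}_p]$. Fixing such an isomorphism $\varphi$, the modules $\varphi(\mathcal{O}_L)$ and $\mathbb{Z}_p[G]^{\oplus d}$ are two $\mathbb{Z}_p[G]$-lattices in $\mathbb{Q}_p[G]^{\oplus d}$, so their intersection has finite index in each; therefore $h(\mathcal{O}_L)=h(\varphi(\mathcal{O}_L))=h(\mathbb{Z}_p[G]^{\oplus d})=h(\mathbb{Z}_p[G])^{d}=1$, the last equality because $\mathbb{Z}_p[G]$ is free, hence cohomologically trivial. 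Combining the two reductions gives $h(\mathcal{F}(\mathfrak{m}_L))=1$, which completes the argument.

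I do not expect a serious obstacle here: this is a standard Herbrand-quotient computation, and no hypothesis on the height of $\mathcal{F}$ is needed (nor, really, that $L/K$ is totally ramified, only Galois of degree $p$). The one point that requires care is keeping $G$-equivariance intact all the way from $\mathcal{F}(\mathfrak{m}_L)$, through the formal logarithm, to $\mathcal{O}_L$ and finally to its rational representation — which is precisely why it matters that the formal logarithm has coefficients in the fixed field $K$ — together with invoking the two standard facts that the Herbrand quotient is unchanged by finite kernels and cokernels and that commensurable $\mathbb{Z}_p[G]$-lattices have the same Herbrand quotient.
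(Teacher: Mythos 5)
Your proof is correct and follows essentially the same route as the paper: reduce to showing the Herbrand quotient is $1$, pass to $\mathcal{F}(\mathfrak{m}_L^n)$ for large $n$, use the $G$-equivariant formal logarithm (equivariant because $\mathcal{F}$ is defined over the fixed field $K$) to land in a finite-index $G$-submodule of $\mathcal{O}_L$, and conclude $h_G(\mathcal{O}_L)=1$ from the normal basis theorem. Your last step is in fact slightly more careful than the paper's: since $L/K$ is wildly ramified, $\mathcal{O}_L$ is not literally free over $\mathcal{O}_K[G]$ (Noether's criterion), so the commensurable-lattice argument you give --- rather than the bare assertion that $\mathcal{O}_L$ is an induced $G$-module --- is the right way to justify that its Herbrand quotient is trivial.
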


\begin{proof}
Because both groups are $\mathbb{F}_p$-vector spaces, it suffices to show that the Herbrand quotient
$$
h_G(\mathcal{F}(\mathfrak{m}_{L}))=\frac{\# H^2(G,\mathcal{F}(\mathfrak{m}_{L}))}{\# H^1(G,\mathcal{F}(\mathfrak{m}_{L}))}
$$
is equal to 1.
In order to show this, recall that for large enough $n$ the formal logarithm induces an isomorphism $\mathcal{F}(\mathfrak{m}_{L}^n)\overset{\sim}{\longrightarrow} \widehat{\mathbb{G}}_a(\mathfrak{m}_{L}^n)$ with the additive formal group on $\mathfrak{m}_{L}$. This isomorphism is $G$-equivariant since $\mathcal{F}$ is defined over $K$. Thus we have a short exact sequence of $G$-modules
$$
0\longrightarrow \widehat{\mathbb{G}}_a(\mathfrak{m}_{L}^n) \longrightarrow \mathcal{F}(\mathfrak{m}_{L}) \longrightarrow C \longrightarrow 0
$$
where $C$ is finite. By a well-known property (see for example \cite{SerreLF}, \S VIII.4) of the Herbrand quotient one has 
$$
h_G\big(\mathcal{F}(\mathfrak{m}_{L})\big)=h_G\big(\widehat{\mathbb{G}}_a(\mathfrak{m}_{L}^n)\big)h_G(C).
$$
Finally, we know that for $n$ divisible by $p$ we have $\widehat{\mathbb{G}}_a(\mathfrak{m}_{L}^n) \simeq \mathcal{O}_{L}$ as $G$-modules and $\mathcal{O}_{L}$ is an induced $G$-module, hence $h_G\big(\widehat{\mathbb{G}}_a(\mathfrak{m}_{L}^n)\big)=1$. Also $h_G(C)=1$ as $C$ is finite. We conclude then that $h_G(\mathcal{F}(\mathfrak{m}_{L}))=1$, and this completes the proof.
\end{proof}

\section{Twisted Selmer groups of elliptic curves}

Let $K$ be a number field and $p$ an odd prime. For $E$ an elliptic curve defined over $K$, recall that $E[p]$ is a 2-dimensional $\mathbb{F}_p$-vector space with a continuous $G_K$-action and with a perfect, skew-symmetric, $G_K$-equivariant self-duality
$$
E[p] \times E[p] \longrightarrow \mathbb{\mu}_p.
$$
A \emph{Selmer structure} $\mathcal{F}$ on $E[p]$ is a collection of subspaces $H^1_\mathcal{F}(K_v,E[p])$ for every prime $v$ of $K$ such that for all but finitely many $v$ we have $H^1_\mathcal{F}(K_v,E[p])=H^1(K_v^{\mathrm{ur}}/K,E[p]^{I_v})$ where $K_v^{\mathrm{ur}}$ denotes the maximal unramified extension of $K_v$ and $I_v$ is the inertia group. Given Selmer structures $\mathcal{F}$ and $\mathcal{G}$ on $E[p]$, define Selmer structures $\mathcal{F}+\mathcal{G}$ and $\mathcal{F}\cap\mathcal{G}$ by
\begin{align*}
H^1_{\mathcal{F}+\mathcal{G}}(K_v,E[p]) &= H^1_\mathcal{F}(K_v,E[p]) + H^1_\mathcal{G}(K_v,E[p])\\
H^1_{\mathcal{F}\cap\mathcal{G}}(K_v,E[p]) &= H^1_\mathcal{F}(K_v,E[p])\cap H^1_\mathcal{G}(K_v,E[p]).
\end{align*}
Given a Selmer structure	$\mathcal{F}$ on $E[p]$ define the Selmer group by
$$
H^1_\mathcal{F}(K,E[p]) = \ker\Big(H^1(K,E[p]) \longrightarrow \prod_v H^1(K_v,E[p])/H^1_\mathcal{F}(K_v,E[p])\Big).
$$

\begin{exa}
For each $v$ in $K$ let $H^1_\mathcal{F}(K_v,E[p]) = \mathrm{im}(\lambda_{E,v})$, where 
$$
\lambda_{E,v} : E(K_v)/pE(K_v) \longrightarrow H^1(K_v,E[p])
$$
is the Kummer map. Then by Lemma 19.3 of \cite{Cassels}, $H^1_\mathcal{F}(K_v,E[p]) = H^1(K_v^{\mathrm{ur}}/K,E[p])$ for all $v$ such that $v \nmid p$ and $E$ has good reduction at $v$. With this Selmer structure given on $E[p]$ we have that $H^1_\mathcal{F}(K,E[p]) = \Sel_p(E/K)$ is the usual $p$-Selmer group as defined in section 1.
\end{exa}

Let $L/K$ be a Galois extension of number fields of degree $p$ with Galois group $G$ such that $G$ is generated by $\sigma$. Denote by $\Res{L}{K}{E}$ the Weil restriction of scalars of $E$ from $L$ to $K$. Then for every $K$-algebra $X$ there is an isomorphism, functorial in $X$,
\begin{equation}\label{ResIsom}
(\Res{L}{K}{E})(X) \simeq E(X\otimes_K L).
\end{equation}
The action of $G$ on $E(X\otimes_K L)$ induces a canonical inclusion $\mathbb{Z}[G] \hookrightarrow \End_K(\Res{L}{K}{E})$ (see for instance section 4 of \cite{MRS}).
Let $\mathcal{O}$ be the ring of integers of the cyclotomic field of $p$th roots of unity, and $\mathfrak{p}$ the maximal ideal of $\mathcal{O}$. Let $A$ be the twist of $E$ by the irreducible rational representation of $G$ corresponding to $L$. Hence $A$ is the abelian variety denoted $E_L$ in Definition 5.1 of \cite{MRS}. If we let
$$
N_{L/K} = \sum_{g\in G} g \in \mathbb{Z}[G]
$$
then it follows from Proposition 4.2 in \cite{MRS} that
$$
E = \ker(\sigma - 1) \subset \Res{L}{K}{E}, \quad A = \ker (N_{L/K}) \subset \Res{L}{K}{E}.
$$
Hence, $A$ is an abelian variety of dimension $p-1$ over $K$, and it is the kernel of the map $N_{L/K}: \Res{L}{K}{E} \rightarrow E$.
We also have from Theorem 3.4 of \cite{MazurRubin} that the following holds:
\begin{itemize}
\item[(i)]
The inclusion $\mathbb{Z}[G]\hookrightarrow \End_K(\Res{L}{K}{E})$ induces a ring homomorphism $\mathbb{Z}[G]\rightarrow \End_K(A)$ that factors
$$
\mathbb{Z}[G] \twoheadrightarrow \mathcal{O} \hookrightarrow \End_K(A)
$$
where the first map is induced by the projection in (3.2) of \cite{MazurRubin}.
\item[(ii)]
For every commutative $K$-algebra $X$, the isomorphism of (\ref{ResIsom}) restricts to an isomorphism, functorial in $X$,
$$
A(X) \simeq \{ x\in E(X\otimes_K L) : \sum_{g\in G} (1\otimes g)(x) = 0 \}.
$$
\end{itemize}

\begin{pro}\label{IsomOfTorsion}
There is a canonical $G_K$-isomorphism $A[\mathfrak{p}] \xrightarrow{\sim} E[p]$.
\end{pro}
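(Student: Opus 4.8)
The plan is to identify $A[\mathfrak{p}]$ and $E[p]$ as literally the same subgroup scheme of $\Res{L}{K}{E}$, so that the asserted isomorphism becomes an equality of $G_K$-modules after passing to $\overline{K}$-points.

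First I would pin down how the ideal $\mathfrak{p}$ acts on $A$. By item (i) above, the $\mathbb{Z}[G]$-action on $A$ factors through a surjection $\mathbb{Z}[G] \twoheadrightarrow \mathcal{O}$ sending the generator $\sigma$ to a primitive $p$-th root of unity $\zeta$ (one may as well take $\zeta=\zeta_p$), with kernel the ideal generated by $N_{L/K}=\sum_{g\in G}g$. Since $p$ is totally ramified in $\mathbb{Q}(\zeta_p)$, the maximal ideal $\mathfrak{p}$ is the unique prime of $\mathcal{O}$ above $p$ and is generated by $\zeta-1$. Hence $\mathfrak{p}$ acts on $A$ exactly as $\sigma-1\in\mathbb{Z}[G]$ does, so that
\[
A[\mathfrak{p}] \;=\; \ker\!\big(\sigma-1\colon A\to A\big) \;=\; A\cap\ker\!\big(\sigma-1\colon \Res{L}{K}{E}\to \Res{L}{K}{E}\big) \;=\; A\cap E,
\]
where the last equality is the identification $E=\ker(\sigma-1)$ inside $\Res{L}{K}{E}$ recalled above.

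Next I would compute $A\cap E=\ker(N_{L/K})\cap E$. On the subgroup scheme $E=\ker(\sigma-1)$ every power $\sigma^{i}$ acts as the identity, so $N_{L/K}=\sum_{i=0}^{p-1}\sigma^{i}$ restricts to multiplication by $p$ on $E$; consequently $\ker(N_{L/K})\cap E=E[p]$. Combining this with the previous display gives $A[\mathfrak{p}]=E[p]$ as subgroup schemes of $\Res{L}{K}{E}$, and since $\Res{L}{K}{E}$, its subgroup schemes $E$ and $A$, the $\mathbb{Z}[G]$-action, and $N_{L/K}$ are all defined over $K$, passing to $\overline{K}$-points produces a canonical $G_K$-equivariant identification $A[\mathfrak{p}]\xrightarrow{\sim}E[p]$, as required. (Alternatively, one can argue on points using item (ii): taking $X=\overline{K}$ and using that $L\otimes_K\overline{K}$ is a product of $p$ copies of $\overline{K}$ on which $G$ permutes the factors regularly, $A(\overline{K})$ becomes the submodule of $E(\overline{K})^{p}$ on which the sum of the coordinates vanishes, with $\mathcal{O}$ acting by cyclic permutation of coordinates; killing $\sigma-1$ forces all coordinates to coincide and so cuts out the diagonal copy of $E[p]$, compatibly with $G_K$.)

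I do not expect a real obstacle here. The two points that need care are: (a) matching the abstract isomorphism $\mathbb{Z}[G]/(N_{L/K})\cong\mathcal{O}$ supplied by item (i) with the geometric $\mathcal{O}$-action, so that $\mathfrak{p}$ corresponds to the ideal generated by $\sigma-1$ — which is precisely item (i) together with the total ramification of $p$ in $\mathbb{Q}(\zeta_p)$; and (b) observing that every construction in sight descends to $K$, which makes the $G_K$-equivariance of the final identification automatic.
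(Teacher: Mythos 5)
Your argument is correct. Where the paper disposes of this statement by citing Proposition 4.1 of Mazur--Rubin, you supply a direct, self-contained proof, and the two key reductions you make are both sound: since the $\mathcal{O}$-action on $A$ is the one induced by restricting the $\mathbb{Z}[G]$-action on $\Res{L}{K}{E}$, with $\sigma\mapsto\zeta_p$ and $\mathfrak{p}=(\zeta_p-1)$ by total ramification of $p$ in $\mathbb{Q}(\zeta_p)$, one indeed gets $A[\mathfrak{p}]=\ker(\sigma-1|_A)=A\cap E$; and since $\sigma$ acts trivially on $E=\ker(\sigma-1)$, the element $N_{L/K}$ restricts to multiplication by $p$ there, giving $A\cap E=\ker(N_{L/K})\cap E=E[p]$. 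Everything in sight is defined over $K$, so the resulting identification is $G_K$-equivariant and canonical (it is the identity on a common subgroup of $\Res{L}{K}{E}$), and your alternative verification on $\overline{K}$-points via $L\otimes_K\overline{K}\simeq\prod_{g\in G}\overline{K}$ confirms the same computation concretely. The only gain of the paper's route is brevity; the gain of yours is that the reader sees exactly why the isomorphism exists and why it is canonical, using nothing beyond the two facts (i) and (ii) already quoted from Mazur--Rubin--Silverberg.
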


\begin{proof}
This is Proposition 4.1 of \cite{MazurRubin}.
\end{proof}

Using Proposition \ref{IsomOfTorsion} we may define another Selmer structure $\mathcal{A}$ on $E[p]$ as follows. Let $\pi$ be a generator for $\mathfrak{p}$. For every place $v$ of $K$ let 
$$
\lambda_{A,v}:A(K_v)/\pi A(K_v) \hookrightarrow H^1(K_v,A[\mathfrak{p}]) 
$$
denote the Kummer map and set $H^1_{\mathcal{A}}(K_v,E[p])$ to be the image of $\lambda_{A,v}$ composed with the isomorphism of Proposition \ref{IsomOfTorsion}. For each place $w$ of $L$, let $H^1_{\mathcal{F'}}(L_w,E[p])$ be the image of the Kummer map
$$
E(L_w)/pE(L_w) \hookrightarrow H^1(L_w,E[p]).
$$

\begin{lem}\label{resKernel}
Let $v$ be a prime of $K$, and $w$ a prime of $L$ above $v$. If $\res$ denotes the restriction map 
$$
H^1(K_v,E[p]) \longrightarrow H^1(L_w,E[p])
$$ 
then we have that $\ker (\res) \subset H^1_{\mathcal{F}+\mathcal{A}}(K_v,E[p])$.
\end{lem}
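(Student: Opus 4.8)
If $v$ splits in $L/K$ then, since $[L:K]=p$ is prime, $v$ splits completely, so $L_w=K_v$, the map $\res$ is the identity and $\ker(\res)=0\subset H^1_{\mathcal{F}+\mathcal{A}}(K_v,E[p])$. So assume $v$ is not split in $L/K$; then there is a unique prime $w$ above $v$ and $\Gal(L_w/K_v)\simeq G$. I will work with the base change $W:=(\Res{L}{K}{E})\times_K K_v=\Res{L_w}{K_v}{E}$. Recall that $W(K_v)=E(L_w)$, that $W[p]\simeq\mathrm{Ind}_{G_{L_w}}^{G_{K_v}}E[p]$, and that Shapiro's lemma gives canonical isomorphisms $H^i(K_v,W[p])\simeq H^i(L_w,E[p])$ under which $\res\colon H^i(K_v,E[p])\to H^i(L_w,E[p])$ corresponds to $i_*$, where $i\colon E\hookrightarrow W$ is the canonical inclusion, and the Kummer map of $W/K_v$ corresponds to that of $E/L_w$. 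Three further standard facts about the Weil restriction are needed: (a) $i$ restricts to a $G_{K_v}$-isomorphism $\theta\colon E[p]\xrightarrow{\sim}A[\mathfrak{p}]=A\cap E\subset W$, which is the inverse of the isomorphism of Proposition~\ref{IsomOfTorsion}; (b) the norm isogeny $\nu\colon W\to E$ (with $\ker\nu=A$) satisfies $\nu\circ i=[p]_E$; (c) consequently $0\to A[p]\xrightarrow{\iota_A}W[p]\xrightarrow{\nu}E[p]\to 0$ is exact, surjectivity of $\nu$ on $p$-torsion following from (b) together with the $p$-divisibility of $A(\overline{K_v})$.

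Now take $c\in\ker(\res)$, so $i_*(c)=0$ in $H^1(K_v,W[p])$. Since $i|_{E[p]}=\theta$ factors as $E[p]\xrightarrow{\theta}A[\mathfrak{p}]\xrightarrow{j}A[p]\xrightarrow{\iota_A}W[p]$, we get $(\iota_A)_*(j_*(\theta_*(c)))=0$, so by the exact sequence (c) there is $e\in E(K_v)[p]=H^0(K_v,E[p])$ with
\[
j_*(\theta_*(c))=\partial(e),
\]
$\partial$ the connecting homomorphism. The crux is to identify $\partial(e)$, and the plan is to use $i$ as a quasi-section: choose $R\in E(\overline{K_v})$ with $pR=e$ and $a_0\in A(\overline{K_v})$ with $pa_0=i(e)$ (possible since $i(e)\in A(\overline{K_v})$ is divisible), and set $\widetilde e:=i(R)-a_0$. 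Then $p\widetilde e=i(e)-i(e)=0$ and $\nu(\widetilde e)=pR-0=e$, so $\widetilde e$ is a $p$-torsion lift of $e$ and $\partial(e)$ is represented by $\gamma\mapsto\gamma\widetilde e-\widetilde e$. Using $G_{K_v}$-equivariance of $i$, this cocycle equals $\gamma\mapsto i(\gamma R-R)-(\gamma a_0-a_0)$; since $e\in E(K_v)$ and $i(e)\in A(K_v)$, both $\gamma R-R$ and $\gamma a_0-a_0$ are $p$-torsion, and $\gamma\mapsto\gamma R-R$ represents $\lambda_{E,v}(e)\in H^1(K_v,E[p])$ while $\gamma\mapsto\gamma a_0-a_0$ represents the $p$-descent class $\delta_A(i(e))$ of the point $i(e)\in A(K_v)$ in $H^1(K_v,A[p])$. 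As $i(\gamma R-R)\in A[\mathfrak{p}]$, one concludes that in $H^1(K_v,A[p])$
\[
\partial(e)=j_*\!\big(\theta_*(\lambda_{E,v}(e))\big)-\delta_A(i(e)).
\]

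Combining the two displays, $j_*\big(\theta_*(c)-\theta_*(\lambda_{E,v}(e))\big)=-\delta_A(i(e))$, and the right side lies in $\mathrm{im}(\delta_A)=\ker\!\big(H^1(K_v,A[p])\to H^1(K_v,A)\big)$. Since the natural map $H^1(K_v,A[\mathfrak{p}])\to H^1(K_v,A)$ factors through $j_*$ and has kernel exactly $\mathrm{im}(\lambda_{A,v})$, it follows that $\theta_*(c)-\theta_*(\lambda_{E,v}(e))\in\mathrm{im}(\lambda_{A,v})$; applying $\theta_*^{-1}$ and using that $H^1_{\mathcal{A}}(K_v,E[p])=\theta_*^{-1}(\mathrm{im}(\lambda_{A,v}))$ by definition, we get $c-\lambda_{E,v}(e)\in H^1_{\mathcal{A}}(K_v,E[p])$. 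As $\lambda_{E,v}(e)\in H^1_{\mathcal{F}}(K_v,E[p])$ this gives $c\in H^1_{\mathcal{F}}(K_v,E[p])+H^1_{\mathcal{A}}(K_v,E[p])=H^1_{\mathcal{F}+\mathcal{A}}(K_v,E[p])$, as required. (The same argument, starting from the weaker hypothesis $\res(c)\in H^1_{\mathcal{F'}}(L_w,E[p])$ and first subtracting an $H^1_{\mathcal{F}}$-class so that the corresponding point of $E(L_w)$ has trivial norm, gives the sharper equality $\res^{-1}(H^1_{\mathcal{F'}}(L_w,E[p]))=H^1_{\mathcal{F}+\mathcal{A}}(K_v,E[p])$.)

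The step I expect to be the main obstacle is the explicit identification of $\partial(e)$ in the third paragraph: one must choose the lift $\widetilde e$ cleverly enough that $\partial(e)$ visibly decomposes into an ``$\mathcal{F}$-part'' (the Kummer class of a point of $E(K_v)$) and an ``$\mathcal{A}$-part'' (a $p$-descent class of a point of $A(K_v)$). The remaining ingredients — the facts (a)--(c) about $\Res{L}{K}{E}$, the compatibility of Shapiro's isomorphism with $\res$ and with the Kummer maps, and the identification of the local conditions $H^1_{\mathcal{F}}$ and $H^1_{\mathcal{A}}$ with the kernels of $H^1(K_v,-)\to H^1(K_v,E)$ and $H^1(K_v,-)\to H^1(K_v,A)$ respectively — are standard but must be assembled with care.
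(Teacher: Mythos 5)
Your proof is correct, but it runs along a genuinely different (complementary) track from the paper's. The paper identifies $\ker(\res)$ with the image of the inflation map via the short exact sequence $0\to E[p]\to(\Res{L}{K}{E})[p]\xrightarrow{\sigma-1}A[p]\to 0$, so that $\ker(\res)$ is parametrized by classes of points $P\in A(K_v)[p]$; the explicit lift $Q=Q_1-Q_2$ (a $\pi$-division point of $P$ in $A$ minus a $p$-division point of $\alpha\pi^{p-2}P$ in $E$, using $\alpha\pi^{p-1}=p$) then exhibits each such class directly as $\lambda_{A,v}(P)-\lambda_{E,v}(\alpha\pi^{p-2}P)$. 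You instead use the complementary sequence $0\to A[p]\to W[p]\xrightarrow{\nu}E[p]\to 0$ and its connecting homomorphism, parametrizing (the image in $H^1(K_v,A[p])$ of) a class $c\in\ker(\res)$ by a point $e\in E(K_v)[p]$, and your lift $\widetilde e=i(R)-a_0$ plays exactly the role of the paper's $Q$, yielding the same kind of decomposition $\partial(e)=j_*\theta_*(\lambda_{E,v}(e))-\delta_A(i(e))$. The two computations are mirror images of one another; yours costs an extra passage through $H^1(K_v,A[p])$ and the factorization of $H^1(K_v,A[\mathfrak{p}])\to H^1(K_v,A)$ through $j_*$ (all of which you handle correctly), while the paper's costs the identification of the first term of inflation--restriction with $A(K_v)[p]/(\sigma-1)\big(W(K_v)[p]\big)$. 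A pleasant by-product of your route is that it does not need to name the unit $\alpha$ with $\alpha\pi^{p-1}=p$: the divisibility of $A(\overline{K_v})$ substitutes for the explicit $\pi^{p-2}$-twist. Your closing parenthetical is also right: the same mechanism, applied after first subtracting a Kummer class so that the relevant point of $E(L_w)$ has trivial norm, recovers Proposition \ref{resStruct}.
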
 

\begin{proof}
Note that there is nothing to prove if $v$ splits completely in $L$, so suppose there is one prime $w$ of $L$ above $v$. It is equivalent to show that the image of the inflation map 
$$
H^1(L_w/K_v, E(L_w)[p]) \longrightarrow H^1(K_v,E[p])
$$
is contained in $H^1_{\mathcal{F}+\mathcal{A}}(K_v,E[p])$. The short exact sequence
$$
0\longrightarrow E[p] \longrightarrow (\Res{L}{K}{E})[p] \xrightarrow{\sigma-1} A[p] \longrightarrow 0
$$
induces the exact sequence in cohomology
\begin{equation}\label{ResCohom}
0 \longrightarrow \frac{A(K_v)[p]}{(\sigma-1)\big((\Res{L}{K}{E})(K_v)[p]\big)}  \xrightarrow{\ f\ } H^1(K_v,E[p]) \xrightarrow{\ g\ } H^1(K_v,(\Res{L}{K}{E})[p]).
\end{equation}
Note that $(\Res{L}{K}{E})(K_v) \simeq E(K_v\otimes L) = E(L_w)$, hence we may identify $A(K_v)$ with the elements $x\in E(L_w)$ such that $ N_{L_w/K_v}(x)=0\}$. We have then isomorphisms
\begin{align*}
\frac{A(K_v)[p]}{(\sigma-1)\big((\Res{L}{K}{E})(K_v)[p]\big)} & \simeq H^1(L_w/K_v,E(L_w)[p]), \\
H^1(K_v,(\Res{L}{K}{E})[p]) &\simeq H^1(L_w,E[p]) 
\end{align*}
where the first isomorphism follows from $G$ being cyclic and the second one is by Shapiro's lemma (see for instance the proof of Proposition 3.1 in \cite{MazurRubin}). Using these identifications one sees that indeed the maps $f$ and $g$ in (\ref{ResCohom}) are the usual inflation and restriction maps. We now show that the image of $f$ is contained in $H^1_{\mathcal{F}+\mathcal{A}}(K_v,E[p])$. To this end, let $\alpha\in\mathcal{O}^\times$ be such that $\alpha \pi^{p-1}=p$. Let $P \in A(K_v)[p]$, and let $Q_1\in A(\overline{K_v}) $ be such that $\pi(Q_1)=P$. Set $P'=\alpha\pi^{p-2}P$. Then $P'\in A(K_v)$ hence $(\sigma-1)$ acts as $\pi$ on it, so we obtain
$$
(\sigma-1)P' = \pi(P')=pP = 0
$$
and it follows that $P'\in E(K_v)$. Now let $Q_2 \in E(\overline{K_v})$ such that $pQ_2 = P'$. Finally, set $Q=Q_1-Q_2 \in (\Res{L}{K}{E})(\overline{K_v})[p]$. Then as $Q_2\in E$ we have that $(\sigma-1)(Q_2)=0$ and so $(\sigma -1)(Q) = P$. It follows that the image of $P$ under $f$ is represented by the cocycle $\{Q^g-Q\} = \{(Q_1^g-Q_1)\ - (Q_2^g-Q_2)\}$. This cocycle also represents $\lambda_{A,v}(P)-\lambda_{E,v}(\alpha\pi^{p-2}P)$ which is contained in $H^1_{\mathcal{F}+\mathcal{A}}(K_v,E[p])$ and this completes the proof. 
\end{proof}

\begin{pro}\label{resStruct}
Let $v$ be a prime of $K$ and $w$ a prime of $L$ above $v$. Then 
$$
\res^{-1}\big(H^1_{\mathcal{F'}}(L_w,E[p])\big) = H^1_{\mathcal{F}+\mathcal{A}}(K_v,E[p])
$$
\end{pro}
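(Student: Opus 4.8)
The plan is to prove the two inclusions separately, using Lemma~\ref{resKernel} to handle the kernel of restriction on both sides.

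First I would establish the inclusion $\res^{-1}\big(H^1_{\mathcal{F'}}(L_w,E[p])\big) \subseteq H^1_{\mathcal{F}+\mathcal{A}}(K_v,E[p])$. Let $c \in H^1(K_v,E[p])$ with $\res(c) \in H^1_{\mathcal{F'}}(L_w,E[p]) = \mathrm{im}(\lambda_{E,v}^{L_w})$, so $\res(c) = \lambda_{E,v}^{L_w}(x)$ for some $x \in E(L_w)/pE(L_w)$. The idea is to split $x$ according to the $G$-module structure: since $G$ is cyclic of order $p$ acting on $E(L_w)$ and $p$ is odd, multiplication by $p$ on $E(L_w)$ differs from $N_{L_w/K_v}$ on the invariants by the endomorphism structure coming from $\mathcal{O}$ acting on $A$. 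Concretely, I would write $E(L_w) = (\Res{L}{K}{E})(K_v)$ and use the decomposition of $\mathbb{Z}[G] \otimes \mathbb{Z}_p$ (or rather the maps $\mathbb{Z}[G] \twoheadrightarrow \mathbb{Z}$ and $\mathbb{Z}[G] \twoheadrightarrow \mathcal{O}$) to split $x$ up to $p$-divisible elements into a piece coming from $E(K_v)$ (the $N_{L/K}$-part, contributing to $H^1_\mathcal{F}$) and a piece coming from $A(K_v)$ (the kernel-of-norm part, contributing to $H^1_\mathcal{A}$), exactly as in the cocycle computation at the end of the proof of Lemma~\ref{resKernel}. Then the image of $x$ under the Kummer map, inflated appropriately, lands in $H^1_\mathcal{F}(K_v,E[p]) + H^1_\mathcal{A}(K_v,E[p])$. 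Combined with the fact that the ambiguity in lifting $\res(c)$ to $c$ is precisely $\ker(\res)$, which is contained in $H^1_{\mathcal{F}+\mathcal{A}}(K_v,E[p])$ by Lemma~\ref{resKernel}, this gives $c \in H^1_{\mathcal{F}+\mathcal{A}}(K_v,E[p])$.

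For the reverse inclusion $H^1_{\mathcal{F}+\mathcal{A}}(K_v,E[p]) \subseteq \res^{-1}\big(H^1_{\mathcal{F'}}(L_w,E[p])\big)$, it suffices to check that both $H^1_\mathcal{F}(K_v,E[p])$ and $H^1_\mathcal{A}(K_v,E[p])$ map into $H^1_{\mathcal{F'}}(L_w,E[p])$ under $\res$. For $H^1_\mathcal{F} = \mathrm{im}(\lambda_{E,v})$ this is just functoriality of the Kummer map with respect to the restriction $E(K_v) \to E(L_w)$: restriction of $\lambda_{E,v}(P)$ is $\lambda_{E,v}^{L_w}$ of the image of $P$. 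For $H^1_\mathcal{A} = \mathrm{im}(\lambda_{A,v})$ (composed with the isomorphism $A[\mathfrak{p}] \simeq E[p]$ of Proposition~\ref{IsomOfTorsion}), I would again use the cocycle description: a class $\lambda_{A,v}(P)$ with $P \in A(K_v)$ is represented, after identifying $A(K_v)$ with norm-zero elements of $E(L_w)$ and choosing a lift $Q_1 \in A(\overline{K_v})$ with $\pi(Q_1) = P$, by a cocycle $\{Q_1^g - Q_1\}$; restricting to $G_{L_w}$ and tracking through the isomorphisms identifies this restriction with a class in the image of $E(L_w)/pE(L_w) \to H^1(L_w,E[p])$, as was essentially shown in the body of Lemma~\ref{resKernel}.

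The main obstacle I anticipate is the careful bookkeeping in the first inclusion: namely, showing that an arbitrary $x \in E(L_w)$ whose Kummer class descends to $H^1(K_v,E[p])$ can be decomposed, modulo $pE(L_w)$, into a sum of a class pulled back from $E(K_v)$ and one pulled back from $A(K_v)$, with the ``error'' landing in $\ker(\res)$. This requires exploiting that $x$ is a universal norm in the relevant sense — equivalently, that $\res(c)$ being \emph{in the image of} the local Kummer map over $L_w$ forces $x$ to be compatible with the $G$-action up to $p$-divisibility — and then running the same argument with $\alpha \pi^{p-2}$ as in Lemma~\ref{resKernel} to produce the explicit splitting. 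Once that decomposition is in hand, the rest is formal manipulation of inflation–restriction and the defining property $\ker(\res) \subseteq H^1_{\mathcal{F}+\mathcal{A}}(K_v,E[p])$.
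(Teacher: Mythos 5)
Your overall architecture matches the paper's: prove the two inclusions separately, get the containment $H^1_{\mathcal{F}+\mathcal{A}}(K_v,E[p]) \subseteq \res^{-1}\big(H^1_{\mathcal{F}'}(L_w,E[p])\big)$ from functoriality of the Kummer maps for $E$ and for $A$, and reduce the reverse inclusion to decomposing a point $P\in E(L_w)$ with $\res(x)=\lambda_{E,w}(P)$ into an $E(K_v)$-piece and an $A(K_v)$-piece, absorbing the remaining ambiguity into $\ker(\res)$ via Lemma~\ref{resKernel}. The easy inclusion and the use of Lemma~\ref{resKernel} are fine as you describe them.

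The gap is precisely at the step you yourself flag as ``the main obstacle'': you never produce the decomposition of $P$, and the mechanism you propose for it does not work. The maps $\mathbb{Z}[G]\twoheadrightarrow\mathbb{Z}$ and $\mathbb{Z}[G]\twoheadrightarrow\mathcal{O}$ embed $\mathbb{Z}[G]$ into $\mathbb{Z}\times\mathcal{O}$ only with $p$-power index (the relevant idempotents are $\tfrac1p N_{L/K}$ and $1-\tfrac1p N_{L/K}$, with $p$ in the denominator), so ``splitting $x$ up to $p$-divisible elements'' through this ring structure only decomposes $pP = N_{L_w/K_v}(P) + \big(pP - N_{L_w/K_v}(P)\big)$, which is the zero class in $E(L_w)/pE(L_w)$ and carries no information. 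The missing ingredient is a corestriction computation: since $\cores\circ\res$ is multiplication by $[L_w:K_v]=p$ on $H^1(K_v,E[p])$, hence zero, one gets $\lambda_{E,v}\big(N_{L_w/K_v}(P)\big)=\cores\big(\lambda_{E,w}(P)\big)=\cores(\res(x))=0$, so $N_{L_w/K_v}(P)=pQ$ for some $Q\in E(K_v)$. Only with this in hand can one set $R=P-Q$, check $N_{L_w/K_v}(R)=0$ so that $R$ defines a point of $A(K_v)$, and conclude that $\lambda_{A,v}(R)+\lambda_{E,v}(Q)-x$ lies in $\ker(\res)\subseteq H^1_{\mathcal{F}+\mathcal{A}}(K_v,E[p])$. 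Your appeal to ``$x$ being a universal norm in the relevant sense'' gestures toward this but is not an argument; without the corestriction step the claimed decomposition simply does not exist for an arbitrary $P\in E(L_w)$.
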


\begin{proof}
The inclusion $\res^{-1}\big(H^1_{\mathcal{F'}}(L_w,E[p])\big) \supset H^1_{\mathcal{F}+\mathcal{A}}(K_v,E[p])$ follows from considering the following commutative diagram with exact rows
$$
\begin{CD}
0 @>>> E(K_v)/pE(K_v)  @>\lambda_{E,v} >> H^1(K_v,E[p]) \\
@.               @VVV                                   @VV\res V         \\
0 @>>> E(L_w)/pE(L_w)  @>\lambda_{E,w} >> H^1(L_w,E[p])
\end{CD}
$$
and the corresponding one for $A$,
$$
\begin{CD}
0 @>>> A(K_v)/\pi A(K_v)  @>\lambda_{A,v} >> H^1(K_v,E[p]) \\
@.               @VV\pi^{p-2}V                                   @VV\res V         \\
0 @>>> E(L_w)/pE(L_w)  @>\lambda_{E,w} >> H^1(L_w,E[p]).
\end{CD}
$$
For the reverse inclusion, we again suppose there is only one prime $w$ of $L$ above $v$, as otherwise there is nothing to prove. Take $x\in \res^{-1}\big(H^1_{\mathcal{F'}}(L_w,E[p])\big)$, so that $\res(x)\in H^1_{\mathcal{F'}}(L_w,E[p])$.  Then $\res(x) = \lambda_{E,w}(P)$ for some $P \in E(L_w)$. From the commutative diagram
$$
\begin{CD}
0 @>>> E(K_v)/pE(K_v)  @>\lambda_{E,v} >> H^1(K_v,E[p]) \\
@.               @AA N_{L_w/K_v} A                                   @AA \cores A        \\
0 @>>> E(L_w)/pE(L_w)  @>\lambda_{E,w} >> H^1(L_w,E[p])
\end{CD}
$$
we obtain that
\begin{align*}
\lambda_{E,v}\big(N_{L_w/K_v}(P)\big) &= \cores\big(\lambda_{E,w}(P)\big) \\
															 &= \cores(\res(x)) \\
															 &= 0.
\end{align*}
It follows that $N_{L_w/K_v}(P)=pQ$ for some $Q\in E(K_v)$. Finally, set $R = P-Q$. Then $N_{L_w/K_v}(R)=0$ hence we may view $R$ as an element of $A(K_v)$. It follows that $\lambda_{A,v}(R)+\lambda_{E,v}(Q)-x$ is in contained in the kernel of $\res$, and the result now follows from Lemma \ref{resKernel}.
\end{proof}

\section{Growth in Selmer groups}

In this section we use the results of previous sections to study the growth of $p$-Selmer groups in cyclic extensions. Assume for now that $L/K$ is a finite Galois extension of number fields and $E/K$ is an elliptic curve over $K$. For $v$ a (archimidean or non-archimidean) prime of $K$ we define $W_{v,L}$ by
$$
W_{v,L}=\ker\Big(H^1(K_v,E(\overline{K_v}) \longrightarrow H^1(L_w,E(\overline{L_w}))\Big),
$$
where $w$ is any place of $L$ lying above $v$. As $L/K$ is Galois, this definition is independent of the choice of $w$. Also, if we set $G_v=\Gal(L_w/K_v)$ then by inflation-restriction we have
$$
W_{v,L} \simeq H^1(G_v,E(L_w)).
$$
It follows from this that $W_{v,L}$ is finite.

Suppose now that $L/K$ has odd prime degree $p$, and let $G=\Gal(L/K)$. Let $S$ be a finite set of primes of $K$ containing all archimidean primes, all primes above $p$, primes of bad reduction of $E$ and all primes ramified in $L/K$. 

\begin{lem}\label{WvLdim}
For every prime $v$ in $K$, we have $W_{v,L} \simeq H^1_{\mathcal{F}+\mathcal{A}}(K_v,E[p])/H^1_{\mathcal{F}}(K_v,E[p])$.
\end{lem}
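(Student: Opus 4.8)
The plan is to identify $W_{v,L}$ with a quotient of local cohomology groups of $E[p]$ by chasing through the standard Kummer-theoretic description of the local image. First I would recall that for each prime $v$ of $K$ and $w$ above $v$, the local condition $H^1_{\mathcal{F}}(K_v,E[p])$ is by definition the image of the Kummer map $\lambda_{E,v}$, so there is a tautological exact sequence
$$
0 \longrightarrow H^1_{\mathcal{F}}(K_v,E[p]) \longrightarrow H^1(K_v,E[p]) \longrightarrow H^1(K_v,E(\overline{K_v}))[p] \longrightarrow 0,
$$
coming from the $p$-descent sequence $0\to E[p]\to E\xrightarrow{p} E\to 0$ over $K_v$, and similarly over $L_w$. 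By definition $W_{v,L}$ is the kernel of the restriction map $H^1(K_v,E(\overline{K_v}))\to H^1(L_w,E(\overline{L_w}))$; since $W_{v,L}$ is finite (indeed it is killed by $[L:K]=p$ via restriction–corestriction, as noted), it lies inside the $p$-torsion, so we may work with the $p$-torsion subgroups throughout. The idea is then to transport this kernel back to $H^1(K_v,E[p])$ through the commutative ladder relating the two $p$-descent sequences.

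The key steps, in order: (1) Write down the commutative diagram with exact rows given by the two $p$-descent sequences over $K_v$ and over $L_w$, with vertical restriction maps, and observe that $W_{v,L}$ is the kernel of the right-hand vertical map (restricted to $p$-torsion). (2) A diagram chase (snake-lemma style) shows that the preimage in $H^1(K_v,E[p])$ of $W_{v,L}$ is exactly $\res^{-1}\big(H^1_{\mathcal{F'}}(L_w,E[p])\big)$, because an element of $H^1(K_v,E[p])$ maps into $W_{v,L}\subset H^1(K_v,E(\overline{K_v}))[p]$ precisely when its restriction to $H^1(L_w,E(\overline{L_w}))$ vanishes, i.e. when its restriction lies in the image of the $L_w$-Kummer map $H^1_{\mathcal{F'}}(L_w,E[p])$. (3) Apply Proposition \ref{resStruct}, which identifies $\res^{-1}\big(H^1_{\mathcal{F'}}(L_w,E[p])\big)$ with $H^1_{\mathcal{F}+\mathcal{A}}(K_v,E[p])$. (4) Conclude: the surjection $H^1(K_v,E[p])\twoheadrightarrow H^1(K_v,E(\overline{K_v}))[p]$ with kernel $H^1_{\mathcal{F}}(K_v,E[p])$ carries $H^1_{\mathcal{F}+\mathcal{A}}(K_v,E[p])$ onto $W_{v,L}$, and since $H^1_{\mathcal{F}}(K_v,E[p])\subseteq H^1_{\mathcal{F}+\mathcal{A}}(K_v,E[p])$, passing to the quotient gives the desired isomorphism $W_{v,L}\simeq H^1_{\mathcal{F}+\mathcal{A}}(K_v,E[p])/H^1_{\mathcal{F}}(K_v,E[p])$.

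I would handle the split case separately but trivially: if $v$ splits completely in $L$ then $L_w=K_v$, the restriction map is the identity, so $W_{v,L}=0$, while $\mathcal{A}$ and $\mathcal{F}$ agree locally at such $v$ (the Weil restriction becomes a product and $A(K_v)=0$-contributing), so both sides vanish; alternatively one simply notes Proposition \ref{resStruct} already covers this case. The main obstacle is step (2): being careful that ``$\res(x)$ lies in the image of the $L_w$-Kummer map'' is genuinely equivalent to ``the image of $x$ in $H^1(K_v,E(\overline{K_v}))$ dies on restriction to $L_w$'' — this requires knowing that the image of $H^1(K_v,E(\overline{K_v}))[p]$ in $H^1(L_w,E(\overline{L_w}))$ is injective modulo the obstruction coming from $H^1_{\mathcal{F'}}(L_w,E[p])$, which is exactly the exactness of the bottom row of the descent diagram. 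Once the diagram is set up correctly this is a routine chase, and everything else is bookkeeping plus an invocation of Proposition \ref{resStruct}.
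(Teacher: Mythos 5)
Your proposal is correct and follows essentially the same route as the paper: both use the commutative ladder of $p$-descent Kummer sequences over $K_v$ and $L_w$ with vertical restriction maps, identify the preimage of $W_{v,L}$ under $\beta:H^1(K_v,E[p])\to H^1(K_v,E(\overline{K_v}))[p]$ with $\res^{-1}\big(H^1_{\mathcal{F'}}(L_w,E[p])\big)=H^1_{\mathcal{F}+\mathcal{A}}(K_v,E[p])$ via Proposition \ref{resStruct}, and conclude by noting $\ker\beta=H^1_{\mathcal{F}}(K_v,E[p])$. Your step (2) just makes explicit the diagram chase the paper leaves implicit.
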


\begin{proof}
Consider the commutative diagram with exact rows
$$
\begin{CD}\label{ComDia}
0 @>>> E(K_v)/pE(K_v) @>\lambda_{E,v} >> H^1(K_v,E[p]) @>\beta >> H^1(K_v,E) \\
@.                  @VVV                                         @V\res VV                @V \res' VV         \\      
0 @>>> E(L_w)/pE(L_w) @>\lambda_{E,w} >> H^1(L_w,E[p]) @>>> H^1(L_w,E) .
\end{CD}
$$
It is not hard to see $\beta$ induces a map from $H^1_{\mathcal{F}+\mathcal{A}}(K_v,E[p])$ to $W_{v,L}$. Indeed, if $x$ is in $H^1_{\mathcal{F}+\mathcal{A}}(K_v,E[p])$ then by Proposition \ref{resStruct} we have that $\res'(\beta(x))=0$, hence $\beta(x)\in \ker(\res')=W_{v,L}$. It is easy to see that this map is onto and its kernel is $H^1_\mathcal{F}(K_v,E[p])$.
\end{proof}

\begin{pro}\label{mainPro}
If $\Sel_p(E/K)$ is trivial, then 
\begin{equation*}
\dim_{\mathbb{F}_p} \Sel_p(E/L)^{\Gal(L/K)} = \sum_{v \in S} \dim_{\mathbb{F}_p} W_{v,L}
\end{equation*}
\end{pro}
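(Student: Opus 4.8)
The plan is to establish the chain
$$
\dim_{\mathbb{F}_p}\Sel_p(E/L)^{G}=\dim_{\mathbb{F}_p}H^1_{\mathcal{F}+\mathcal{A}}(K,E[p])=\sum_{v\in S}\dim_{\mathbb{F}_p}W_{v,L},
$$
where $\mathcal{F}$ is the Selmer structure with $H^1_{\mathcal{F}}(K,E[p])=\Sel_p(E/K)$ from the Example in Section 4 and $\mathcal{A}$ is the structure attached to $A$. A fact I would use throughout is that $E(L)[p]=0$: the inclusion $E(K)/pE(K)\hookrightarrow\Sel_p(E/K)=0$ gives $E(K)[p]=0$, and since $E(L)[p]$ is a module over the local Artinian ring $\mathbb{F}_p[G]$ whose submodule of invariants is $E(L)^{G}[p]=E(K)[p]=0$, the whole module $E(L)[p]$ vanishes.

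For the first equality I would run inflation--restriction along $L/K$. Because $G$ is cyclic and $E(L)[p]=0$, both $H^1(G,E(L)[p])$ and $H^2(G,E(L)[p])$ vanish, so restriction is an isomorphism $H^1(K,E[p])\xrightarrow{\ \sim\ }H^1(L,E[p])^{G}$. Choosing a place $w$ of $L$ above each place $v$ of $K$, a class $y\in H^1(K,E[p])$ restricts into $\Sel_p(E/L)$ if and only if its image in every $H^1(L_w,E[p])$ lies in the Kummer subgroup, which by Proposition \ref{resStruct} is equivalent to $\res_v(y)\in H^1_{\mathcal{F}+\mathcal{A}}(K_v,E[p])$ for all $v$, i.e.\ to $y\in H^1_{\mathcal{F}+\mathcal{A}}(K,E[p])$. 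Since the image of restriction is all of $H^1(L,E[p])^{G}\supseteq\Sel_p(E/L)^{G}$, we conclude that restriction carries $H^1_{\mathcal{F}+\mathcal{A}}(K,E[p])$ isomorphically onto $\Sel_p(E/L)^{G}$.

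For the second equality I would apply Cassels--Poitou--Tate (Proposition \ref{CPT}) to $E[p]$ over $K$, together with the map $\beta$ of Lemma \ref{WvLdim}, which identifies $H^1(K_v,E[p])/H^1_{\mathcal{F}}(K_v,E[p])\cong H^1(K_v,E)[p]$ and $W_{v,L}\cong H^1_{\mathcal{F}+\mathcal{A}}(K_v,E[p])/H^1_{\mathcal{F}}(K_v,E[p])$. First one checks $W_{v,L}=H^1(G_v,E(L_w))=0$ for $v\notin S$: there $v\nmid p$, $E$ has good reduction and $L_w/K_v$ is unramified, so the kernel of reduction in $E(L_w)$ is a pro-$\ell$ group with $\ell\neq p$, hence uniquely divisible by $p$ and cohomologically trivial, while $H^1(G_v,\tilde E(k_w))$ injects by inflation into $H^1(k_v,\tilde E(\overline{k_v}))=0$ by Lang's theorem. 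Consequently $H^1_{\mathcal{F}+\mathcal{A}}(K,E[p])\subseteq H^1(K_S/K,E[p])$, and localising then applying $\beta$ gives a map $H^1_{\mathcal{F}+\mathcal{A}}(K,E[p])\to\bigoplus_{v\in S}W_{v,L}$ whose kernel is $H^1_{\mathcal{F}}(K,E[p])=\Sel_p(E/K)=0$. Surjectivity is where I would use the hypothesis once more: given $(c_v)_{v\in S}$ in $\bigoplus_{v\in S}W_{v,L}\subseteq\bigoplus_{v\in S}H^1(K_v,E)[p]$, the vanishing of $\Sel_p(E/K)$ makes the localisation map of Proposition \ref{CPT} surjective, so there is a global class $c$ with $\beta(\res_v c)=c_v$ for $v\in S$; at $v\notin S$ its localisation is unramified and hence in $H^1_{\mathcal{F}}(K_v,E[p])\subseteq H^1_{\mathcal{F}+\mathcal{A}}(K_v,E[p])$, and at $v\in S$ the relation $\beta(\res_v c)=c_v\in\beta\big(H^1_{\mathcal{F}+\mathcal{A}}(K_v,E[p])\big)$ together with $\ker\beta=H^1_{\mathcal{F}}(K_v,E[p])\subseteq H^1_{\mathcal{F}+\mathcal{A}}(K_v,E[p])$ forces $\res_v c\in H^1_{\mathcal{F}+\mathcal{A}}(K_v,E[p])$. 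Thus $c$ lies in $H^1_{\mathcal{F}+\mathcal{A}}(K,E[p])$ and maps to $(c_v)$, so the map is an isomorphism and the second equality follows.

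The hard part is not a single deep step but keeping the two vanishing inputs and their supports under control: one must confirm that $\mathcal{F}+\mathcal{A}$ coincides with the unramified local condition outside $S$ (so Cassels--Poitou--Tate applies) and that $W_{v,L}=0$ there, and one relies on the self-duality of $\mathcal{F}$ under the Weil pairing, which is exactly what lets the single assumption $\Sel_p(E/K)=0$ kill both the surjectivity obstruction in Proposition \ref{CPT} and, implicitly, the dual obstruction in the Poitou--Tate comparison of $\mathcal{F}$ with $\mathcal{F}+\mathcal{A}$; the triviality of $E(L)[p]$ then disposes of the residual inflation--restriction obstruction.
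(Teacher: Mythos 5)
Your proof is correct. It does not follow the proposition's official proof, which is a one-paragraph appeal to Propositions 1.3, 2.1 and 4.4 of Mazur--Rubin comparing the Selmer groups attached to the structures $\mathcal{F}+\mathcal{A}$ and $\mathcal{F}\cap\mathcal{A}$; instead you unpack that global-duality input by hand. But this is precisely the alternative argument the paper itself sketches in the Remark immediately following the proposition (inflation--restriction plus the snake lemma, with $\coker\varphi=0$ extracted from Proposition \ref{CPT}), so in substance you have reproduced the paper's second route with the details filled in: the deduction $E(L)[p]=0$ from $\Sel_p(E/K)=0$, the identification $\res^{-1}(\Sel_p(E/L))=H^1_{\mathcal{F}+\mathcal{A}}(K,E[p])$ via Proposition \ref{resStruct}, the vanishing of $W_{v,L}$ outside $S$, and the explicit surjectivity check onto $\bigoplus_{v\in S}W_{v,L}$. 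All of these steps are sound. The trade-off between the two routes is that the Mazur--Rubin comparison of $H^1_{\mathcal{F}+\mathcal{A}}$ with $H^1_{\mathcal{F}\cap\mathcal{A}}$ holds without assuming $\Sel_p(E/K)=0$ (the proposition being the special case where the latter group dies), whereas your Cassels--Poitou--Tate argument uses that hypothesis twice (to kill the inflation term and the cokernel of localisation) but is self-contained.
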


\begin{proof}
Using Propositions 1.3, 2.1 and 4.4 of \cite{MazurRubin} we obtain 
\begin{align*}
\dim_{\mathbb{F}_p} H^1_{\mathcal{F}+\mathcal{A}}(K,E[p])/H^1_{\mathcal{F}\cap\mathcal{A}}(K,E[p]) &= \sum_{v\in S} \dim_{\mathbb{F}_p} H^1_{\mathcal{A}}(K_v,E[p])/H^1_{\mathcal{F}\cap\mathcal{A}}(K_v,E[p])\\
									& = \sum_{v\in S} \dim_{\mathbb{F}_p} H^1_{\mathcal{F} + \mathcal{A}}(K_v,E[p])/H^1_{\mathcal{F}}(K_v,E[p])\\
									& = \sum_{v\in S} \dim_{\mathbb{F}_p} W_{v,L}
\end{align*}
where the third equality follows from Lemma \ref{WvLdim}. Now if $\Sel_p(E/K)$ is trivial, then so is $H^1_{\mathcal{F}\cap\mathcal{A}}(K,E[p])$. Also, by Proposition \ref{resStruct} we have that $H^1_{\mathcal{F}+\mathcal{A}}(K,E[p]) = \res^{-1}(\Sel_p(E/L))$ and $\res$ induces an isomorphism $H^1_{\mathcal{F}+\mathcal{A}}(K_v,E[p]) \simeq \Sel_p(E/L)^{\Gal(L/K)}$, and the result follows.
\end{proof}

\begin{rem}
We can also deduce Proposition \ref{mainPro} as follows. Let $\mathcal{X'}$ the image of the restriction map $H^1(K,E[p]) \rightarrow H^1(L,E[p])$. Then using inflation-restriction we have a commutative diagram
\begin{small}
$$
\begin{CD}
 0 @>>> H^1(G, E(L)[p]) @>>> H^1(K,E[p]) @>>> \mathcal{X}' @>>> 0 \\
 @.			@VV\psi V							@VV\varphi V				@VV\varphi_L V  @.  \\
 0 @>>> \underset{v\in S}\prod W_{v,L}[p] @>>> \underset{v\in S}\prod H^1(K_v,E)[p] @>>> \underset{v\in S}\prod \hspace{.5mm}\underset{w|v}\prod H^1(L_w,E)[p]	
\end{CD}
$$
\end{small}
and so by the snake lemma we obtain an exact sequence
\begin{equation}\label{p-rank}
0 \longrightarrow \ker\psi \longrightarrow \Sel_p(E/K) \longrightarrow \mathcal{X} \longrightarrow \Big(\prod_{v \in S} W_{v,L}[p]\Big)/\mathrm{Im}\psi \longrightarrow \coker \varphi ,
\end{equation}
where $\mathcal{X}=\ker\varphi_L$. Note that here $\mathcal{X}$ is contained in $\Sel_p(E/L)^G$, and that $\mathcal{X}=\Sel_p(E/L)^G$ if $E(K)[p]=0$.

Suppose now that $E$ has trivial $p$-Selmer group over $K$. Then $\psi$ becomes the zero map and by the Cassels-Poitou-Tate (Proposition \ref{CPT}) exact sequence we have that $\coker \varphi$ is also trivial since it is isomorphic to a subgroup of $\widehat{\Sel_p(E/K)}$, hence (\ref{p-rank}) becomes
$$
\Sel_p(E/L)^G \simeq \prod_{v \in S} W_{v,L}[p].
$$
\end{rem}

Hence in this setting to analyse the $\mathbb{F}_p$-dimension of the $G$-invariants of $p$-Selmer it suffices to know the $\mathbb{F}_p$-dimensions of the local cohomology groups $W_{v,L}$ for $v\in S$. The behaviour of these groups will depend on the reduction type of the places as well as whether or not they lie above $p$. 

\subsection{$\mathbb{F}_p$-dimensions of $W_{v,L}$}

For the remainder of the section assume that $E$ is a semistable elliptic curve defined over $K$. In this section we study the behaviour of $W_{v,L}$ for the different $v\in S$. First note that when $v$ is archimidean or is split in $L$ then $W_{v,L}$ is trivial, hence we only consider inert and ramified primes. These cases are the content of the propositions to follow. Continue to assume that $L/K$ is a finite cyclic extension of number fields of odd prime degree $p$. Let $w$ be a prime of $L$ above $v$. We will write $k_w$ and $k_v$ for the residue fields of $L_w$ and $K_v$, respectively, and we denote their sizes by $q_w$ and $q_v$. Hence $q_w=q_v^{f_v}$ where $f_v$ is the inertial degree of $v$ in $L$. Also, $N_{L_w/K_v}$ will denote the norm map from $L_w$ to $K_v$. First we recall the following well-known result. We let $E_0(L_w)$ denote the subgroup of $E(L_w)$ consisting of points with non-singular reduction and $E_1(L_w)$ denote the kernel of the reduction map $E_0(L_w)\rightarrow \tilde{E}_{\ns}(k_w)$.

\begin{lem}\label{Gseq}
Let $v$ be a place $K$ and $w$ a place of $L$ above $v$, and set $G=\Gal(L_w/K_v)$.  Then there is an exact sequence of $G$-modules
$$
0 \longrightarrow E_1(L_w) \longrightarrow E_0(L_w) \longrightarrow \tilde{E}_{\ns}(k_w) \longrightarrow 0.
$$
\end{lem}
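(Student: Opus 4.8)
The plan is to reduce to the standard exact sequence of Silverman (\cite{Sil}, \S VII, Proposition 2.1) and then simply observe that every map in it is $G$-equivariant. Concretely, I would first fix a Weierstrass equation for $E$ with coefficients in $\mathcal{O}_{K_v}$ that is minimal over $\mathcal{O}_{K_v}$; this exists because $E$ is defined over $K$, and a minimal model over the local ring can be chosen. Since $E$ is semistable at $v$, its reduction type is preserved under any field extension, so this same equation, now viewed over $\mathcal{O}_{L_w}$, is a Weierstrass model for $E/L_w$ with the same special-fibre structure, and we use it to \emph{define} $E_0(L_w)$, $E_1(L_w)$ and $\tilde{E}_{\ns}(k_w)$ (recall these groups do not depend on the choice of equation). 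With this fixed model, the exact sequence of abelian groups
$$
0 \longrightarrow E_1(L_w) \longrightarrow E_0(L_w) \longrightarrow \tilde{E}_{\ns}(k_w) \longrightarrow 0
$$
is exactly \cite{Sil}, \S VII, Proposition 2.1, where the second map is the reduction-of-coordinates map modulo $\mathfrak{m}_{L_w}$ and the first is the inclusion of its kernel.

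It then remains to upgrade this to a sequence of $G$-modules. The group $G=\Gal(L_w/K_v)$ acts on $E(L_w)$ through its action on $L_w$, fixing $K_v$; hence it preserves $\mathcal{O}_{L_w}$ and $\mathfrak{m}_{L_w}$ and acts on the residue field $k_w$ fixing $k_v$, and $\tilde{E}_{\ns}(k_w)$ inherits a $G$-action. Because the chosen Weierstrass equation has coefficients in $\mathcal{O}_{K_v}$, applying $\sigma\in G$ to a point of $E(L_w)$ and then reducing gives the same result as reducing and then applying $\sigma$ to the reduced point: the reduction map is $G$-equivariant. Consequently $E_0(L_w)$ (points whose reduction is nonsingular) is a $G$-stable subgroup, $E_1(L_w)$ (the kernel of reduction) is $G$-stable, and both arrows in the displayed sequence are $G$-equivariant. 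This proves the lemma.

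I do not expect any genuine obstacle here; the content is entirely bookkeeping. The one point requiring a line of care is that the model used to define $E_0$, $E_1$, $\tilde E_{\ns}$ should have coefficients in the \emph{base} ring $\mathcal{O}_{K_v}$ so that reduction commutes with $G$ — this is where semistability of $E$ at $v$ enters, ensuring that such a base-ring model still computes the correct groups over the (possibly ramified) extension $L_w$. Once that is arranged, equivariance of all the maps is immediate from the fact that $G$ fixes $K_v$ pointwise.
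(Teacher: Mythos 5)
Your proposal is correct and matches the paper's own argument: both invoke \cite{Sil} \S VII.2, Proposition 2.1 for the exact sequence and use semistability to guarantee that a minimal model over $\mathcal{O}_{K_v}$ remains minimal over $\mathcal{O}_{L_w}$, so that the groups are $G$-stable and the reduction map is $G$-equivariant. No further comment is needed.
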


\begin{proof}
The exact sequence follows from \cite{Sil} \S VII.2, Proposition 2.1. Since $E$ has semistable reduction at $v$ it follows that any minimal model for $E$ over $K_v$ stays minimal over $L_w$, hence all the groups in the sequence are $G$-modules. The result follows since the maps in the sequence are $G$-equivariant.
\end{proof}

Recall that if $\widehat E$ is the formal group associated to the elliptic curve $E$, then there is an isomorphism $\widehat E(\mathfrak{m}_{K_v}) \simeq E_1(K_v)$, and in what follows me will sometimes write $\widehat E$ in place of $E_1$. Using this isomoprhism and Proposition \ref{herb} gives that 
$$
H^1(G,E_1(L_w))=H^2(G,E_1(L_w)).
$$
In the next lemma we show the same thing holds for $W_{v,L}$.

\begin{lem}\label{herbW}
Keeping the notation of this section, we have
$$
W_{v,L}\simeq H^2(G,E(L_w)).
$$
\end{lem}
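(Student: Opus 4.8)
The plan is to show that the Herbrand quotient of $E(L_w)$ as a $G$-module equals $1$, just as was done for the formal group in Proposition \ref{herb}; since $W_{v,L}\simeq H^1(G,E(L_w))$ and both $H^1(G,E(L_w))$ and $H^2(G,E(L_w))$ are $\mathbb{F}_p$-vector spaces, a Herbrand quotient of $1$ forces them to have the same dimension, hence to be isomorphic. So the content of the lemma is the computation $h_G(E(L_w))=1$.

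To compute this, I would use the multiplicativity of the Herbrand quotient on short exact sequences together with Lemma \ref{Gseq}, which gives the exact sequence of $G$-modules
$$
0 \longrightarrow E_1(L_w) \longrightarrow E_0(L_w) \longrightarrow \tilde{E}_{\ns}(k_w) \longrightarrow 0,
$$
so that $h_G(E_0(L_w)) = h_G(E_1(L_w)) \cdot h_G(\tilde{E}_{\ns}(k_w))$. The first factor is $1$: via $E_1(L_w)\simeq \widehat E(\mathfrak{m}_{L_w})$ this is exactly Proposition \ref{herb} applied to the formal group $\widehat E$ (noting $L_w/K_v$ is a totally ramified cyclic degree $p$ extension when $v$ is ramified, and when $v$ is inert $G$ acts on the unramified extension and one sees directly the Herbrand quotient is $1$ — or one simply invokes Proposition \ref{herb} in the ramified case and notes $W_{v,L}$ is handled separately in the inert case). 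The second factor $h_G(\tilde{E}_{\ns}(k_w))$ is $1$ because $\tilde{E}_{\ns}(k_w)$ is a finite group. Finally, the quotient $E(L_w)/E_0(L_w)$ is finite (it injects into the component group of the Néron model, of order the Tamagawa number), so $h_G(E(L_w)/E_0(L_w))=1$, and another application of multiplicativity to $0\to E_0(L_w)\to E(L_w)\to E(L_w)/E_0(L_w)\to 0$ gives $h_G(E(L_w)) = h_G(E_0(L_w)) = 1$.

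The identification $W_{v,L}\simeq H^1(G,E(L_w))$ is the inflation–restriction computation already recorded at the start of Section 5, so no further argument is needed there. One should be slightly careful about the inert case: there $L_w/K_v$ is unramified cyclic of degree $p$, and $E_1(L_w)$ is $\mathbb{Z}_p$-cohomologically trivial for an unramified extension (or one can note that the inert primes will be treated with explicit formulas later), but in fact the Herbrand quotient argument goes through verbatim since Proposition \ref{herb} was proved for any cyclic degree $p$ extension $L/K$ of local fields — the totally ramified hypothesis there is only used via $t$, and the Herbrand quotient part of the proof (logarithm, $\widehat{\mathbb{G}}_a(\mathfrak m_L^n)\simeq \mathcal O_L$ induced, $C$ finite) does not use it. Actually re-reading Proposition \ref{herb}, it is stated for the totally ramified case, so for inert $v$ I would instead observe directly that $\mathcal{O}_{L_w}$ is an induced $G$-module and run the same logarithm argument, or simply appeal to the fact that for inert primes the contribution is computed by other means in the sequel.

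The main obstacle, such as it is, is bookkeeping about which local situations need the formal-group input versus which are handled by finiteness: everything reduces to the two facts that finite modules have Herbrand quotient $1$ and that $h_G(E_1(L_w))=1$ via the formal logarithm, and the genuinely non-formal part was already dispatched in Proposition \ref{herb}. I do not anticipate any serious difficulty.
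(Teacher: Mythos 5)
Your proof is correct and follows essentially the same route as the paper: the paper likewise reduces the statement to showing $h_G(E(L_w))=1$ via the two exact sequences $0\to E_1(L_w)\to E_0(L_w)\to \tilde{E}_{\ns}(k_w)\to 0$ and $0\to E_0(L_w)\to E(L_w)\to E(L_w)/E_0(L_w)\to 0$, using Proposition \ref{herb} for $E_1(L_w)$ and triviality of the Herbrand quotient on finite modules. Your extra remarks about the inert case are a reasonable (and harmless) amplification of what the paper leaves implicit.
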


\begin{proof}
The argument is the same as that of Proposition \ref{herb}. Lemma \ref{Gseq} gives the exact sequence
$$
0 \longrightarrow E_1(L_w) \longrightarrow E_0(L_w) \longrightarrow \tilde{E}_{\ns}(k_w) \longrightarrow 0.
$$
By what was just shown $E_1(L_w)$ has trivial Herbrand quotient, as does $\tilde{E}_{\ns}(k_w)$ since it is finite, hence the same is true of $E_0(L_w)$. Finally there is an exact sequence 
$$
0 \longrightarrow E_0(L_w) \longrightarrow E(L_w) \longrightarrow E(L_w)/E_0(L_w) \longrightarrow 0
$$
where $E(L_w)/E_0(L_w)$ is finite, and the result follows by the same argument.
\end{proof}

\subsubsection{Case when $v \nmid p$}

Keeping the notation of this section, let $v$ be a prime in $S$ which does not divide $p$.

\begin{pro}\label{good}
Suppose that $E$ has good reduction at $v$. Then
\begin{itemize}
\item[(i)]
If $v$ is ramified in $L$, then
$$
\dim_{\mathbb{F}_p} W_{v,L} = \dim_{\mathbb{F}_p} \tilde{E}(k_v)[p].
$$
\item[(ii)]
If $v$ is inert in $L$ then $W_{v,L}$ is trivial.
\end{itemize}
\end{pro}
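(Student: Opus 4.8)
The plan is to use the identification $W_{v,L}\simeq H^2(G,E(L_w))$ from Lemma \ref{herbW} (equivalently $H^1(G,E(L_w))$, since $G$ is cyclic of prime order and these are $\mathbb{F}_p$-vector spaces of equal dimension) and then reduce everything to the reduction exact sequence of Lemma \ref{Gseq}. Since $v\nmid p$ and $E$ has good reduction at $v$, the formal group $E_1(L_w)=\widehat{E}(\mathfrak{m}_{L_w})$ is uniquely divisible by $p$: multiplication by $p$ on $\widehat{E}(\mathfrak{m}_{L_w})$ is an isomorphism because its derivative at $0$ is $p$, a unit in $\mathcal{O}_{L_w}$. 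By Proposition \ref{pdiv} (applied to the pro-$p$ group $G$, which is legitimate, or directly) we get $H^i(G,E_1(L_w))=0$ for all $i\geqslant 1$. Hence the long exact cohomology sequence attached to $0\to E_1(L_w)\to E_0(L_w)\to \tilde{E}(k_w)\to 0$ gives $H^i(G,E_0(L_w))\cong H^i(G,\tilde{E}(k_w))$ for all $i\geqslant 1$; and since $E$ has good reduction $E_0(L_w)=E(L_w)$, so $W_{v,L}\simeq H^2(G,\tilde{E}(k_w))$ (and also $\simeq H^1(G,\tilde{E}(k_w))$).

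For part (ii), if $v$ is inert then $L_w/K_v$ is unramified of degree $p$, and $\tilde{E}(k_w)$ is a finite $G=\Gal(k_w/k_v)$-module; a standard fact (Lang's theorem, or just that the reduction of an unramified extension is again the points of $E$ over the residue extension with Frobenius acting) together with the surjectivity of the norm on points of a finite group scheme over a finite field gives $H^2(G,\tilde{E}(k_w))=\tilde{E}(k_v)/N_{k_w/k_v}\tilde{E}(k_w)=0$. Alternatively, and perhaps cleaner, one can argue that in the unramified case $E(L_w)$ is cohomologically trivial for $G$: this follows because $E_1(L_w)$ has trivial cohomology as above, $\tilde{E}(k_w)$ is cohomologically trivial over $\Gal(k_w/k_v)$ since the Herbrand quotient of a finite module is $1$ and $H^1(\Gal(k_w/k_v),\tilde E(k_w))$ vanishes by a counting/norm argument, and $E(L_w)/E_0(L_w)$ is trivial by good reduction. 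So $W_{v,L}=0$.

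For part (i), $v$ is totally ramified in $L$, so $k_w=k_v$ and $G$ acts trivially on $\tilde{E}(k_v)$. Then $H^2(G,\tilde{E}(k_v))=\tilde{E}(k_v)/N\,\tilde{E}(k_v)$ where $N=\sum_{g\in G}g$ acts as multiplication by $p$ on the trivial module; hence $H^2(G,\tilde{E}(k_v))=\tilde{E}(k_v)/p\,\tilde{E}(k_v)$. Since $\tilde{E}(k_v)$ is a finite abelian group, $\tilde{E}(k_v)/p\,\tilde{E}(k_v)\cong \tilde{E}(k_v)[p]$ as $\mathbb{F}_p$-vector spaces, so $\dim_{\mathbb{F}_p}W_{v,L}=\dim_{\mathbb{F}_p}\tilde{E}(k_v)[p]$, as claimed. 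The only mildly delicate points are the vanishing $H^1(G,E_1(L_w))=0$ (handled by unique $p$-divisibility plus Proposition \ref{pdiv}, once one checks $E_1(L_w)$ really is $p$-divisible, which is where the hypothesis $v\nmid p$ enters) and, in the inert case, the vanishing of $H^*(\Gal(k_w/k_v),\tilde E(k_w))$, which is the classical statement that points of an elliptic curve over a finite field form a cohomologically trivial module for the cyclic Galois group of a finite extension. I expect the inert case to be the part requiring the most care to state precisely, though it is entirely standard.
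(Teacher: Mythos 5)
Your proof is correct and follows essentially the same route as the paper: the reduction exact sequence of Lemma \ref{Gseq}, unique $p$-divisibility of $E_1(L_w)$ since $v\nmid p$ (so its cohomology vanishes by Proposition \ref{pdiv}), Lang's theorem for the inert case, and the trivial-action computation for the totally ramified case. The only cosmetic difference is that you compute $H^2(G,\tilde E(k_v))=\tilde E(k_v)/p\tilde E(k_v)$ where the paper computes $H^1(G,\tilde E(k_v))=\Hom(G,\tilde E(k_v))$; both equal $\dim_{\mathbb{F}_p}\tilde E(k_v)[p]$ and the passage between degrees is justified by Lemma \ref{herbW}.
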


\begin{proof}
Let $w$ be the prime of $L$ above $v$, and set $G=\Gal(L_w/K_v)$, so that $G$ is a cyclic group of order $p$. Recall that $W_{v,L}$ is isomorphic to  $H^1(G,E(L_w))$. By Lemma \ref{Gseq} we have an exact sequence of $G$-modules
$$
0 \longrightarrow E_1(L_w) \longrightarrow E(L_w) \longrightarrow \tilde{E}(k_w) \longrightarrow 0
$$
which induces an exact sequence of cohomology groups
$$
H^1(G,E_1(L_w)) \longrightarrow H^1(G,E(L_w)) \longrightarrow H^1(G,\tilde{E}(k_w)) \longrightarrow H^2(G,E_1(L_w)).
$$
Let $\ell$ denote the characteristic of $k_v$. By \cite{Sil} \S IV, Proposition 2.3 and \S VII, Proposition 2.2 we have that $E_1(L_w)$ is uniquely divisible by $p$, as $\ell\neq p$. It follows then from Proposition \ref{pdiv} that $H^i(G,E_1(L_w))=0$ for $i=1,2$, and therefore we obtain 
$$
H^1(G,E(L_w)) \overset{\sim}{\longrightarrow} H^1(G,\tilde{E}(k_w)).
$$
For (i), as $v$ is totally ramified in $L$, we have $k_w=k_v$ and $G$ acts trivially on it. Hence
$$
 H^1(G,\tilde{E}(k_w)) \simeq \Hom(G,\tilde{E}(k_v))
$$
and the result follows since $G\simeq \mathbb{Z}/p\mathbb{Z}$. 

Part (ii) follows from Theorem 2 of \cite{Lang}, which gives that $H^1(G,\tilde{E}(k_w))$ is trivial, hence so is $W_{v,L}$.
\end{proof}

To deal with primes of split multiplicative reduction we introduce the following notation. Let $w$ be a prime of $L$ above $v$. Then $L_w/K_v$ is tamely ramified with $\Gal(L_w/K_v) \simeq \mathbb{Z}/p\mathbb{Z}$, hence by Lemma 1 of \cite{MJG}, $K_v$ contains a primitive $p$th root of unity so we may write $L_w=K_v(\sqrt[p]{\pi_v})$ where $\pi_v$ is a uniformiser in $K_v$. We then write $u_v$ for a unit in $\mathcal{O}_v$ such that $j(E/K_v)=\pi_v^{-m} u_v$, where $-m=\ord_v(j(E))$.

\begin{pro}\label{split}
Suppose $E$ has split multiplicative reduction at $v$. Then
\begin{itemize}
\item[(i)]
If $v$ is ramified in $L$, then
$$
\dim_{\mathbb{F}_p} W_{v,L}=\begin{cases}
1 & \text{if $u_v^{\frac{q_v-1}{p}} \equiv 1 \mod{\pi_v}$} \\
0 & \text{otherwise}
\end{cases}
$$
\item[(ii)]
If $v$ is inert in $L$ then 
$$
\dim_{\mathbb{F}_p} W_{v,L} = 
\begin{cases}
1 & \text{if $p \mid c_v$} \\
0 & \text{otherwise}
\end{cases}
$$
\end{itemize}
\end{pro}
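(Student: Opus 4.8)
The plan is to combine the Tate parametrisation with local class field theory. Since $E$ has split multiplicative reduction at $v$, there is a Tate parameter $q_E\in K_v^\times$ with $\ord_v(q_E)=-\ord_v(j(E))=m$, together with an exact sequence of $G_{K_v}$-modules $0\to q_E^{\mathbb Z}\to\overline{K_v}^\times\to E(\overline{K_v})\to 0$ in which $q_E^{\mathbb Z}\cong\mathbb Z$ carries the trivial action. Let $w$ be the prime of $L$ above $v$ and $G=\Gal(L_w/K_v)\simeq\mathbb Z/p\mathbb Z$. Taking $G_{L_w}$-invariants, and using that $H^1(L_w,q_E^{\mathbb Z})=\Hom_{\mathrm{cont}}(G_{L_w},\mathbb Z)=0$, produces a short exact sequence of $G$-modules
\[
0\longrightarrow q_E^{\mathbb Z}\longrightarrow L_w^\times\longrightarrow E(L_w)\longrightarrow 0 .
\]

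Next I would pass to Tate cohomology. By Lemma \ref{herbW} and the periodicity of Tate cohomology for the cyclic group $G$ we have $W_{v,L}\simeq H^2(G,E(L_w))\simeq\widehat{H}^0(G,E(L_w))=E(K_v)/N_{L_w/K_v}E(L_w)$. The Tate cohomology long exact sequence of the displayed sequence reads, near degree $0$,
\[
\widehat{H}^0(G,q_E^{\mathbb Z})\longrightarrow\widehat{H}^0(G,L_w^\times)\longrightarrow\widehat{H}^0(G,E(L_w))\longrightarrow\widehat{H}^1(G,q_E^{\mathbb Z}),
\]
where $\widehat{H}^1(G,q_E^{\mathbb Z})=H^1(G,\mathbb Z)=\Hom(G,\mathbb Z)=0$, while $\widehat{H}^0(G,q_E^{\mathbb Z})\simeq\mathbb Z/p\mathbb Z$ is generated by the class of $q_E$ and $\widehat{H}^0(G,L_w^\times)=K_v^\times/N_{L_w/K_v}(L_w^\times)\simeq\mathbb Z/p\mathbb Z$ by local class field theory. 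Hence $W_{v,L}$ is the cokernel of the map $\mathbb Z/p\mathbb Z\to K_v^\times/N_{L_w/K_v}(L_w^\times)$ carrying a generator to the class of $q_E$, and so it has $\mathbb F_p$-dimension $1$ if $q_E\in N_{L_w/K_v}(L_w^\times)$ and $0$ otherwise. It remains to decide when the Tate parameter is a norm.

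For part (ii), $L_w/K_v$ is unramified of degree $p$, so $N_{L_w/K_v}(L_w^\times)=\{x\in K_v^\times:p\mid\ord_v(x)\}$ and $q_E$ is a norm if and only if $p\mid\ord_v(q_E)=m$. Since $E$ has split multiplicative reduction it has Kodaira type $I_m$ with cyclic component group of order $m$, whence $c_v=m=\ord_v(q_E)$, and part (ii) follows. For part (i), recall (as noted before the proposition) that $\zeta_p\in K_v$ and $L_w=K_v(\sqrt[p]{\pi_v})$; by Kummer theory $q_E\in N_{L_w/K_v}(L_w^\times)$ if and only if the norm-residue symbol $(q_E,\pi_v)_v$ is trivial. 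Because $v\nmid p$ this is the tame symbol, equal to the class of $(-1)^{m}\,\overline{q_E\pi_v^{-m}}$ in $k_v^\times/(k_v^\times)^p$, and since $p$ is odd we have $-1=(-1)^p\in(k_v^\times)^p$, so the sign is irrelevant. Finally, from the $q$-expansion $1/q_E=j(E)-744-\cdots=\pi_v^{-m}u_v\varepsilon$ with $\varepsilon\equiv1\pmod{\pi_v^m}$ one gets $q_E\pi_v^{-m}\equiv u_v^{-1}\pmod{\pi_v}$, so $(q_E,\pi_v)_v=1$ if and only if $u_v$ is a $p$-th power in $k_v^\times$, i.e.\ if and only if $u_v^{(q_v-1)/p}\equiv1\pmod{\pi_v}$; this is part (i).

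The step I expect to be the main obstacle is the explicit tame-symbol computation in part (i): one must extract the unit part of the Tate parameter from the $q$-expansion of $j$, match it with $u_v$, and verify that the sign in the tame symbol is harmless. The remainder is formal bookkeeping with Tate cohomology and local class field theory.
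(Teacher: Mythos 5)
Your argument is correct and follows essentially the same route as the paper: reduce via Lemma \ref{herbW} to $E(K_v)/N_{L_w/K_v}E(L_w)$, use the Tate parametrisation to show this is nontrivial exactly when $q_E$ is a norm from $L_w$, then settle the norm question by local class field theory (valuation of $q_E$ equals $c_v$ in the inert case) and by the tame Hilbert symbol together with the $q$-expansion of $j$ in the ramified case. The only cosmetic difference is that you run the Tate cohomology long exact sequence of $0\to q_E^{\mathbb Z}\to L_w^\times\to E(L_w)\to 0$ where the paper applies the snake lemma to the quotient presentation $E(L_w)\simeq L_w^\times/q_E^{\mathbb Z}$, and you write out the tame symbol formula explicitly where the paper cites Serre.
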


\begin{proof}
By Lemma \ref{herbW} we have that
$$
W_{v,L} \simeq H^2(G,E(L_w))
$$
and this in turn is simply $E(K_v)/N_{L_w/K_v}E(L_w)$ since $G$ is cyclic.
As $E$ has split multiplicative reduction at $v$, there is a unique $q\in K_v$ with $q \in \mathfrak{m}_{K_v}$ such that $E$ is isomorphic over $K_v$ to the Tate curve $E_q(\overline{K}_v) \simeq \overline{K}_v^\times/q^\mathbb{Z}$. As these isomorphisms are compatible with the action of Galois we obtain
$$
W_{v,L} \simeq \big(K_v^\times/q^\mathbb{Z}\big)\Big/N_{L_w/K_v}\big(L_w^\times/q^\mathbb{Z}\big). 
$$
Consider the commutative diagram
$$
\begin{CD}
1   @>>>   q^\mathbb{Z}   @>>>   L_w^\times   @>>>   L_w^\times/q^\mathbb{Z}  @>>>   1 \\
@.      @VVpV                         @VVN_{L_w/K_v}V      @VVN_{L_w/K_v}V              @.   \\         
1   @>>>   q^\mathbb{Z}   @>>>   K_v^\times   @>>>   K_v^\times/q^\mathbb{Z}  @>>>   1 \\             
\end{CD}
$$
where the rows are exact. From local reciprocity we know that $K_v^\times/N_{L_w/K_v}(L_w^\times)$ is $\mathbb{Z}/p\mathbb{Z}$ hence by the snake lemma we obtain
$$
\dim_{\mathbb{F}_p} W_{v,L} = 
\begin{cases}
1 & \text{if $q\in N_{L_w/K_v}(L_w^\times)$} \\
0 & \text{otherwise}.
\end{cases}
$$

For part (i) let $j=j(E/K_v)$, and recall (\cite{SilAT}, \S V.5) that there is a series $g(T)=T+744T^2+\dots \in\mathbb{Z}[[T]]$ such that
$$
q=g\bigg(\frac{1}{j}\bigg).
$$
Now $j=\pi_v^{-m} u_v$ where $m$ is a positive integer, and so
\begin{align*}
q&=\pi_v^m (u_v^{-1}+\dotsb)\\
  &=\pi_v^m s_v
\end{align*}
where $s_v \equiv u_v^{-1} \mod \pi_v$. Also, since $L_w=K_v(\sqrt[p]{\pi_v})$, it follows that $\pi_v \in N_{L_w/K_v}(L_w^\times)$, hence $q$ is a norm from $L_w$ if and only if $s_v$ is, and the result follows from the computation of the tame Hilbert symbol, see the corollary to Proposition 8 in (\cite{SerreLF}, \S XIV).

For (ii), let $\pi$ be any uniformiser for $K_v$. Then as $v$ is inert, under this choice of uniformiser we may identify $K_v^\times$ with $\mathbb{Z}\times O_v^\times$ and $L_w^\times$ with $\mathbb{Z}\times O_w^\times$, these identifications being compatible with the action of Galois. It follows that under this identification, the field norm maps $(m,u)$ to $(pm,N_{L_w/K_v}(u))$ for $m\in\mathbb{Z}, u\in O_w^\times$. Finally, recall that in unramified extensions the norm map is surjective on the units (see for instance \cite{SerreLF}, \S V.2, Proposition 3) hence we obtain
$$
q\in N_{L_w/K_v}(L_w^\times) \Longleftrightarrow p \mid \ord_v(q).
$$
The result now follows from the fact that $\ord_v(q) = c_v$ (see proof of \cite{SilAT}, \S V.4, Proposition 4.1).
\end{proof}

\begin{pro}\label{nonsplit}
Suppose $E$ has non-split multiplicative reduction at $v$. Then $W_{v,L}$ is trivial.
\end{pro}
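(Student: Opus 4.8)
Proposition \ref{nonsplit} asserts that $W_{v,L}$ is trivial when $E$ has non-split multiplicative reduction at $v$. The plan is to mimic the strategy already used in the proofs of Propositions \ref{good} and \ref{split}: reduce to a computation on the reduced curve via Lemma \ref{Gseq}, kill the formal-group part using Proposition \ref{pdiv}, and then exploit that the component group and the reduced curve contribute nothing to $H^1(G,-)$ (equivalently, to the norm cokernel) in this reduction type. Throughout, $w$ is a prime of $L$ above $v$, $G=\Gal(L_w/K_v)\simeq\mathbb{Z}/p\mathbb{Z}$, and $\ell$ is the residue characteristic of $K_v$, which is distinct from $p$ since $v\nmid p$.

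First I would recall from Lemma \ref{herbW} (or directly from inflation-restriction) that $W_{v,L}\simeq H^1(G,E(L_w))\simeq H^2(G,E(L_w))$, the latter identification using that $G$ is cyclic, so $H^2(G,E(L_w))\simeq E(K_v)/N_{L_w/K_v}E(L_w)$. Using the filtration $E_1(L_w)\subset E_0(L_w)\subset E(L_w)$: since $\ell\neq p$ the group $E_1(L_w)\simeq\widehat{E}(\mathfrak{m}_{L_w})$ is uniquely divisible by $p$ by \cite{Sil} \S IV Proposition 2.3 and \S VII Proposition 2.2, so Proposition \ref{pdiv} gives $H^i(G,E_1(L_w))=0$ for all $i\geq 1$; hence $H^1(G,E_0(L_w))\xrightarrow{\sim}H^1(G,\tilde{E}_{\ns}(k_w))$ and likewise in degree $2$. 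For non-split multiplicative reduction $\tilde{E}_{\ns}(k_w)\simeq \tilde{E}_{\ns}(k_w)$ is the non-split torus, i.e. $\tilde{E}_{\ns}(k_w)\simeq\ker(N\colon k_{w'}^\times\to k_w^\times)$ where $k_{w'}$ is the quadratic extension of $k_w$; equivalently, after base change to $k_w$ it is $\mu$-twisted, but over $k_w$ itself it is the norm-one subgroup of the unramified quadratic extension. The key point is then that $H^1(G,\tilde{E}_{\ns}(k_w))=0$: this is again Lang's theorem (Theorem 2 of \cite{Lang}) applied to the connected linear algebraic group $\tilde{E}_{\ns}$ over the finite field $k_w$, exactly as in the proof of Proposition \ref{good}(ii), since $H^1$ of a cyclic group acting on the $k_w$-points of a connected group over a finite field vanishes. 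This already handles the inert case, where $E(L_w)/E_0(L_w)$ has order prime to $p$ or — more carefully — one must also check the component group contributes nothing; see below.

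It remains to account for $E(L_w)/E_0(L_w)$, which is cyclic of order $c_w=\ord_w(j(E))=[L_w:K_v]\cdot(\text{denominator exponent})$; but for non-split multiplicative reduction the subtle point is that the component group is \emph{not} necessarily trivial mod $p$, so I cannot simply ignore it as in the good-reduction case. Instead I would pass to the Tate-curve description as in Proposition \ref{split}: non-split multiplicative reduction means $E$ becomes the Tate curve $E_q$ over the unramified quadratic extension $K_v'$ of $K_v$, and $E(K_v)$ is identified with the subgroup of $(K_v')^\times/q^{\mathbb{Z}}$ fixed by $\Gal(K_v'/K_v)$, which acts by $x\mapsto \bar x^{-1}$ composed appropriately — concretely $E(K_v)\simeq\{x\in (K_v')^\times : N_{K_v'/K_v}(x)\in q^{\mathbb{Z}}\}/q^{\mathbb{Z}}$ (the unitary/anisotropic form). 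One then runs the same snake-lemma diagram as in Proposition \ref{split}(ii), comparing $W_{v,L}$ with a norm cokernel of $q^{\mathbb{Z}}$ and of this twisted unit group. The crucial arithmetic input is that $p$ is odd: the quadratic twist introduces a sign/index of $2$, and since $\gcd(2,p)=1$ the relevant $p$-part of every norm index vanishes. Concretely, the twisted unit group $U=\{x:N_{K_v'/K_v}(x)=1\}$ has cohomology killed by $2$ (it is the $H^{-1}$ of the quadratic extension acting on units, a $2$-torsion group), and $q$ lies in the image of the norm up to such $2$-torsion contributions, so its class in the $\mathbb{Z}/p\mathbb{Z}$-quotient is trivial.

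The hard part will be handling the component group correctly in the non-split case: unlike good reduction (where $E(L_w)=E_0(L_w)$) or the inert split case (where the argument reduces cleanly to $p\mid c_v$), here one genuinely needs the Tate parametrization and the observation that every obstruction is $2$-primary, hence dies because $p$ is odd. I would structure the write-up so that the formal-group vanishing and the Lang-theorem vanishing for $\tilde{E}_{\ns}$ are dispatched in one paragraph, and then the component-group contribution is shown to be $p$-torsion-free via the quadratic-twist sign argument — alternatively, one may invoke directly that for non-split multiplicative reduction and $p$ odd the local norm index $[E(K_v):N_{L_w/K_v}E(L_w)]$ is prime to $p$, which can be read off from the known computation of these indices (cf. the proof of \cite{SilAT} \S V.5 together with \cite{SerreLF} \S XIV), and this is likely the cleanest route to include.
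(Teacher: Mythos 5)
Your treatment of the formal-group part and of the inert case is essentially the paper's argument: $E_1(L_w)$ is uniquely $p$-divisible since $\ell\neq p$, so its cohomology vanishes, and Lang's theorem kills $H^1(G,\tilde{E}_{\ns}(k_w))$ when $G$ is identified with $\Gal(k_w/k_v)$. But there are two genuine problems. First, in the ramified case Lang's theorem does not apply: there $k_w=k_v$ and $G$ acts trivially on $\tilde{E}_{\ns}(k_w)$, so $H^1(G,\tilde{E}_{\ns}(k_w))=\Hom(G,\tilde{E}_{\ns}(k_v))$, which is exactly the situation in Proposition \ref{good}(i) where the cohomology is generally \emph{nonzero}. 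Your blanket claim that ``$H^1$ of a cyclic group acting on the $k_w$-points of a connected group over a finite field vanishes'' is false in this setting. The argument you are missing is a simple counting one: since $L_w/K_v$ is ramified, cyclic of degree $p$ and tame, $K_v$ contains $\mu_p$, so $p\mid q_v-1$; on the other hand $\#\tilde{E}_{\ns}(k_v)=q_v+1$ for non-split multiplicative reduction, which is therefore coprime to $p$, forcing $\Hom(G,\tilde{E}_{\ns}(k_v))=0$. This is the whole content of the ramified case in the paper.

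Second, your statement that $E(L_w)/E_0(L_w)$ is cyclic of order $\ord_w(j(E))$ is the formula for \emph{split} multiplicative reduction. For non-split reduction the Frobenius acts on the geometric component group by inversion, and the group of components rational over the residue field has order at most $2$; since $p$ is odd, its Tate cohomology for $G$ is trivial for elementary reasons (killed by both $2$ and $p$). This single observation disposes of the component group and makes the entire Tate-curve/quadratic-twist detour unnecessary. As written, that detour is not a proof in any case: the assertion that ``every obstruction is $2$-primary'' is not derived from a cohomology computation of $G$ acting on the relevant module (you compute $2$-torsion of cohomology of $\Gal(K_v'/K_v)$, a different group), and the final fallback of ``reading off'' the norm index from known computations amounts to citing the statement being proved.
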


\begin{proof}
The exact sequence
$$
0 \longrightarrow E_1(L_w) \longrightarrow E_0(L_w) \longrightarrow \tilde{E}_{\ns}(k_w) \longrightarrow 0
$$
induces an exact sequence of cohomology groups
$$
H^1(G,E_1(L_w)) \longrightarrow H^1(G,E_0(L_w)) \longrightarrow H^1(G,\tilde{E}_\mathrm{ns}(k_w)) \longrightarrow H^2(G,E_1(L_w))
$$
and by the same argument as in Proposition \ref{good} we obtain
$$
H^1(G,E_0(L_w)) \simeq  H^1(G,\tilde{E}_\mathrm{ns}(k_w)).
$$

First suppose $v$ is inert. Then again using Theorem 2 of \cite{Lang} we obtain that $H^1(G,E_0(L_w))$ is trivial. Also, we have an exact sequence 
$$
0 \longrightarrow E_0(L_w) \longrightarrow E(L_w) \longrightarrow E(L_w)/E_0(L_w) \longrightarrow 0
$$
which induces the exact sequence
$$
H^1(G,E_0(L_w)) \longrightarrow H^1(G,E(L_w)) \longrightarrow H^1(G, E(L_w)/E_0(L_w)).
$$
As $E$ has non-split multplicative reduction at $v$, it follows that the group $E(L_w)/E_0(L_w)$ is of order at most $2$, hence all the terms in the above sequence are trivial.

Now suppose $v$ is ramified in $L$. Recall $q_v$ denotes the size of the residue field $k_v$, where $q_v$ is coprime to $p$. Then as $v$ is ramified, it follows that $p$ divides $q_v-1$. However $\tilde{E}_\mathrm{ns}(k_w)$ is a group if size $q_v+1$, so $H^1(G,\tilde{E}_\mathrm{ns}(k_w))$ must be trivial, hence by the same argument as above so is $W_{v,L}$.

\end{proof}


\subsubsection{Case when $v\mid p$}


We now wish to study the groups $W_{v,L}$ for primes $v$ which divide $p$. In this case the formal group $E_1(L_w)$ is not uniquely divisible by $p$, so its cohomology groups are not necessarily trivial. The situation depends on the reduction type of $E$ at $v$, and results from section 2 will be useful particularly when $E$ has good supersingular reduction at $v$. The case of ordinary reduction was studied by Mazur and Rubin in \cite{MazurRubin}. We say that $E$ has \emph{anomalous reduction} at $v$ if $\tilde{E}(k_v)[p]$ is non-trivial.

\begin{pro}\label{Ordinary}
Let $E$ have good ordinary reduction at $v$. Then

\begin{itemize}
\item[(i)] If $v$ is ramified in $L$, then
$$
\dim_{\mathbb{F}_p} W_{v,L}=
\begin{cases}
1\text{ or } 2 & \text{if $E$ has anomalous reduction at $v$} \\
0 & \text{otherwise}.
\end{cases}
$$

\item[(ii)] If $v$ is inert in $L$, then $W_{v,L}$ is trivial.
\end{itemize}

\end{pro}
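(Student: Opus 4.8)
The plan is to follow closely the strategy used for primes $v\nmid p$, replacing the vanishing of the formal group cohomology with the ordinary reduction analysis of Mazur--Rubin. As before, $W_{v,L}\simeq H^1(G,E(L_w))$ where $G=\Gal(L_w/K_v)\simeq\mathbb{Z}/p\mathbb{Z}$, and by Lemma \ref{herbW} this is also isomorphic to $H^2(G,E(L_w))\simeq E(K_v)/N_{L_w/K_v}E(L_w)$. Since $E$ has good reduction at $v$ we have $E_0(L_w)=E(L_w)$, so the exact sequence of Lemma \ref{Gseq} reads
$$
0\longrightarrow E_1(L_w)\longrightarrow E(L_w)\longrightarrow \tilde E(k_w)\longrightarrow 0.
$$
The point is that in the ordinary case the formal group $\widehat E=E_1$ has height $1$, so the norm map on $\widehat E(\mathfrak m_{L_w})$ need not be surjective; its cokernel is governed by whether $\tilde E(k_v)[p]\neq 0$, i.e.\ whether $v$ is anomalous. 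I would extract this from the work of Mazur--Rubin: in \cite{MazurRubin} they compute precisely $H^1(G,E(L_w))$ (equivalently the relevant local conditions) for $v\mid p$ of good ordinary reduction.

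For part (ii), $v$ inert, the idea is that $L_w/K_v$ is unramified of degree $p$. Then $E_1(L_w)=\widehat E(\mathfrak m_{L_w})$ and by standard results on norms in unramified extensions (the formal logarithm identifies $\widehat E(\mathfrak m_{L_w}^n)$ with $\mathfrak m_{L_w}^n$, an induced $G$-module, for $n$ large, and one handles the finite quotient separately) one gets $H^i(G,E_1(L_w))=0$; alternatively the Herbrand-quotient argument of Proposition \ref{herb} combined with surjectivity of the norm on units gives the vanishing directly. Hence $H^1(G,E(L_w))\simeq H^1(G,\tilde E(k_w))$, and $G$ acts on $\tilde E(k_w)$ through the Frobenius; invoking Lang's theorem (Theorem 2 of \cite{Lang}) as in Propositions \ref{good} and \ref{nonsplit} shows $H^1(G,\tilde E(k_w))=0$, so $W_{v,L}$ is trivial.

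For part (i), $v$ totally ramified in $L$, one has $k_w=k_v$ with trivial $G$-action, so $H^1(G,\tilde E(k_w))\simeq\Hom(G,\tilde E(k_v))\simeq\tilde E(k_v)[p]$, which is nonzero exactly when $v$ is anomalous, and then has dimension $1$ (since $\tilde E(k_v)[p]$ is cyclic for ordinary reduction) or $2$ (for $v\mid p$ with small residue field $\tilde E(k_v)[p]$ can be all of $\tilde E[p]$). From the long exact cohomology sequence attached to the displayed short exact sequence, $W_{v,L}\simeq H^1(G,E(L_w))$ surjects onto a subgroup of $H^1(G,\tilde E(k_w))$ with kernel a quotient of $H^1(G,E_1(L_w))$, and receives the image of $\coker$ of the norm on $E_1$; so $\dim_{\mathbb{F}_p}W_{v,L}$ is bounded above by $\dim\tilde E(k_v)[p]\le 2$, and is $0$ when $v$ is non-anomalous. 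The main obstacle is pinning down that it is exactly $1$ or $2$ (and not, say, larger or inadvertently zero) when $v$ is anomalous: this requires the height-$1$ norm computation, i.e.\ that $E_1(L_w)/N_{L_w/K_v}E_1(L_w)$ and $H^1(G,E_1(L_w))$ are controlled, which is exactly the content one imports from \cite{MazurRubin}. I would therefore structure the proof so that the inert case and the non-anomalous ramified case are self-contained (Lang + formal group vanishing), and cite Mazur--Rubin for the precise value $1$ or $2$ in the anomalous ramified case.
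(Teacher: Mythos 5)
Your route for part (ii) is exactly the paper's: the exact sequence $0\to E_1(L_w)\to E(L_w)\to\tilde E(k_w)\to 0$, surjectivity of the norm on the formal group in unramified extensions, and Lang's theorem. For part (i) the paper simply cites Proposition B.2 of Mazur--Rubin for the whole statement, anomalous or not, and in the end you do invoke the same reference; so the overall strategy agrees. The issue is with your intermediate accounting of where the content lies.

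Your claim that the long exact sequence alone gives $\dim_{\mathbb{F}_p}W_{v,L}\leqslant\dim_{\mathbb{F}_p}\tilde E(k_v)[p]$, and hence that the non-anomalous ramified case is ``self-contained (Lang + formal group vanishing),'' does not hold. First, since the reduction map $E(K_v)\to\tilde E(k_v)$ is surjective, the sequence gives an \emph{injection} $H^1(G,E_1(L_w))\hookrightarrow H^1(G,E(L_w))=W_{v,L}$, so $W_{v,L}$ is an extension of a subgroup of $H^1(G,\tilde E(k_v))\simeq\tilde E(k_v)[p]$ by all of $H^1(G,E_1(L_w))$; your bound presupposes $H^1(G,E_1(L_w))=0$. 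For $v\mid p$ and $L_w/K_v$ totally ramified this vanishing is not formal: the formal group has height $1$, so Proposition \ref{SurjectiveNorm} does not apply, Lang's theorem is irrelevant (the residue extension is trivial), and the analogous statement already fails for $\widehat{\mathbb{G}}_m$, where local class field theory gives $U^{(1)}_{K_v}/N_{L_w/K_v}U^{(1)}_{L_w}\simeq\mathbb{Z}/p\mathbb{Z}$ in a totally ramified degree-$p$ extension. Second, for good \emph{ordinary} reduction one has $\tilde E[p](\overline{k_v})\simeq\mathbb{Z}/p\mathbb{Z}$, so $\dim_{\mathbb{F}_p}\tilde E(k_v)[p]\leqslant 1$; in particular the value $2$ in the proposition cannot be explained by the reduction alone and necessarily records a nontrivial contribution from $H^1(G,E_1(L_w))$. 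That this formal-group cohomology vanishes exactly when $\tilde E(k_v)[p]=0$ is an honest piece of the ordinary-reduction analysis (via the relation between $\widehat E[p]$ and $\tilde E[p]$), and it is part of what Proposition B.2 of Mazur--Rubin supplies. So you should cite that result for the non-anomalous ramified case as well, as the paper does; with that correction the argument is complete.
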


\begin{proof}
Part (i) follows immediately from Proposition B.2 in \cite{MazurRubin}. For part (ii), note that by the same argument as above we have an exact sequence of cohomology groups
$$
H^1(G,E_1(L_w)) \longrightarrow H^1(G,E(L_w)) \longrightarrow H^1(G,\tilde{E}(k_w)) \longrightarrow H^2(G,E_1(L_w)).
$$
where $w$ is a prime of $L$ above $v$. Denote by $\widehat E$ the formal group associated to the elliptic curve $E$. Now since $v$ is unramified in $L$, the norm map on $\widehat E$ is surjective and hence its cohomology is trivial. Finally, again using Theorem 2 of \cite{Lang} we have that $H^1(G,\tilde{E}(k_w))$ is trivial and therefore the same thing is true of $W_{v,L}$.

\end{proof}

\begin{pro}\label{Supersingular}
Suppose $E$ has good supersingular reduction at $v$.
\begin{itemize}
\item[(i)] Suppose $v$ is ramified in $L$. Let $w$ be the place of $L$ above $v$, and let $\pi_L$ be a uniformizer for the completion $L_w$. Set $t=\ord_w(\sigma(\pi_L)-\pi_L)-1$. Then 
$$
\dim_{\mathbb{F}_p} W_{v,L}=0 \Longleftrightarrow t=1
$$

Moreover, for $t\geqslant 2$ we have 
$$
f_K \leqslant \dim_{\mathbb{F}_p} W_{v,L} \leqslant \min\{f_K(t-1), [K:\mathbb{Q}_p]+2\}.
$$

\item[(ii)] If $v$ is inert in $L$, then $W_{v,L}$ is trivial.
\end{itemize}
\end{pro}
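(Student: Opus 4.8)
The plan is to reduce the whole statement to the analysis of the norm map on the supersingular formal group carried out in Section 3. First I would use Lemma \ref{herbW} to identify $W_{v,L} \simeq H^2(G, E(L_w))$, where $G = \Gal(L_w/K_v) \simeq \mathbb{Z}/p\mathbb{Z}$. Since $E$ has good reduction at $v$ we have $E_0(L_w) = E(L_w)$, so Lemma \ref{Gseq} gives a short exact sequence of $G$-modules $0 \to E_1(L_w) \to E(L_w) \to \tilde{E}(k_w) \to 0$. Because $E$ has supersingular reduction, $\tilde{E}$ has no geometric $p$-torsion, hence $p \nmid \#\tilde{E}(k_w)$ and so $H^i(G, \tilde{E}(k_w)) = 0$ for every $i \geqslant 1$ (one could also invoke Lang's theorem here, exactly as in Proposition \ref{Ordinary}); the long exact cohomology sequence then yields an isomorphism $H^2(G, E_1(L_w)) \xrightarrow{\sim} W_{v,L}$. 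Finally, using $E_1(L_w) \simeq \widehat{E}(\mathfrak{m}_{L_w})$ together with the fact that $G$ is cyclic, this becomes $W_{v,L} \simeq \widehat{E}(\mathfrak{m}_{K_v}) / N^{\widehat{E}}_{L_w/K_v}(\widehat{E}(\mathfrak{m}_{L_w})) = H^2(G, \widehat{E}(\mathfrak{m}_{L_w}))$.

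For part (i), when $v$ is ramified in $L$ the completion $L_w/K_v$ is cyclic, totally ramified of degree $p$, and supersingularity of $E$ means the formal group $\widehat{E}$ over $\mathcal{O}_{K_v}$ has height $h = 2 > 1$; moreover the integer $t = \ord_w(\sigma(\pi_L) - \pi_L) - 1$ is precisely the integer $t$ attached to $L_w/K_v$ in Section 3.3. I would then simply invoke Proposition \ref{SurjectiveNorm}, which gives $W_{v,L} = 0$ if and only if $t = 1$, and, for $t \geqslant 2$, the bounds $f_{K_v} \leqslant \dim_{\mathbb{F}_p} W_{v,L} \leqslant \min\{ f_{K_v}(t-1),\ [K_v:\mathbb{Q}_p] + 2 \}$ upon substituting $h = 2$. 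Since $f_{K_v}$ is the inertia degree of $v$ in $K/\mathbb{Q}$, which the statement denotes $f_K$, this is exactly the claim.

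For part (ii), when $v$ is inert the extension $L_w/K_v$ is unramified of degree $p$, and then the norm map $N^{\widehat{E}}_{L_w/K_v} \colon \widehat{E}(\mathfrak{m}_{L_w}) \to \widehat{E}(\mathfrak{m}_{K_v})$ is surjective --- the same standard fact used in the proof of Proposition \ref{Ordinary}(ii), which can be checked by a filtration argument in the style of Lemma \ref{Filtration} using that the trace is surjective from $\mathfrak{m}_{L_w}^n$ onto $\mathfrak{m}_{K_v}^n$ in an unramified extension. Hence $H^2(G, \widehat{E}(\mathfrak{m}_{L_w})) = 0$, and by the reduction above $W_{v,L}$ is trivial.

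I do not expect a serious obstacle here: the real work --- the surjectivity criterion and the dimension bounds --- was already done in Section 3. The only points requiring care are bookkeeping: verifying that the integer $t$ in the statement matches the one in Proposition \ref{SurjectiveNorm}, that $E_1(L_w)$ is genuinely $\widehat{E}$ base-changed from $\mathcal{O}_{K_v}$ with $L_w/K_v$ totally ramified (as that proposition demands), and that supersingular reduction forces $p \nmid \#\tilde{E}(k_w)$ so that $\tilde{E}(k_w)$ contributes nothing to the $p$-part of the cohomology.
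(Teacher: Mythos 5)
Your proof is correct and follows essentially the same route as the paper: both reduce $W_{v,L}$ to the cohomology of the formal group via the exact sequence $0 \to E_1(L_w) \to E(L_w) \to \tilde{E}(k_w) \to 0$ and the vanishing of $H^i(G,\tilde{E}(k_w))$ forced by supersingularity, and then invoke Proposition \ref{SurjectiveNorm} with $h=2$ (resp.\ surjectivity of the norm in the unramified case). The only cosmetic difference is that you work with $H^2$ throughout via Lemma \ref{herbW}, while the paper works with $H^1$ and passes to the norm cokernel using Proposition \ref{herb}; these are interchangeable by that proposition.
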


\begin{proof}
Consider again the exact sequence of cohomology groups
$$
E(K_v) \longrightarrow \tilde{E}(k_v) \longrightarrow H^1(G,E_1(L_w)) \longrightarrow H^1(G,E(L_w)) \longrightarrow H^1(G,\tilde{E}(k_w)).
$$
As $E$ has supersingular reduction at $v$ we have $p \nmid |E(k_w)|$ and so $H^1(G,\tilde{E}(k_w))=0$. Also, the map $E(K_v) \rightarrow \tilde{E}(k_v)$ is simply the reduction map, hence it is surjective. These together imply that we have an isomorphism 
$$
H^1(G,E_1(L_w)) \simeq H^1(G,E(L_w))=W_{v,L}.
$$
For part (i) $\widehat{E}$ has height $2$ as $E$ has supersingular reduction at $v$, hence the result follows from propositions \ref{SurjectiveNorm} and \ref{herb}. Part (ii) follows from the same reasoning as Proposition \ref{Ordinary}.
\end{proof}

\begin{proof}[Proof of Theorem \ref{MainThm2}]
By Proposition \ref{mainPro},
\begin{equation*}
\dim_{\mathbb{F}_p} \Sel_p(E/L)^{\Gal(L/K)} = \sum_{v \in S} \dim_{\mathbb{F}_p} W_{v,L}.
\end{equation*}
If we let $\delta_v = \dim_{\mathbb{F}_p} W_{v,L}$ then the theorem then follows from propositions \ref{good}, \ref{split}, \ref{nonsplit}, \ref{Ordinary} and \ref{Supersingular} which compute the $\mathbb{F}_p$-dimensions of $W_{v,L}$ for the different primes in $S$.
\end{proof}

\begin{proof}[Proof of Corollary \ref{MainCor}]
 By Lemma 1 in (\cite{SerreLF}, \S IX) we have that $\Sel_p(E/L)$ is trivial if and only if $\Sel_p(E/L)^{\Gal(L/K)}$ is trivial, and the result follows.
\end{proof}

\section{Extensions of the form $K(\sqrt[p]{m})$}

In this section we will finish the proof of Theorem \ref{MainThm1}. Let then $E$ be an elliptic curve over $\mathbb{Q}$. Consider the extension of $E$ to $K=\mathbb{Q}(\zeta_p)$ and let $L_m=K(\sqrt[p]{m})$. For a fixed $m$, if $v$ is a prime of $K$ then again we let $\delta_v=\dim_{\mathbb{F}_p} W_{v,L_m}$. We also denote by $\ell_v$ the prime of $\mathbb{Q}$ below $v$, and set $q_v$ to be the size of the residue field of $K_v$. That is, if $f_v$ is the order of $\ell_v \pmod{p}$, then $q_v=\ell_v^{f_v}$.

Note that most of the statement follows directly from Theorem \ref{MainThm2},  and the only statements which need to be modified are those for primes $v \nmid p$ of split multiplicative reduction ramified in $L_m$, and primes $v|p$ of good supersingular reduction ramified in $L_m$. This is done in the two propositions below.  To deal with the split multiplicative reduction case recall the notation introduced in section 1. Let $j$ denote the $j$-invariant of $E$ and let $v \nmid p$ be a prime of $K$ at which $E$ has split multiplicative reduction. Note that since $\ell_v$ is unramified in $K$, it follows that $E$ has semistable reduction at $\ell_v$. For each positive integer $m$ let $s\geqslant 0$ be such that $m=\ell^s d$ where $d$ is coprime to $\ell$. Since $E$ is semistable at $\ell_v$ we may write $j=\ell_v^{-n}\frac{a}{b}$ with $n$ a positive integer and $a$ and $b$ coprime to $\ell_v$. Then we set
\begin{equation}\label{ellunit2}
u_{\ell_v,m} :=d^n\bigg (\frac{a}{b} \bigg)^s.
\end{equation}

\begin{pro}\label{split2}
Let $v$ be a prime of $K$ such that $E$ has split multiplicative reduction at $v$. Then
$$
\delta_v = \begin{cases}
1 & \text{if $u_{\ell_v,m}^{\frac{q_v-1}{p}} \equiv 1 \mod \ell_v$}\\
0 & \text{otherwise}
\end{cases}
$$
\end{pro}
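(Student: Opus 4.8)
The plan is to reduce Proposition~\ref{split2} to the already-proved Proposition~\ref{split}(i) by carefully tracking how the local data at $v$ relates to the global integer $m$. Since $E$ has split multiplicative reduction at $v$ and $v\nmid p$, Proposition~\ref{split}(i) gives $\delta_v=1$ exactly when $u_v^{(q_v-1)/p}\equiv 1\pmod{\pi_v}$, where $u_v\in\mathcal{O}_v^\times$ is the unit with $j(E/K_v)=\pi_v^{-m_0}u_v$ for $-m_0=\ord_v(j(E))$, and where $\pi_v$ is chosen so that $L_w=K_v(\sqrt[p]{\pi_v})$. So the whole content is to identify the image of $u_v$ in the residue field $k_v$ (equivalently, its class modulo $p$-th powers in $k_v^\times$, since we are raising to the power $(q_v-1)/p$) with the image of the explicitly defined $u_{\ell_v,m}$.

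First I would pin down the uniformiser: the completion $L_w/K_v$ is the degree-$p$ subextension of $K_v(\sqrt[p]{m})/K_v$, and since $E$ has multiplicative reduction, $\ell_v\nmid m$ would force $v$ unramified, so the ramified case is $\ell_v\mid m$, say $\ell_v^s\|m$ with $\gcd(s,p)=1$ (if $p\mid s$ then $v$ is again unramified and $\delta_v=0$ trivially, consistent with both formulas since then $\sqrt[p]m$ adjusts by a $p$-th power). Thus $L_w=K_v(\sqrt[p]{\ell_v^s\,d})$ for a unit-valued $d$, and since $\gcd(s,p)=1$ we may take the uniformiser to be $\pi_v=\ell_v^{s}d\cdot(\text{$p$-th power adjustment})$; more cleanly, $\pi_v$ and $\ell_v^{s}d$ differ by a $p$-th power in $K_v^\times$, so $\ell_v$ itself is a norm-relevant uniformiser up to the unit $d$. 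Then I would write $j=\ell_v^{-n}(a/b)$ as in the statement (valid because $E$ is semistable, indeed split multiplicative, at $\ell_v$, so $n=m_0>0$), plug into $j=\pi_v^{-m_0}u_v$, and solve for $u_v$ modulo $p$-th powers and modulo $\pi_v$. Matching the power of $\ell_v$ forces $m_0=n$ up to the $s$-factor, and the unit part works out to $u_v\equiv d^{n}(a/b)^{s}\pmod{\pi_v}$ up to $p$-th powers in $k_v^\times$ — which is exactly $u_{\ell_v,m}$ by \eqref{ellunit2}. Since raising to $(q_v-1)/p$ annihilates $p$-th powers in $k_v^\times\cong\mathbb{Z}/(q_v-1)$, the congruence $u_v^{(q_v-1)/p}\equiv 1\pmod{\pi_v}$ is equivalent to $u_{\ell_v,m}^{(q_v-1)/p}\equiv 1\pmod{\ell_v}$, and we are done.

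The main obstacle I anticipate is bookkeeping the exponents correctly: one must be careful that $\ord_v$ on $K_v$ agrees with $\ord_{\ell_v}$ on $\mathbb{Q}$ (true since $\ell_v$ is unramified in $K=\mathbb{Q}(\zeta_p)$, as $v\nmid p$), that the Tate-curve series only contributes higher-order terms not affecting the residue-field class of $u_v^{-1}$ (already handled in the proof of Proposition~\ref{split}, where $q=\pi_v^{m}s_v$ with $s_v\equiv u_v^{-1}$), and that the two possible choices of degree-$p$ subfield (when $p\mid s$ versus $p\nmid s$) are handled, so that the inert-case subtlety of Proposition~\ref{split}(ii) never intrudes here. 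A secondary point is checking that the condition $u_{\ell_v,m}^{(q_v-1)/p}\equiv 1\pmod{\ell_v}$ is well-defined independently of the choices of $a,b$ and of representative of $d$ modulo $p$-th powers — but this is immediate since any such ambiguity changes $u_{\ell_v,m}$ by a $p$-th power in $k_v^\times$, which the exponent $(q_v-1)/p$ kills.
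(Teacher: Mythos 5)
Your proposal is essentially the paper's argument in a different packaging: the paper evaluates the tame Hilbert symbol $(m,q)_v$ directly via the corollary to Proposition 8 of \cite{SerreLF} \S XIV, using $q=\ell_v^{n}s_v$ with $s_v\equiv b/a \pmod{\ell_v}$ and $m=\ell_v^{s}d$, which gives $(m,q)_v=\big(d^n(a/b)^s\big)^{(q_v-1)/p}=u_{\ell_v,m}^{(q_v-1)/p}$ up to the sign $(-1)^{ns}$, which is harmless since $-1$ is a $p$-th power for $p$ odd; you instead first normalize the uniformiser so as to quote Proposition \ref{split}(i) verbatim. Both routes reduce to the same tame-symbol computation, so the strategy is sound.

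One bookkeeping slip should be corrected. When $\ell_v^s\| m$ with $p\nmid s$, the element $\ell_v^{s}d$ has valuation $s$, so it is \emph{not} a uniformiser times a $p$-th power unless $s\equiv 1\pmod p$; you must take $\pi_v=(\ell_v^{s}d)^{s'}$ with $ss'\equiv 1\pmod p$ (adjusted by a $p$-th power), which yields $u_v=d^{ns'}(a/b)$ rather than your claimed $u_v\equiv d^{n}(a/b)^{s}$. The correct relation is $u_v^{s}\equiv u_{\ell_v,m}$ modulo $p$-th powers in $k_v^\times$, and since $\gcd(s,p)=1$ the conditions $u_v^{(q_v-1)/p}\equiv 1$ and $u_{\ell_v,m}^{(q_v-1)/p}\equiv 1$ are still equivalent, so the conclusion survives; but as written the intermediate identification is false. (Your parenthetical that $\delta_v=0$ ``trivially'' when $p\mid s$ is also inaccurate --- an inert $v$ can have $\delta_v=1$ when $p\mid c_v$ --- but this lies outside the scope of the proposition, which concerns only ramified $v$.)
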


\begin{proof}
We have that $\delta_v = 1$ if and only if the tame Hilbert symbol $(m,q)_v$ is equal to 1. We argue as in the proof of part (i) of Proposition \ref{split}. We have that
\begin{align*}
q &= g\bigg(\frac{1}{j}\bigg) \\
   &=  g\bigg(\ell_v^n \frac{b}{a}\bigg)\\
   &= \ell_v^n s_v
\end{align*}
where $s_v \equiv \frac{b}{a} \pmod{\ell_v}$. Now again a calculation using the corollary to Proposition 8 in (\cite{SerreLF}, \S XIV) gives the desired result.
\end{proof}

Now consider $v$ a prime of $K$ dividing $p$, and again we let $\delta_v=\dim_{\mathbb{F}_p} W_{v,L_m}$ which as we have seen is equal to the $\mathbb{F}_p$-dimension of $H^1(G,E_1(L_w))$, where $E_1$ can be identified with the formal group of $E$. The only thing left to show is that when considering extensions of the form $K(\sqrt[p]{m})$, then $\delta_v=p-2$ if $p \mid m$, and equals $0$ otherwise. This is shown in the following proposition, which is done for general formal groups of height $h>1$.

\begin{pro}\label{Kmextensions}
Let $\mathcal{F}$ be a formal group over $\mathbb{Q}_p$ of height $h>1$. Let $K=\mathbb{Q}_p(\zeta_p)$ and $L_m=K(\sqrt[p]{m})$. Then
$$
\dim_{\mathbb{F}_p} \mathcal{F}(\mathfrak{m}_K)/N^{\mathcal{F}}_{L_m/K}(\mathcal{F}(\mathfrak{m}_{L_m}))=
\begin{cases}
p-2 & \text{if $p\mid m$} \\
0 & \text{otherwise}
\end{cases}
$$
\end{pro}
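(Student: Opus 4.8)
The plan is to apply Proposition \ref{SurjectiveNorm}, which already computes $\dim_{\mathbb{F}_p}\mathcal{F}(\mathfrak{m}_K)/N^{\mathcal{F}}_{L/K}(\mathcal{F}(\mathfrak{m}_L))$ for a totally ramified cyclic degree $p$ extension in terms of the ramification invariant $t=v_L(\sigma(\pi_L)-\pi_L)-1$. So everything reduces to two things: (a) deciding when $L_m/K$ is totally ramified, and (b) computing $t$ in that case. First I would dispose of the unramified case: if $p\nmid m$ then $m$ is a unit in $\mathbb{Z}_p$, and since $K=\mathbb{Q}_p(\zeta_p)$ already contains the $p$th roots of unity and $\mathbb{Z}_p^\times/(\mathbb{Z}_p^\times)^p$ is generated by a Teichm\"uller lift times units congruent to $1$, one checks that $\sqrt[p]{m}\in K$ or that $L_m/K$ is unramified of degree $p$; either way the norm map is surjective (for unramified extensions the norm on a formal group is surjective, as used in the proof of Proposition \ref{Ordinary}), giving $\delta_v=0$. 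Actually the cleanest statement: $K(\sqrt[p]{m})/K$ is ramified if and only if $p\mid v_p(m)$ fails, i.e. precisely when $p\mid m$ (for $m$ $p$th-power-free); I would phrase this via the fact that $K$ has a unique degree $p$ unramified extension and $K(\sqrt[p]{u})$ for $u$ a unit is contained in it.

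**The ramified case.** When $p\mid m$, write $m=p\cdot(\text{unit})$ up to $p$th powers, so $L_m=K(\sqrt[p]{p})$ after absorbing the unit (again using that units are $p$th powers times elements of the unramified part — I'd want to be a little careful here, but the key point is that $L_m=K(\pi)$ with $\pi^p=$ a uniformizer of $K$ times a unit). Then $L_m/K$ is totally ramified of degree $p$ with uniformizer $\pi_{L}=\sqrt[p]{p}$ (or $\sqrt[p]{\pi_K}$ where $\pi_K=\zeta_p-1$). The core computation is $t=v_{L_m}(\sigma(\pi_L)-\pi_L)-1$ where $\sigma(\pi_L)=\zeta_p\pi_L$ (choosing the generator so it acts this way on the $p$th root). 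Then $\sigma(\pi_L)-\pi_L=(\zeta_p-1)\pi_L$, and $v_{L_m}(\zeta_p-1)=p-1$ since $\zeta_p-1$ is a uniformizer of $K$ and $e(L_m/K)=p$, while $v_{L_m}(\pi_L)=1$. Hence $v_{L_m}(\sigma(\pi_L)-\pi_L)=p$, giving $t=p-1$.

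**Conclusion and the main obstacle.** With $t=p-1\geq 2$ (since $p\geq 3$), Proposition \ref{SurjectiveNorm} gives bounds $f_K\leq \delta_v\leq\min\{f_K(t-1),[K:\mathbb{Q}_p]+h\}$; here $K/\mathbb{Q}_p=\mathbb{Q}_p(\zeta_p)/\mathbb{Q}_p$ is totally ramified of degree $p-1$, so $f_K=1$ and $[K:\mathbb{Q}_p]=p-1$, giving $1\leq\delta_v\leq\min\{p-2,\,p-1+h\}=p-2$. This pins down the upper bound as $p-2$ but only gives $\delta_v\geq 1$ from the general proposition, so the real work is the matching lower bound $\delta_v\geq p-2$. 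The main obstacle will be showing $N^{\mathcal{F}}_{L_m/K}(\mathcal{F}(\mathfrak{m}_{L_m}))\subseteq\mathcal{F}(\mathfrak{m}_K^{p-1})=\mathcal{F}(\mathfrak{m}_K^{t})$, i.e. that the norm image is as small as possible. For this I would feed the precise ramification data into Corollary \ref{NormCor}: for $x\in\mathcal{F}(\mathfrak{m}_{L_m})$, $N_{\mathcal{F}}(x)\equiv \Tr_{L_m/K}(x)+\sum a_i N_{L_m/K}(x)^i$ modulo $\Tr_{L_m/K}(x^2\mathcal{O}_{L_m})$, and since $h>1$ the lowest $a_i$ with $v_K(a_i)=0$ is $i=p^{h-1}\geq p$, so those terms lie very deep; meanwhile by Lemma \ref{TraceLemma} with $m_{\text{diff}}=(t+1)(p-1)=p(p-1)$ one gets $\Tr_{L_m/K}(\mathfrak{m}_{L_m})=\mathfrak{m}_K^{\lfloor (p(p-1)+1)/p\rfloor}=\mathfrak{m}_K^{p-1}$. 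Hence every norm lands in $\mathcal{F}(\mathfrak{m}_K^{p-1})$, and combined with $N^{\mathcal{F}}_{L_m/K}(\mathcal{F}(\mathfrak{m}_{L_m}))\supseteq\mathcal{F}(\mathfrak{m}_K^t)=\mathcal{F}(\mathfrak{m}_K^{p-1})$ from the claim in Proposition \ref{SurjectiveNorm}, we get exact equality $N^{\mathcal{F}}_{L_m/K}(\mathcal{F}(\mathfrak{m}_{L_m}))=\mathcal{F}(\mathfrak{m}_K^{p-1})$, and therefore $\delta_v=\dim_{\mathbb{F}_p}\mathcal{F}(\mathfrak{m}_K)/\mathcal{F}(\mathfrak{m}_K^{p-1})=f_K(p-2)=p-2$. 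The delicate points to verify carefully are the normalization of the $\sigma$-action on $\sqrt[p]{p}$, the reduction of a general unit times $p$ to the standard uniformizer without changing $t$, and checking the higher $\Tr_{L_m/K}(x^2\mathcal{O}_{L_m})$ and $N_{L_m/K}(x)^i$ terms do not escape $\mathfrak{m}_K^{p-1}$ — all of which are short valuation estimates once the ramification filtration is in hand.
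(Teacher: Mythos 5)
Your overall strategy is exactly the paper's: determine the ramification break $t$ of $L_m/K$, get the containment $N^{\mathcal{F}}_{L_m/K}(\mathcal{F}(\mathfrak{m}_{L_m}))\supseteq\mathcal{F}(\mathfrak{m}_K^{t})$ from the claim inside Proposition \ref{SurjectiveNorm}, and get the reverse containment from Corollary \ref{NormCor}, Lemma \ref{TraceLemma}, and the fact that the non-special coefficients $a_i$ lie in $p\mathbb{Z}_p$ and hence have $v_K(a_i)\geqslant p-1$. But two of your intermediate claims are false. First, for $p\nmid m$ the extension $L_m/K$ is in general \emph{not} unramified: writing $m$ as a Teichm\"uller unit (a $p$th power) times $\langle m\rangle\equiv 1\bmod p$, one only gets $\langle m\rangle\in 1+\mathfrak{m}_K^{p-1}$, and typically $\langle m\rangle$ is not a $p$th power times an element of $1+\mathfrak{m}_K^{p}$; e.g.\ $\mathbb{Q}_3(\zeta_3)(\sqrt[3]{2})/\mathbb{Q}_3(\zeta_3)$ is totally ramified with break $t=1$. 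The conclusion $\delta_v=0$ survives, but only because Proposition \ref{SurjectiveNorm} gives surjectivity of the norm for $t\leqslant 1$ when $h>1$ --- which is how the paper argues --- not because the extension is unramified.

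Second, and more seriously, your computation of $t$ in the ramified case contains an arithmetic slip that changes the answer. Since $\zeta_p-1$ is a uniformiser of $K$ and $e(L_m/K)=p$, one has $v_{L_m}(\zeta_p-1)=p$, not $p-1$; moreover $v_{L_m}(\sqrt[p]{p})=p-1$, so $\sqrt[p]{p}$ is not a uniformiser of $L_m$ (one may take $\pi_{L_m}=(\zeta_p-1)/\sqrt[p]{p}$, which for $p=3$ satisfies $\pi_{L_m}^3=-(\zeta_3-1)$). Redoing the computation, $\sigma(\pi_{L_m})-\pi_{L_m}$ is a root of unity minus one times $\pi_{L_m}$ and has valuation $p+1$, so $t=p$, not $p-1$; equivalently, for $v_K(\alpha)$ coprime to $p$ the break of $K(\sqrt[p]{\alpha})/K$ is $p\,v_K(p)/(p-1)=p$, and for $p=3$ the Eisenstein polynomial $X^3+(\zeta_3-1)$ gives $\ord_{L}(\mathfrak{D}_{L/K})=8=(t+1)(p-1)$, i.e.\ $t=3$. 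Feeding $t=p$ into your own argument yields $\Tr_{L_m/K}(\mathfrak{m}_{L_m})=\mathfrak{m}_K^{\lfloor((p+1)(p-1)+1)/p\rfloor}=\mathfrak{m}_K^{p}$, hence $N^{\mathcal{F}}_{L_m/K}(\mathcal{F}(\mathfrak{m}_{L_m}))=\mathcal{F}(\mathfrak{m}_K^{p})$ and $\delta_v=p-1$ rather than $p-2$. So as written your two slips conspire to reproduce the displayed formula; the paper asserts $t_m=p-1$ without proof, and you must confront this discrepancy directly --- either produce a correct argument that the break is $p-1$ (I do not see one) or conclude that the value $p-2$ in the statement needs to be revisited --- before the proof can be considered complete.
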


\begin{proof}
Let $t_m$ be the largest for which the higher ramification group of $L_m/K$ is non-trivial. Then one can show that $t_m=p-1$ when $p \mid m$ and $t_m \leqslant 1$ otherwise. The rest of the proof is a very slight modification of that of Proposition \ref{SurjectiveNorm}. Indeed, when $p \mid m$ then $t_m=p-1$ and so it follows by Proposition \ref{SurjectiveNorm} that $\mathcal{F}(\mathfrak{m}_{L_m}^{p-1})$ surjects via the norm onto $\mathcal{F}(\mathfrak{m}_K^{p-1})$. Recall from Corollary \ref{NormCor} that for $x\in\mathcal{F}(\mathfrak{m}_{L_m})$ we have
$$
N^{\mathcal{F}}_{L_m/K}(x) \equiv \Tr_{L_m/K}(x)+\sum_{i=1}^\infty a_i N_{L/K}(x)^i \mod \Tr_{L_m/K}(x^2\mathcal{O}_{L_m}).
$$
Also, since $h>1$ then $v_K(a_i)\geqslant v_K(p)=p-1$ unless $i=kp^{h-1}$, since $\mathcal{F}$ is defined over $\mathbb{Q}_p$. But by Lemma \ref{TraceLemma} we have that $\Tr_{L_m/K}(\mathfrak{m}_{L_m})=\mathfrak{m}_K^{p-1}$ and so it follows that $N^{\mathcal{F}}_{L_m/K}(\mathcal{F}(\mathfrak{m}_{L_m})) \subset \mathcal{F}(\mathfrak{m}_K^{p-1})$. As we have already seen that the reverse inclusion holds, we conclude that 
$$
N^{\mathcal{F}}_{L_m/K}(\mathcal{F}(\mathfrak{m}_{L_m})) = \mathcal{F}(\mathfrak{m}_K^{p-1}),
$$
as desired. Finally, when $p$ does not divide $m$ we have that $t_m \leqslant 1$ and by Proposition \ref{SurjectiveNorm} we have that 
$$
N^{\mathcal{F}}_{L_m/K}(\mathcal{F}(\mathfrak{m}_{L_m})) = \mathcal{F}(\mathfrak{m}_K).
$$
This concludes the proof.
\end{proof}

\begin{proof}[Proof of Theorem \ref{MainThm1}]
This follows immediately from Theorem \ref{MainThm2} as well as propositions \ref{split2} and \ref{Kmextensions}.
\end{proof}

\nocite{Sil}
\nocite{Lang}
\nocite{SerreLF}
\nocite{GCEC}
\nocite{Gr}
\nocite{MatsSha}
\nocite{Haze}
\nocite{Maz}
\nocite{CG}
\nocite{TD}
\nocite{DDIwa}
\nocite{HM}
\nocite{MazurRubinHilbert}
\nocite{Newton}
\nocite{CasselsSelmer}
\nocite{Fisher}
\nocite{Kloost}
\nocite{KloostSchaef}
\nocite{Kramer}
\nocite{Matsuno2}
\nocite{Kramer2}
\nocite{Bartel}
\nocite{Bartel2}
\nocite{Cesna}
\nocite{magma}

\bibliographystyle{alpha}
\bibliography{Selm.bib}{}

\end{document}